\numberwithin{equation}{section}
\def\dd{{\mathrm{d}}}
\newcommand{\R}{\mathbb{R}}
\newcommand{\C}{\mathbb{C}}
\newcommand{\Z}{\mathbb{Z}}
\DeclareMathOperator{\diam}{diam}
\newcommand{\defeq}{\mathrel{\mathop:}=}
\newcommand{\inn}{{\quad\hbox{in } }}
\newcommand{\ve}{\varepsilon}
\newcommand{\be}{\begin{equation}}
\newcommand{\ee}{\end{equation}}
\newtheorem{lemma}{Lemma}[section]
\newtheorem{prop}{Proposition}[section]
\newtheorem{theorem}{Theorem}
\newtheorem{remark}{Remark}[section]
\pgfplotsset{compat=1.18} 
\title[Helical Vortex Filaments with compactly supported cross-sectional vorticity]{Helical Vortex Filaments with compactly supported cross-sectional vorticity for the Incompressible Euler Equations in $\R^3$}
\author[A.~Averkiou]{Averkios Averkiou}
\address{\noindent A.~Averkiou: Department of Mathematical Sciences University of Bath, Bath BA2 7AY,
United Kingdom.}
\email{aa4119@bath.ac.uk}
\author[M.~Musso]{Monica Musso}
\address{\noindent M.~Musso: Department of Mathematical Sciences University of Bath, Bath BA2 7AY,
United Kingdom.}
\email{m.musso@bath.ac.uk}
\begin{document}
\begin{abstract}
 We revisit the vortex filament conjecture for three-dimensional inviscid and incompressible Euler flows with helical symmetry and no swirl. Using gluing arguments, we provide the first construction of a smooth helical vortex filament in the whole space $\R^3$ whose cross-sectional vorticity is compactly supported in $\R^2$ for all times. The construction extends to a multi-vortex solution comprising several helical filaments arranged along a regular polygon. Our approach yields fine asymptotics for the vorticity cores, thus improving related variational results for smooth solutions in bounded helical domains and infinite pipes, as well as non-smooth vortex patches in the whole space.
\end{abstract}
\maketitle
\section{Introduction}
We consider the incompressible Euler equations in $\R^3$, which govern the evolution of an incompressible and inviscid fluid with constant density. Given a smooth initial velocity field $u_0(x)$, the Cauchy problem for the Euler system in vorticity formulation reads 
\begin{equation}
\label{euler}
\begin{aligned}
\vec \omega_t  +
(\mathbf{u}\cdot \nabla){\vec \omega}
&=( \vec \omega \cdot \nabla)\mathbf {u}  &&  \inn \, \R^3\times (0,T), \\
\quad \mathbf{u}  = \nabla \times \vec {\psi},\ &
-\Delta \vec \psi =  \vec \omega  && \inn \, \R^3\times (0,T),\\
{\vec \omega}(\cdot,0)  &=  \nabla \times u_0 && \inn \, \R^3,
\end{aligned}
\end{equation}
where $\textbf{u} : \R^3\times [0,T) \to \R^3$ represents the velocity field,\, $\vec{\omega}=\nabla\times{\mathbf{u}}: \R^3\times [0,T) \to \R^3$ denotes the vorticity field, and $\vec{\psi}: \R^3 \times [0,T) \to \R^3$ is the corresponding stream function of the fluid.

We study smooth solutions of \eqref{euler} with highly concentrated vorticity, particularly those known as \emph{vortex filaments}, corresponding to solutions whose vorticity is sharply concentrated in a thin tube around a smooth evolving curve in $\R^3$. The rigorous analysis of such flows is challenging and dates back to the works of Helmholtz \cite{helmholtz1858integrale} and Kelvin. 

In their classical works \cites{da1906sul,levi1908sull}, Da Rios and Levi-Civita used potential theory to formally show that if the vorticity concentrates in a tube of radius $\ve>0$ around a curve $\mathcal{G}(t)$, then its motion follows a binormal curvature flow with speed of order $|\log\ve|$. Specifically, for an arclength parametrization $\gamma(s,t)$, they derived
\[\partial_{t}\gamma=2\bar{c}|\log\ve|\left(\gamma_{s}\times\gamma_{ss}\right)\ \quad \text{as}\quad \ve\to 0,\]
where $\bar c$ is a circulation constant. Rescaling time via $t = \frac{\tau}{|\log \ve|}$ yields the binormal flow
\begin{equation}\label{binormal}
\partial_{\tau}\gamma=2\bar{c}\kappa\mathbf{B}_{{\mathcal{G}(\tau)}},
\end{equation}
where $\kappa$ is the curvature and $\mathbf{B}_{\mathcal{G}(\tau)}$ the binormal unit vector. This result identifies \eqref{binormal} as the effective dynamics of a vortex filament, in analogy with the Helmholtz-Kirchhoff system for $2$D point vortices \cites{MR1867882, marchioro2012mathematical, ricca1996contributions}. Under an a priori concentration assumption, Jerrard and Seis \cite{MR3609248} provided the first rigorous justification of \eqref{binormal}.

In this context, a central open problem is the \emph{vortex filament conjecture}, which posits that solutions of \eqref{euler} initially concentrated near a curve evolving by \eqref{binormal} remain concentrated for some time. Although still open in general, significant progress has been achieved for certain symmetric curves by using explicit solutions of \eqref{binormal}, such as translating circles and helices.

For the case of circular filaments, Fraenkel \cite{fraenkel1970steady} rigorously constructed \emph{vortex rings} traveling at speed $\mathcal{O}(|\log \ve|)$, thereby confirming the conjecture for translating circles; see also  \cites{MR3101789,MR302044,MR422916,MR1738349} for subsequent developments. For helical filaments, the first rigorous construction of \emph{vortex helices} was obtained in \cite{MR4417384}, validating Joukowsky’s early predictions \cite{joukowsky1912vihrevaja}, with further extensions concerning multi-helix clusters in \cite{MR4899797} and nearly parallel helical configurations in \cite{guerra2025nearlyparallelhelicalvortex}, providing a rigorous justification of the model in \cite{MR1325356}. Moreover, related works examine helical dynamics in infinite pipes and bounded helical domains \cites{donati2024dynamics,MR4555251,MR4660717,MR4534710}, including vortex patches, whereas  \cite{qin2024concentratedvortices3dincompressible} removes previous orthogonality constraints by constructing helical vortices with non-vanishing swirl. Finally, global well-posedness and long-time behaviour are addressed in \cites{MR4809294,guo2024longtimedynamicshelical,MR3320529}, while the analysis of helical structures in models closely related to the Euler equations can be found in \cites{MR3449250, MR2181460, MR4540752, MR4891431}. 
\subsection{Setting of the problem}
In this work, we revisit the vortex filament conjecture for the case of translating–rotating helices, and provide a novel construction of a smooth helical filament $\vec{\omega}_{\ve}(x,\tau)$ in the whole $\R^3$, whose \emph{cross-sectional vorticity} is compactly supported in $\R^2$ for all times. In concrete terms, for a small $\ve>0$, the three-dimensional filament will be confined within a tubular neighbourhood of size $\ve>0$ around a helix for all $\tau \in \R$, while its cross-sectional vorticity (obtained by intersecting the filament with a plane orthogonal to its tangent vector) will correspond to a smooth vorticity profile with compact support. In particular, there exist universal constants $C_1,C_2>0$ such that for all $\tau \in \R,$ it holds $C_1\ve \leq\diam\left(\operatorname{supp}\vec{\omega}_{\ve}\cap \left\{\R^2\times \{0\}\right\} \right) \leq C_2\ve$. Here and in what follows, for any bounded set $A$, we denote $\diam(A)=\sup\limits_{x,y\in A} |x-y|.$

Let us be more precise. For any $h>0$ and a fixed $r_0>0$, we consider a point $P=(r_0,0)\in\R^2$ and define the time-evolving curve  $\mathcal{G}(\tau)$ parametrised by 
\begin{equation} \label{parhelix}
	\gamma (s,\tau ) = \left( \begin{matrix}
		r_0 \cos \left(
		\frac{s-\sigma_{1} \tau}{\sqrt{h^2 + r_0^2}} \right)
		\\
		r_0 \sin \left(
		\frac{s-\sigma_1 \tau}{\sqrt{h^2 + r_0^2}} \right)
		\\
		\frac{ h s + \sigma_{2} \tau }{\sqrt{h^2+ r_0^2}}
	\end{matrix} \right) \in \R^3, \quad \sigma_1 = \frac{2\, \bar{c} \, h}{r_0^2 + h^2},
	\quad 
	\sigma_2 = \frac{2\, \bar{c}\, r_0^2}{r_0^2 + h^2},
	\end{equation}
This map gives an arclength parametrization of a circular helix with radius $r_0$ and height $h$, combining rotation and simultaneous translation. In addition, with curvature $\kappa = \frac{r_0}{r_0^2+h^2}$ and torsion $\frac{h}{r_0^2 + h^2}$, a direct computation shows that $\gamma$ evolves by the binormal flow \eqref{binormal}.
	
In view of our interest in helical filaments, we restrict our analysis to a particular class of symmetry, which naturally leads to the notion of helical symmetry. Defining the rotation matrices 
\begin{equation}\label{rotmatr}
R_{\theta} = 
\begin{pmatrix}
\cos\theta & -\sin\theta \\
\sin\theta & \cos\theta
\end{pmatrix}, \quad 
    Q_\theta = \left( \begin{matrix} \cos \theta & -\sin \theta & 0\\\sin \theta & \cos \theta & 0\\ 0 & 0 & 1\end{matrix} \right),
\end{equation}
we say that a scalar function $f:\R^3\to\R$ and a vector field $F:\R^3\to\R^3$ have helical symmetry, if for all $h>0$ and $\theta\in \R$, they satisfy 
\begin{equation}\label{helicalsym}
f\left(R_{\theta}x',x_3+h\theta\right)=f(x), \quad F\left(R_{\theta}x',x_3+h\theta\right)=Q_{\theta}F(x), \quad x=(x',x_3)\in \R^3.
\end{equation}

\subsection{Statement of the main result}
The solution we construct belongs to the class of flows with helical symmetry, subject to the additional requirement that the velocity field in \eqref{euler} is orthogonal to the tangent lines of a helix \cites{MR2505860,MR1717127}. 

The main result of this work reads as follows.
\begin{theorem}\label{maintheorem}
Let $r_0>0,\,h>0,$ and consider the helix $\mathcal{G}(\tau)$ parametrised by \eqref{parhelix}, as well as the rotation matrices $R_{\theta}$ and $Q_{\theta}$ in \eqref{rotmatr}. Then, there exist a constant $c>0$ and a smooth, global-in-time solution $\vec{\omega}_{\ve}(x,\tau)$ to \eqref{euler} with compactly supported cross-sectional vorticity in $\R^2$, such that for $\sigma_1,\,\sigma_2$ as in \eqref{parhelix} and all $\tau \in \R$, it satisfies
\begin{equation*}
\vec{\omega}_{\ve}(x,\tau)=Q_{\sigma_1\tau}\vec{\omega}_{\ve}\left(R_{-\sigma_{1} \tau}x',x_3+\sigma_{2}\tau,0\right) , \quad \vec{\omega}_{\ve}(x,\tau)=\vec{\omega}_{\ve}\left(x',x_3+2\pi h,\tau\right),
\end{equation*}
and
\[\vec{\omega}_{\ve}\left(x,|\log\ve|^{-1}\tau\right)\rightharpoonup c\delta_{\mathcal{G(\tau)}}\mathbf {T}_{\mathcal{G}(\tau)} \quad \mbox{as} \quad \ve\to 0\]
in the sense of measures, where $\delta_{\mathcal{G}(\tau)}$ denotes a uniform Dirac delta supported on $\mathcal{G}(\tau)$ and $\mathbf{T}_{\mathcal{G}(\tau)}$ its tangent unit vector. 
\end{theorem}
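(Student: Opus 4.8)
\emph{Reduction to a two-dimensional problem.} To prove Theorem~\ref{maintheorem}, the plan is first to use the helical symmetry and the no-swirl constraint to reduce \eqref{euler} to a two-dimensional problem on $\R^2$, then to pass to the frame co-moving with the helix $\mathcal G$ so that the unknown becomes a \emph{steady} state, and finally to produce a solution concentrating near $P=(r_0,0)$ by an inner--outer gluing construction; the compact support of the cross-sectional vorticity will be built into the choice of the vorticity--stream profile. Writing $x=(x',x_3)$ and letting $\xi(x)=(-x_2,x_1,h)$ be the generator of the helical group, a helically symmetric field with $\mathbf u\cdot\xi\equiv 0$ is determined by a scalar stream function $\Psi$ and a scalar vorticity $w$ on the quotient $\R^3/\!\sim$, identified with $\R^2$ via the coordinate $y=x'$ on $\{x_3=0\}$, through $\vec\omega_\ve(x,\tau)=w_\ve(y,\tau)\,\xi(x)/|\xi(x)|$ (up to normalisation); since $\xi/|\xi|$ restricted to $\mathcal G(\tau)$ equals $\mathbf T_{\mathcal G(\tau)}$, this will fix the direction of the limiting vorticity. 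The system \eqref{euler} turns into an active-scalar equation $\partial_\tau w+U[\Psi]\cdot\nabla w=0$ coupled with $-L\Psi=w$, where $L\Psi:=\operatorname{div}(A(y)\nabla\Psi)$ is the helical Laplacian, $A(y)$ being symmetric positive with eigenvalue $1$ along the radial direction and $h^2/(h^2+|y|^2)$ along the angular one; in particular $A$ degenerates to rank one as $|y|\to\infty$, a feature absent from the bounded-domain and infinite-pipe settings studied previously.

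\emph{The steady problem and compact support.} Since \eqref{parhelix} is a rigid rotation at speed $\sigma_1$ combined with a vertical translation at speed $\sigma_2$, and since on the quotient a vertical shift acts as a rotation, the composite motion is a single rotation, so in the co-moving frame the sought $\vec\omega_\ve$ corresponds to a steady solution of the reduced system. Such a steady state satisfies a Grad--Shafranov relation $w=g_\ve(\Psi-V-\mu_\ve)$ on its support and $w=0$ elsewhere, that is,
\[
-L\Psi=g_\ve\!\left(\Psi-V-\mu_\ve\right)\qquad\text{in }\R^2 ,
\]
where $V$ is the stream function of the co-rotating rigid field (one checks, using the explicit $\sigma_1,\sigma_2$, that $V\to+\infty$ at spatial infinity), $\mu_\ve$ is a flux parameter to be determined, and $g_\ve$ is chosen smooth, supported in a compact subinterval of $(0,\infty)$, and flat at its endpoints (an $\ve$-dependent rescaling of a fixed nonlinearity $g_0$). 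With this choice $w=g_\ve(\Psi-V-\mu_\ve)$ is automatically $C^\infty$, and its support, contained in $\{\Psi-V-\mu_\ve>0\}$, is bounded precisely because $V$ is confining --- this is the mechanism yielding a compactly supported cross-sectional vorticity for all times.

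\emph{Approximate solution and core asymptotics.} Next I would build a first approximation $\Psi_*$ by seating a rescaled profile at $P$: setting $y=P+\ve z$ and $\Psi-V-\mu_\ve\approx\mathcal A_\ve\,\mathcal W(z)$, the leading inner equation, after diagonalising the constant matrix $A(P)$ by a linear change of variables, is the fixed semilinear equation
\[
-\Delta_z\mathcal W=g_0(\mathcal W)\qquad\text{in }\R^2 ,
\]
which by ODE shooting admits a radial solution that is positive on a disk, negative with logarithmic growth outside it, and whose nonlinear term $g_0(\mathcal W)$ is compactly supported --- exactly the profile required. Matching the logarithmic tail of $\mathcal W$ with the outer part of $\Psi_*$ (whose regular part at $P$ carries the vortex self-interaction) fixes $\mathcal A_\ve\sim|\log\ve|$, pins $\mu_\ve$ down in terms of the circulation and $|\log\ve|$, forces the core radius to be comparable to $\ve$, and identifies the limiting constant $c$. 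The solvability of the first-order corrections is a Pohozaev/Melnikov-type identity obtained by testing the residual against the kernel $\operatorname{span}\{\partial_{z_1}\mathcal W,\partial_{z_2}\mathcal W\}$ of the linearised limiting operator $-\Delta_z-g_0'(\mathcal W)$; this identity is precisely the requirement that $P$ evolve by the binormal flow \eqref{binormal}, which holds by the very choice of $\sigma_1,\sigma_2$ in \eqref{parhelix}, so that the remaining error is of high order in $\ve$.

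\emph{Gluing, conclusion, and the main difficulty.} I would then write $\Psi=\Psi_*+\phi$, split $\phi=\phi_{\mathrm{in}}+\phi_{\mathrm{out}}$ into an inner part living in the blown-up variable near $P$ and an outer part, and close the coupled system by a contraction argument. This rests on two linear ingredients: invertibility of the outer operator $L$ on the \emph{unbounded} plane $\R^2$, in weighted norms adapted both to the degeneracy of $A(y)$ at infinity and to the logarithmic growth of its fundamental solution; and invertibility modulo kernel of the inner operator $-\Delta_z-g_0'(\mathcal W)$, resting on the non-degeneracy of $\mathcal W$ (its bounded kernel being spanned by the two translations, which one checks by analysing the radial profile and comparing the higher angular modes), the orthogonality conditions being absorbed by the free location parameter near $P$ together with $\mu_\ve$. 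Having solved the reduced problem for all small $\ve$, undoing the co-moving change of frame produces $\vec\omega_\ve(x,\tau)$, which is helically symmetric and obeys the stated equivariance and $2\pi h$-periodicity identities by construction; the weak convergence $\vec\omega_\ve(\cdot,|\log\ve|^{-1}\tau)\rightharpoonup c\,\delta_{\mathcal G(\tau)}\mathbf T_{\mathcal G(\tau)}$ follows from the concentration together with the core asymptotics and the binormal-flow property of \eqref{parhelix}, the two-sided bound $C_1\ve\le\diam\left(\operatorname{supp}\vec\omega_\ve\cap(\R^2\times\{0\})\right)\le C_2\ve$ from the quantitative control of the free boundary $\{\Psi=V+\mu_\ve\}$ near $P$ (the cross-section orthogonal to $\mathbf T_{\mathcal G}$ differing from $\R^2\times\{0\}$ by a fixed invertible linear map), and smoothness from the choice of $g_\ve$ and elliptic regularity. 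The principal obstacle is the linear theory for $L$ on the whole plane: the degeneration of $A(y)$ as $|y|\to\infty$ precludes a direct use of standard elliptic estimates and forces a carefully weighted functional framework in which $L$ is Fredholm. This must be reconciled with genuine compact support --- hence with a free boundary and a vorticity--stream nonlinearity of limited leverage --- while keeping enough regularity and non-degeneracy of the profile to run the fixed-point scheme; controlling the interaction of the free boundary with the inner linearisation, uniformly in $\ve$, is the technical heart of the argument.
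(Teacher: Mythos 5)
Your overall architecture matches the paper's: helical reduction to a $2$D active scalar, a rotating ansatz turning the problem into a semilinear elliptic equation $\nabla\cdot(K\nabla\Psi)+f(\Psi-\tfrac{\alpha}{2}|\log\ve||x|^2)=0$ with a confining quadratic potential producing compact support of the vorticity, an inner profile obtained by normalising $K$ at $P$ by a linear change of variables, and an inner--outer gluing closed by a contraction, with the translational solvability conditions tied to the binormal speed (in the paper, the $y_2$-condition vanishes by evenness and the $y_1$-condition fixes $\alpha=-\nu'(1)/(2(h^2+r_0^2))+O(|\log\ve|^{-1})$; your ``location parameter'' plays no role there). The paper's nonlinearity is $f(s)=\ve^{-2}(s+\nu'(1)|\log\ve|)_+^{\gamma}$, $\gamma>3$, built on the ground state of $\Delta\nu+\nu^{\gamma}=0$ in $B_1$, for which the Dancer--Yan non-degeneracy is available; your $C^\infty$, compactly-supported-in-value $g_\ve$ is a plausible variant but leaves the existence and non-degeneracy of the radial profile $\mathcal W$ as unproven assertions.

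The genuine gap is in your inner linear theory. You invoke non-degeneracy in the form ``the bounded kernel of $-\Delta_z-g_0'(\mathcal W)$ is spanned by the two translations'' and propose to absorb the orthogonality conditions with the location parameter and $\mu_\ve$. But the inner correction here is \emph{not} bounded: because the right-hand side of the linearised inner equation contains the term $\gamma\Gamma^{\gamma-1}_{+}\phi_{out}$ with $\int_{\R^2}\gamma\Gamma^{\gamma-1}_{+}Z_0=O(1)\neq 0$, the solution generically grows like $\log|y|$ along the dilation element $Z_0=\tfrac{2}{\gamma-1}\Gamma+y\cdot\nabla\Gamma$, which therefore enters the kernel and cannot be discarded (this is exactly the point where the present setting departs from the Liouville case, where the analogous integral vanishes and the system decouples). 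Without a mechanism for the $Z_0$-direction your fixed point does not close: the $O(1)$ projection onto $Z_0$ destroys the smallness of $\phi_{in}$ and feeds back into the outer problem. The paper resolves this by splitting $\phi_{in}=\phi_1+\phi_2$ with $\phi_2$ carrying the $d_0\gamma\Gamma^{\gamma-1}_{+}Z_0$ projection, normalising $H_{2\ve}(P)=\phi_{out}(P)=0$ so that a Taylor expansion of $\phi_{out}$ at $P$ gains a factor $\ve|y|$ in $\int H\,Z_0$, and only then running the contraction; none of this is anticipated in your proposal, and it is the technical heart of the construction rather than the degeneracy of $K$ at infinity (which is handled by an off-the-shelf outer Poisson theory with quadratic growth).
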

 
The construction of Theorem \ref{maintheorem} can be extended to a multi-vortex solution arranged along the vertices of a regular polygon. For any integer $ k \geq 2$ and the helix in \eqref{parhelix}, we consider the curves $\mathcal{G}_j(\tau)$ parametrised by 
\begin{equation}\label{helicesj}
\gamma_{j}(s,\tau)=Q_{\frac{2\pi(j-1)}{k}}\gamma(s,\tau) , \quad j=1,2,\dots,k.
\end{equation}
We additionally obtain the following Theorem.

\begin{theorem}\label{theorem 2}
Let $r_0>0,\,h>0$ and $k\geq 2$ be an integer. Consider the helices $\mathcal{G}_j(\tau), \, j=1,\dots,k,$ parametrised by \eqref{helicesj}, and the rotation matrices $R_{\theta}$ and $Q_{\theta}$ in \eqref{rotmatr}. Then, there exist a constant $c>0$ and a smooth, global-in-time solution $\vec{\omega}_{\ve}(x,\tau)$ to \eqref{euler} with cross-sectional vorticity supported in a finite union of compact balls in $\R^2$, such that for $\sigma_1,\,\sigma_2$ as in \eqref{parhelix} and all $\tau \in \R$, it satisfies
\begin{equation*}
\vec{\omega}_{\ve}(x,\tau)=Q_{\sigma_1\tau}\vec{\omega}_{\ve}\left(R_{-\sigma_{1} \tau}x',x_3+\sigma_{2}\tau,0\right) , \quad \vec{\omega}_{\ve}(x,\tau)=\vec{\omega}_{\ve}\left(x',x_3+2\pi h,\tau\right),
\end{equation*}
and
\[\vec{\omega}_{\ve}(x,|\log\ve|^{-1}\tau) \rightharpoonup c\sum_{j=1}^{k}\delta_{\mathcal{G}_j(\tau)}\mathbf{T}_{\mathcal{G}_j(\tau)}\quad \mbox{as} \quad \ve\to 0\]
in the sense of measures, where $\delta_{\mathcal{G}_j(\tau)}, \, j=1,\dots,k,$ denotes a uniform Dirac delta supported on $\mathcal{G}_j(\tau)$ and $\mathbf{T}_{\mathcal{G}_j(\tau)}$ its tangent unit vector.
\end{theorem}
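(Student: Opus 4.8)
\emph{Reduction to a stationary planar problem.} The plan is to reproduce the gluing/finite‑dimensional reduction scheme behind Theorem \ref{maintheorem}, now transporting $k$ vortex cores at once and exploiting the extra $\Z_k$ rotational symmetry of the polygonal array \eqref{helicesj}. First I would use the helical symmetry \eqref{helicalsym} together with the orthogonality (no–swirl) condition to reduce \eqref{euler} to a scalar active–scalar equation on $\R^2$: the three–dimensional vorticity is recovered from a cross-sectional density $w(\cdot,\tau)$ whose velocity is $\nabla^\perp\Psi$, with $\Psi$ solving a divergence–form elliptic equation $-\div(A_h\nabla\Psi)=w$, where $A_h=A_h(y)$ is the smooth helical coefficient matrix, comparable to the identity near any fixed radius. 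Passing to the co–rotating, co–translating frame $y=R_{-\sigma_1\tau}x'$, $z=x_3+\sigma_2\tau$ dictated by \eqref{parhelix} turns the helices $\mathcal{G}_j(\tau)$ into the fixed planar points $P_j=R_{2\pi(j-1)/k}(r_0,0)$ and converts the evolution into a semilinear elliptic problem for a stationary profile. Since $Q_{2\pi/k}$ commutes with the helical action, with the frame in \eqref{parhelix}, and with \eqref{euler}, I would seek the solution inside the class of functions invariant under rotation by $2\pi/k$ about the $x_3$–axis; this simultaneously enforces the polygonal structure and collapses most of the bookkeeping to a single core in a fundamental sector.

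\emph{Approximate solution with compactly supported cores.} As building block I would take the smooth, radially decreasing solution $W$ on $\R^2$ of $-\Delta W=f(W)$, with $f$ smooth, $f\equiv0$ on $(-\infty,0]$ and vanishing to infinite order at $0$. Since $m:=\int_{\R^2}f(W)>0$, one has $W(y)=-\tfrac{m}{2\pi}\log|y|+O(1)\to-\infty$, so $\{W>0\}$ is a bounded disc and $f(W)$ is a \emph{smooth, compactly supported} profile — this is exactly the mechanism that both yields the diameter bounds $C_1\ve\le\diam(\cdots)\le C_2\ve$ and improves on the patch–type constructions. The first approximation is a superposition of $k$ suitably rescaled (at scale $\ve$, amplitude $\sim|\log\ve|$) and normalized copies of this profile centred near $P_1,\dots,P_k$, each corrected so that its logarithmic far field matches (i) the regular part of the helical Green's function and (ii) the smooth field generated near $P_j$ by the other $k-1$ cores; because the mutual distances are $\gtrsim r_0$ while the core sizes are $\sim\ve$, each interaction term is $O(1/|\log\ve|)$ on the inner scale. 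I would then estimate the error of this ansatz in weighted $L^\infty$/$L^p$ norms adapted to the core scale.

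\emph{Linear theory, fixed point, and reduced equation.} Around one rescaled core the linearized operator has kernel spanned by $\partial_{y_1}W,\partial_{y_2}W$ (infinitesimal translations); restricted to the $\Z_k$–symmetric class only the synchronous radial mode of the polygon survives. I would establish invertibility of the linearization modulo this single obstruction in the weighted spaces, solve the nonlinear problem for the correction by a contraction mapping, and then adjust the one remaining parameter — the common radius $\rho\approx r_0$ of the cores. The resulting scalar reduced equation is the criticality condition for the renormalized Kirchhoff–Routh energy of the helical kernel restricted to regular $k$–gons: its self–induction part is precisely the binormal balance \eqref{binormal} that selects the relation in \eqref{parhelix}, the (symmetric) mutual–interaction part is an $o(1)$ correction, and together they are solved by a radius $\rho=r_0+o(1)$. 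Lifting this stationary co–rotating planar solution back to $\R^3\times\R$ through \eqref{helicalsym} yields the stated equivariance and $2\pi h$–periodicity; smoothness follows from that of $f$ and elliptic regularity; and since for a helically symmetric no–swirl field the three–dimensional vorticity equals the cross-sectional density times the helical line field, which restricts to $\mathbf{T}_{\mathcal{G}_j(\tau)}$ along each helix, the planar concentration $w_\ve\rightharpoonup c\sum_j\delta_{P_j}$ becomes $\vec{\omega}_{\ve}(\cdot,|\log\ve|^{-1}\tau)\rightharpoonup c\sum_{j=1}^{k}\delta_{\mathcal{G}_j(\tau)}\mathbf{T}_{\mathcal{G}_j(\tau)}$.

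\emph{Main obstacle.} Beyond what the single–filament case already handles, the crux is the simultaneous control of the $k$ interacting cores: one must show that the field produced by the other cores is, near a given core and on the inner scale, a slowly varying smooth function whose effect is confined to the reduced equation and does not spoil the invertibility of the linearization; that the $\Z_k$–symmetric reduction genuinely collapses the $2k$–dimensional translation obstruction to a single, non-degenerate radial mode at the polygon (equivalently, that the symmetric Hessian of the renormalized energy is non-singular at radius $r_0$); and that the non-standard helical Green's function, with its far–field corrections, can be inserted uniformly at all $k$ cores while keeping each core's support inside a disc of radius $\sim\ve$ for every $\tau\in\R$. These uniform-in-$\tau$ and uniform-across-cores estimates — rather than any single new idea — are the technical heart of the extension.
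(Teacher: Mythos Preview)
Your outline captures the architecture correctly---reduce to a rotating planar semilinear problem, superpose $k$ compactly supported cores with dihedral symmetry, glue, and solve a one-dimensional reduced equation---and this is indeed what the paper does (its proof of Theorem~\ref{theorem 2} is a two-line remark: write the $\Z_k$-symmetric ansatz, then declare the rest identical to the single-helix case). There are, however, two substantive divergences and one genuine gap.

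\textbf{Divergences.} The paper does not vary the radius: $P=(r_0,0)$ is fixed throughout, and the single free parameter used to kill the surviving translational obstruction is the rotation speed $\alpha$ in \eqref{rotansatz}, solved for explicitly as $\alpha=-\nu'(1)/\big(2(h^2+r_0^2)\big)+O(|\log\ve|^{-1})$. Your proposal to adjust $\rho$ instead is a legitimate alternative, but it is not the paper's route. Likewise the paper uses the power nonlinearity $f(s)=\ve^{-2}(s+\nu'(1)|\log\ve|)_+^{\gamma}$, $\gamma>3$, rather than a $C^{\infty}$ function flat at $0$; both give compact vorticity support, but the choice matters for the linear theory.

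\textbf{The gap.} Your claim that the linearised operator at one core has kernel spanned by $\partial_{y_1}W,\partial_{y_2}W$, collapsing under the dihedral symmetry to a single obstruction, is incomplete. That is only the \emph{bounded} kernel. Because the inner profile itself grows like $-\tfrac{m}{2\pi}\log|y|$, the relevant linear inverse produces logarithmic growth unless a \emph{third} orthogonality condition---against a radial mode $Z_0$---is satisfied; for the power nonlinearity this mode is the scaling generator $Z_0=\tfrac{2}{\gamma-1}\Gamma+y\cdot\nabla_y\Gamma$, and it survives the dihedral reduction. Worse, $\int_{\R^2}\gamma\Gamma_+^{\gamma-1}Z_0\neq0$, which couples the $Z_0$-projection of the inner right-hand side strongly to the outer correction. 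The paper handles this (see Remark~1.4 and Proposition~\ref{propsec7}) via a dedicated splitting $\phi_{in}=\phi_1+\phi_2$ with $\phi_2$ absorbing the $Z_0$-projection, together with the normalisation $\phi_{out}(P)=0$; without such a mechanism the contraction does not close in any weighted space that gives decay. The same unbounded radial kernel element exists for your smooth-$f$ profile (it is the smooth-at-$0$ solution of the radial linearised ODE, necessarily $\sim\log r$ at infinity by nondegeneracy), so changing the nonlinearity does not avoid the issue.
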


We make some remarks on Theorems  \ref{maintheorem} and \ref{theorem 2}, commenting on their connection to the existing literature.
\begin{remark}
To the best of our knowledge, the construction in Theorem \ref{maintheorem} provides the first smooth helical filament in the whole space $\R^3$ with compactly supported cross-sectional vorticity in $\R^2$ (obtained by intersecting the filament with a plane orthogonal to its tangent vector) for all times. More precisely, for $\ve>0$ small, the vorticity $\vec{\omega}_{\ve}(x,\tau)$ remains confined within a tube of size $\ve>0$ around the helix \eqref{parhelix} for all $\tau \in \R$, with $C_1\ve \leq \diam{\left(\operatorname{supp}\vec{\omega}_{\ve} \cap \left\{ \R^2 \times \{0\}\right\}\right)} \leq C_2\ve$\ for some universal constants $C_1,C_2>0$. An analogous statement is also valid for Theorem \ref{theorem 2}, where the solution remains concentrated along a polygonal configuration of multiple helices for all times, with cross-sectional vorticity supported in a finite union of compact balls.
\end{remark}
\begin{remark}
A related result was obtained in \cite{donati2024dynamics} for bounded helical domains, where vorticity initially concentrated near helices of pairwise distinct radii remains concentrated, with the associated cross-sectional vorticity being supported in a thin annulus. Although the authors achieve strong radial localisation in each finite interval $[0,T],$ only $L^1$ control is obtained along the flow direction. In contrast, our solutions exhibit strong confinement in both directions, as the vorticity remains concentrated with cross-sectional vorticity compactly supported for all times, which seems unattainable in \cite{donati2024dynamics} due to the lack of uniform-in-time estimates. Besides, the assumption of distinct radii excludes polygonal configurations as in Theorem \ref{theorem 2}. Moreover, \cite{MR4534710} is concerned with related solutions in infinite pipes obtained via variational methods. However, it is unclear whether these results can be extended to the whole $\R^3$ due to the failure of the uniform ellipticity of $L_x$ in \eqref{operL} in the entire $\R^2$, and the breakdown of the  expansion of the Biot–Savart law used. Finally, for non-smooth vortex patches in the whole $\R^3$ obtained through variational methods and bifurcation theory, 
we refer to \cites{MR4834945, cao2024helicalkelvinwaves3d}.
\end{remark}

\subsection{Reduction of the problem}\label{sub13} 
We exploit the invariance of the Euler equations under helical symmetry \cites{MR2505860,MR1717127}, taking $\mathbf{u}$ and $\vec{\omega}$ to satisfy \eqref{helicalsym} and the "helical no-swirl" condition
\[\textbf{u}\cdot \vec\xi=0,\quad  \mbox{where} \quad \vec\xi(x)=(-x_2,x_1,h)\in\R^3.\]
Ettinger and Titi \cite{MR2505860} showed that solving \eqref{euler} reduces to a 2D transport equation, establishing global well-posedness through Yudovich's method \cite{MR158189}. In this framework, for $x=(x_1,x_2,x_3)\in\R^3$, $x'=(x_1,x_2)\in\R^2$ and the rescaled time $t=|\log\ve|^{-1}\tau$, we consider the scalar transport equation for $w(x',\tau)$ given by \begin{equation}\label{PB}
		\begin{aligned}
			\left\{
			\begin{aligned}
				|\log\ve|w_\tau + \nabla^\perp \psi \cdot \nabla w &=0 && {\mbox {in}} \quad \R^2 \times (-\infty, +\infty),
				\\
				- {\mbox {div}} (K \nabla \psi) &= w && {\mbox {in}} \quad \R^2 \times (-\infty,+\infty),
			\end{aligned}
			\right.
		\end{aligned}
	\end{equation}
	with the convention $(a,b)^\perp = (b,-a)$ and $K (x_1,x_2) $ denoting the matrix
\[
	K(x_1 , x_2 ) = \frac{1}{h^2+x_1^2+x_2^2}
	\left(
	\begin{matrix}
		h^2+x_2^2 & -x_1  x_2\\
		-x_1 x_2 & h^2+x_1^2
	\end{matrix}
	\right).
\]
For a self-contained derivation of \eqref{PB}, we refer to \cites{MR2505860,MR4417384}.

For $R_{\theta}$ denoting the rotation matrix in \eqref{rotmatr} and $w(x',\tau)$ solving \eqref{PB}, it was shown in \cite{MR2505860} that the vorticity vector defined as
	\begin{align}
		\label{omegaHelical}
		\vec \omega(x,\tau)  =  \frac{1}{h} w\left( R_{-\frac {x_3}h} x',\tau\right)  \, \vec\xi(x)  , \quad x = (x',x_3)\in\R^3
	\end{align}
	has helical symmetry and solves the Euler equations \eqref{euler}, while rotating solutions to \eqref{PB} of the form
\begin{equation}
\label{rotansatz}
w (x', \tau) = W \left( R_{ \alpha \tau } x' \right), \quad \psi (x',\tau) = \Psi \left( R_{ \alpha \tau } x' \right), \quad x'=(x_1,x_2)\in\R^2,
\end{equation}
can be obtained through the  semilinear elliptic equation  
\begin{equation}\label{semilin}
\nabla_{\tilde{x}}\cdot\left(K\nabla_{\tilde{x}}\Psi\right)+f\left(\Psi-\frac{\alpha}{2}|\log\ve||\tilde{x}|^2\right)=0 \quad \mbox{in} \quad \R^2, \quad \tilde{x}=R_{\alpha\tau}x'.
\end{equation}
For our objectives, one must choose a suitable nonlinear function $f$ in \eqref{semilin} so that for some $r_0>0,$ the vorticity $W(\tilde x)=f\left(\Psi-\frac{\alpha}
{2}|\log\ve||\tilde x|^2\right)$
will be compactly supported around the point $P=(r_0,0)\in\R^2$. In this way, the profile $w(x',\tau)=W(R_{\alpha\tau} x')$ in \eqref{rotansatz} will rotate rigidly around the origin with a constant rotational speed $\alpha$, yielding a three-dimensional helical filament $\vec{\omega}$ in \eqref{omegaHelical} with compactly supported cross-sections in $\R^2$ for all times.

\begin{remark}
The solutions in Theorems \ref{maintheorem} and \ref{theorem 2} are constructed using the Inner-Outer gluing scheme, which was also employed in \cite{MR4417384}. In this work, we employ a nonlinearity $f$ with compact support in \eqref{semilin}, inspired by the semilinear elliptic equation $\Delta u + u^{\gamma}_{+}=0 $ in $\R^2$, where $u_{+}=\max(0,u)$ and $\gamma>3$. In contrast to the Liouville equation $\Delta u +e^{u}=0$ in $\R^2$ used in \cite{MR4417384} where $f(s)\sim e^{s}$ yields a fast-decaying vorticity, the particular choice $f(s)\sim s^{\gamma}_{+}$ gives vorticity supported within a tube of size $\ve>0$ around the helix for all times. We note that a similar nonlinearity has been used in \cites{davila2023global,MR4767454} to construct traveling vortex-pair solutions for the Euler equations in $\R^2$, while a nonlocal analogue was employed in \cite{MR4302173} to obtain traveling and rotating solutions for the generalized inviscid SQG equation in $\R^2$. Nevertheless, these results cannot be adapted to our setting, since they concern bounded solutions (cf. Lemma \ref{lemat}).
\end{remark}
\begin{remark}
    Our constructions rely on elliptic singular perturbation methods, where a crucial step involves the linearisation around an approximate solution. In this framework, we are naturally led to study the linearised operator $\Delta+\gamma\Gamma^{\gamma-1}_{+}$ in $\R^2$, where $\Gamma$ is defined in \eqref{defGamma}. A non-degeneracy result of Dancer and Yan \cite{MR2456896} indicates that the only bounded functions in the kernel of this operator in $\R^2$ are given by the partial derivatives of $\Gamma$. However, in our setting, the solution exhibits logarithmic growth at infinity (see Lemma \ref{lemat}), which introduces an additional radial component $Z_0=\mathcal{O}\left(\left(\log(2+|y|\right)\right)$ in the kernel. This feature complicates our analysis, since the estimate $\int_{\R^2}\gamma\Gamma^{\gamma-1}_{+}Z_0 = O(1)$ induces a strong coupling between the Inner and Outer Equations (we refer to Section \ref{Section 5} and Proposition \ref{propsec7} for more details). 
    On the other hand, \cite{MR4417384} shows that the associated radial element $\tilde{Z}_{0}=O(1)$ in the kernel of the linear operator $\Delta+e^{u}$ in $\R^2$   (with u solving $\Delta u+e^{u}=0$ in $\R^2$) satisfies
$\int_{\R^2} e^{u}\,\tilde Z_{0}=0$, which decouples the Inner-Outer system at main order. This obstruction implies that the construction of the helical filaments in our case is more delicate, requiring several adjustments of existing gluing methods.
\end{remark}
In summary, following the preceding discussion, in the remainder of this work we write $x$ instead of $\tilde{x}$ in \eqref{semilin} for ease of notation, and focus on solving the equation
\begin{equation}\label{eqtosolve}
\nabla_{x}\cdot\left(K\nabla_{x}\Psi\right)+f\left(\Psi-\frac{\alpha}{2}|\log\ve||x|^2\right)=0 \quad \mbox{in} \quad \R^2.
\end{equation}
 For a fixed $r_0>0$ and the point $P=(r_0,0)\in\R^2,$ our aim is to construct a stream function $\Psi$ for \eqref{eqtosolve} such that for some constant $c>0$ it exhibits the asymptotic behaviour $-\nabla_x \cdot (K\nabla_x\Psi) \sim c\delta_{P}$ as $\ve\to 0$, while we also require that the vorticity profile $W(x)=f\left(\Psi-\frac{\alpha}{2}|\log\ve||x|^2\right)$   has compact support around $P.$ Therefore, we realise that in a small neighbourhood of the point $P,$\, the stream function $\Psi$ must resemble a Green's
function for the elliptic operator $-\nabla_{x}\cdot(K\nabla_{x}\cdot)$ in $\R^2$ up to lower-order corrections, capturing the locally singular structure. 
\subsection{Notation} Throughout the paper,  $c>0$ and $C>0$ denote generic constants that may change from line to line and are independent of all relevant quantities. In addition,  $B_{r}=B_{r}(0)$ denotes the open ball of radius $r>0$ centered at the origin.

\subsection{Structure of the paper}
This work is organized as follows. In Section \ref{Section 2}, we construct a regularized approximation of the Green’s function for the operator $-\nabla_{x}\cdot(K\nabla_{x}\cdot)$ in $\R^2$ via a change of variables, using a radial profile $\Gamma$ solving a semilinear elliptic equation. Section \ref{Section 3} selects a compactly supported nonlinearity $f$ and derives error estimates for the preceding approximation. In Section \ref{Section 4}, we apply the \emph{Inner–Outer gluing scheme} to perturb the approximation into an exact solution, relying on the linear theories from Sections \ref{Section 5} and \ref{Section 6}. Section \ref{Section projected} establishes existence and uniqueness for a projected linearised problem through a fixed point argument. In Section \ref{reduced}, we choose the rotational speed in \eqref{rotansatz} so that the solution of the projected problem can be lifted to a genuine solution. Finally, Section \ref{multihelices} outlines the proof of Theorem \ref{theorem 2}.
\section{Construction of the approximate stream function}\label{Section 2} 
Let $r_0>0,\, h>0 $ and consider the point $P=(r_0,0)\in\R^2$. This section is devoted to the construction of an approximate  stream function $\Psi$ satisfying 
\begin{equation}\label{ori}
-\nabla_{x}\cdot (K\nabla_{x}\Psi) \sim c\delta_P
\end{equation}
in a neighbourhood of the point $P$. 

Hereafter, for ease of notation, we set 
\begin{equation}\label{operL}
L_x=\nabla_{x}\cdot (K\nabla_{x}\cdot),
\quad \text{with} \quad
K=\frac{1}{h^2+x_1^2+x_2^2}
\begin{pmatrix}
h^2+x_2^2 & -x_1x_2 \\
-x_1x_2 & h^2+x_1^2
\end{pmatrix}.
\end{equation}
We construct an approximate Green's function for \eqref{ori} by studying $L_x$ locally around $P=(r_0,0)\in\R^2$. Unlike the Laplacian $\Delta_x$, this elliptic operator in divergence form has variable coefficients, producing anisotropic diffusion that degenerates for large $|x|$. Besides, the parameter $h$ formally interpolates between the 3D axisymmetric case ($h\to 0$) and the standard planar setting ($h\to\infty$).

Unlike planar or axisymmetric settings, the helical framework lacks coordinates aligned with symmetry directions \cite{MR2505860}. We therefore introduce the change of variables
\begin{equation}\label{var}
x-P=A[P]z,\quad
A[P]=
\begin{pmatrix}
\frac{h}{\sqrt{h^2+r_0^2}}
& 0\\
0
& 1
\end{pmatrix},
\end{equation}
originally introduced in \cite{MR4899797}. This anisotropic scaling normalizes the coefficient matrix $K$ in \eqref{operL} at $P=(r_0,0)$, recasting $L_x$ as $\Delta_z$ plus lower-order corrections in the $z$ variable, thus simplifying the analysis of \eqref{ori}.

This is the content of the next Proposition.
\begin{prop}\label{expansionofL}
Let $z$ be the variable in \eqref{var}. It holds 
\[L_{x}=\Delta_{z}+B_0,\]
where
\begin{align*}
B_{0}
&= \left(\frac{h^2(r_0^2-r^2)+z_2^2(h^2+r_0^2)}{h^2(h^2+r^2)}\right)\partial_{z_1z_1}
+\frac1{(h^2+r^2)}\left(\left(z_1\frac h{\sqrt{h^2+r_0^2}}+r_0\right)^2-r^2\right)\partial_{z_2z_2} \nonumber\\
 &- 2\frac{\sqrt{h^2+r_0^2}}{h(h^2+r^2)}z_2\left(z_1\frac{h}{\sqrt{h^2+r_0^2}}+r_0\right)
\partial_{z_1z_2}
 \nonumber\\
& - \frac{z_1(h^2+r_0^2)+r_0h\sqrt{h^2+r_0^2}}{h^2(h^2+r^2)}\left(1+\frac{2h^2}{h^2+r^2}\right)
\partial_{z_1}
 - \frac{z_2}{h^2+r^2}\left(1+\frac{2h^2}{h^2+r^2}\right)\partial_{z_2},
\end{align*}
and
\begin{equation}\nonumber
r^2=|x|^2=r_0^2+2r_0\frac h{\sqrt{h^2+r_0^2}}z_1 + \frac{h^2}{h^2+r_0^2}z_1^2+z_2^2.
\end{equation}
\end{prop}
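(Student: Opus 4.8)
The plan is to prove Proposition \ref{expansionofL} by a direct but organized change-of-variables computation. First I would fix the notation: write $x = P + A[P] z$ with $A[P]$ as in \eqref{var}, so that $x_1 = r_0 + \tfrac{h}{\sqrt{h^2+r_0^2}} z_1$ and $x_2 = z_2$, and record the Jacobian relations $\partial_{x_1} = \tfrac{\sqrt{h^2+r_0^2}}{h}\partial_{z_1}$, $\partial_{x_2} = \partial_{z_2}$, which are constant since $A[P]$ is constant. Substituting these into $L_x = \sum_{i,j}\partial_{x_i}(K_{ij}\partial_{x_j}\cdot)$ expresses $L_x$ as a second-order operator in $z$ whose coefficients are the entries of $K$ evaluated at $x = x(z)$, rescaled by the appropriate powers of $\tfrac{\sqrt{h^2+r_0^2}}{h}$, plus first-order terms coming from the $z$-derivatives of those coefficients (since $K$ is not constant). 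The key normalization to check at the outset is that $K(P) = I$ after the rescaling: indeed $K(r_0,0) = \mathrm{diag}\!\left(\tfrac{h^2}{h^2+r_0^2},1\right)$, and conjugating by $A[P]$ turns the $(1,1)$-entry into $\tfrac{h^2+r_0^2}{h^2}\cdot\tfrac{h^2}{h^2+r_0^2}=1$, so the principal part is $\Delta_z$ at $z=0$ and $B_0$ collects everything else.

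Next I would carry out the expansion in two blocks. For the second-order part, I compute the three coefficients $\tfrac{h^2+r_0^2}{h^2}\,\tilde K_{11}(z)$, $\tilde K_{22}(z)$, and the cross term $\tfrac{\sqrt{h^2+r_0^2}}{h}\,\tilde K_{12}(z)$, where $\tilde K_{ij}(z) = K_{ij}(x(z))$, and then subtract off the constant principal part: the $\partial_{z_1 z_1}$ coefficient of $B_0$ is $\tfrac{h^2+r_0^2}{h^2}\tilde K_{11} - 1$, the $\partial_{z_2 z_2}$ coefficient is $\tilde K_{22} - 1$, and the $\partial_{z_1 z_2}$ coefficient is $2\,\tfrac{\sqrt{h^2+r_0^2}}{h}\tilde K_{12}$ (with the factor $2$ from symmetry of mixed partials). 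Using the identity for $r^2 = |x|^2$ displayed in the Proposition, which follows immediately from $x_1 = r_0 + \tfrac{h}{\sqrt{h^2+r_0^2}}z_1$ and $x_2 = z_2$, one rewrites $K_{ij}(x) = \tfrac{1}{h^2+r^2}(\dots)$ and simplifies; the stated form of $B_0$ should drop out after routine algebra, noting e.g. that $h^2 + x_2^2 = h^2 + z_2^2$ and $h^2+r^2 - (h^2+z_2^2) = x_1^2$, which produces the $(r_0^2 - r^2) + z_2^2\frac{h^2+r_0^2}{h^2}$-type numerators once everything is put over $h^2(h^2+r^2)$.

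For the first-order part, the terms arise from expanding $\partial_{x_i}(K_{ij}\partial_{x_j}\Psi) = K_{ij}\partial_{x_i x_j}\Psi + (\partial_{x_i}K_{ij})\partial_{x_j}\Psi$ and converting $\partial_{x_i}K_{ij}$ to $z$-derivatives via the chain rule, then summing over $i,j$. Here the structural simplification is that $\div_x K = \left(\partial_{x_1}K_{11} + \partial_{x_2}K_{21},\ \partial_{x_1}K_{12} + \partial_{x_2}K_{22}\right)$ can be computed once and for all from the explicit $K$; one finds it points in the radial direction with a scalar factor involving $(1 + \tfrac{2h^2}{h^2+r^2})/(h^2+r^2)$, which after the rescaling and the substitution $x_1 = r_0 + \tfrac{h}{\sqrt{h^2+r_0^2}}z_1$, $x_2 = z_2$ gives exactly the two first-order coefficients $-\tfrac{z_1(h^2+r_0^2) + r_0 h\sqrt{h^2+r_0^2}}{h^2(h^2+r^2)}\bigl(1 + \tfrac{2h^2}{h^2+r^2}\bigr)$ and $-\tfrac{z_2}{h^2+r^2}\bigl(1 + \tfrac{2h^2}{h^2+r^2}\bigr)$ (the first picking up an extra $\tfrac{\sqrt{h^2+r_0^2}}{h}$ from $\partial_{x_1}=\tfrac{\sqrt{h^2+r_0^2}}{h}\partial_{z_1}$, and $x_1 = \tfrac{h}{\sqrt{h^2+r_0^2}}(z_1\tfrac{h}{\sqrt{h^2+r_0^2}}+r_0)\cdot\tfrac{\sqrt{h^2+r_0^2}}{h}$ rearranged).

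The main obstacle is purely bookkeeping: keeping track of the rescaling factors $\tfrac{\sqrt{h^2+r_0^2}}{h}$ attached to each $\partial_{z_1}$ and making sure the first-order contributions from differentiating the variable coefficients of $K$ are not double-counted or dropped. I would mitigate this by computing $\div_x K$ as a standalone lemma-style identity first, verifying it reduces correctly in the limits $h\to\infty$ (planar, $K\to I$, $B_0\to 0$) and comparing against the analogous expansion in \cite{MR4899797} as a consistency check, and only then assembling $B_0$. No delicate analysis is needed; correctness of the final formula is the only real content.
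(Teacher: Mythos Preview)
Your approach is correct: the direct change-of-variables computation you outline is exactly how this identity is obtained. The paper's own proof, however, consists of a single line citing \cite{MR4899797}, Proposition~2.1, and specializing the point $(a,b)$ there to $(r_0,0)$. So the difference is only one of packaging: you propose to reproduce the calculation in full (which is self-contained and useful if one wants the paper to stand alone), whereas the paper delegates the algebra to the earlier reference. Your organization---separating the second-order block $A[P]^{-1}K(x(z))A[P]^{-1}-I$ from the first-order block $(\div_x K)\cdot A[P]^{-1}\nabla_z$ and computing $\div_x K$ once as a radial vector with scalar factor $-\frac{1}{h^2+r^2}\bigl(1+\tfrac{2h^2}{h^2+r^2}\bigr)$---is clean and will recover the stated $B_0$ after routine simplification, with the sanity checks ($K(P)\to I$ after rescaling, $B_0\to 0$ as $h\to\infty$) you mention providing adequate protection against bookkeeping slips.
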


\begin{proof}
The result follows from setting $(a,b)=(r_0,0)$ in (\cite{MR4899797}, Proposition 2.1).
\end{proof}

Due to Proposition \ref{expansionofL}, we realise that in the region $|z|<\delta$ for a fixed small $\delta > 0$, the operator $L_x$ is a perturbation of the usual Laplace operator in $\R^2$ given by
\begin{equation}\label{oper}
L_x=\Delta_z+B_{0},
\end{equation}
where $B_{0}$ can be expressed as
\be
\begin{aligned}\nonumber
B_0
&= \left(-2\frac{r_0h}{(h^2+r_0^2)^{\frac{3}{2}}}z_1+O(|z|^2)\right)\partial_{z_1z_1}
+\mathcal{O}(|z|^2)\partial_{z_2z_2} -\left(2\frac {r_0}{h\sqrt{h^2+r_0^2}}z_2+\mathcal{O}(|z|^2)\right)\partial_{z_1z_2}  \nonumber \\
&-\left(\frac{r_0}{h\sqrt{h^2+r_0^2}}\left(1+\frac{2h^2}{h^2+r_0^2}\right)+\mathcal{O}(|z|)\right)
\partial_{z_1} -\left(\frac{z_2}{h^2+r_0^2}\Big(1+\frac{2h^2}{h^2+r_0^2}\right)
+\mathcal{O}\left(|z|^2)\right)
\partial_{z_2},
\end{aligned}
\ee
with no constant multiples of second-order derivatives appearing. 

In addition, (\ref{ori}) is now equivalent to
\begin{equation}\label{oriz}
 -(\Delta_z+B_0)\psi = c\delta_{0} \quad\mbox{and}\quad \psi(z)=\Psi(P+A[P]z),
\end{equation}
where $\delta_{0}$ is a Dirac delta centered at the origin, and $A[P]$ is the matrix  associated with the change of variables in \eqref{var}. From this perspective, obtaining an approximate regularisation of the Green's function of the operator  $L_x$ in $\R^2$ leads to constructing a regularisation of the Green's function of the Laplace operator $\Delta_z$ in $\R^2$.
\medskip 

Accordingly, we now introduce the regularising profile associated with $\Delta_z$ in $\R^2$ as follows. For any $\gamma>3$ and $s_{+}=\max\{0,s\}$, consider the semilinear elliptic equation
\begin{equation}\label{limit}
\Delta_z\Gamma + \Gamma_+^\gamma=0 \quad \mbox{on}\ \mathbb{R}^2,
\quad \{\Gamma>0\}= B_1(0),
\end{equation}
which admits a radially symmetric, classical solution of the form
\begin{equation}\label{defGamma}
\begin{aligned}
\Gamma(z)
  =
\left\{\begin{array}{ll}
\nu(|z|)&\mathrm{if~}|z|\leq1\\
\nu'(1)\log|z|&\mathrm{if~}|z|>1
\end{array}
\right.,\\
\end{aligned}
\end{equation}
 where $\nu$ denotes the unique, radial ground state solution of
 \[
 \Delta_z
 \nu+\nu^\gamma =0,\quad \nu\in H_0^1(B_1),\quad \nu>0\,\text{ on }B_1.
 \]
Due to Hopf's lemma we get $\nu'(1)<0$, which will be an important feature for the analysis carried out in Section \ref{Section 3}. Moreover, for any $\ve >0$, it is easy to see that the rescaled profile 
\begin{equation}\label{erescgamma}
\Gamma_{\ve}(z)=\Gamma\left(\frac{z}{\ve}\right)
\end{equation}
is $C^{1}$ on the boundary of the ball $B_{\ve}(0)$ and satisfies the rescaled semilinear elliptic equation
\begin{equation}\label{rescaledequation123}
    \Delta_z\Gamma_{\ve}(z)+\frac{1}{\ve^{2}}\left(\Gamma_{\ve}(z)\right)_+^\gamma=0
\quad  \mbox{in}\ \mathbb{R}^2.
\end{equation}
In what follows, we restrict our attention to the region $|z|<\delta$ for a small $\delta>0$, and define
\begin{equation}\label{defofgammahat}
\hat{\Gamma}_{\ve}(z)=\Gamma\left(\frac{z}{\ve}\right)-\nu'(1)|\log\ve|,
\end{equation}
which satisfies \[-\Delta_z\hat{\Gamma}_{\ve}(z) \rightharpoonup c\delta_0  \quad  \mbox{as} \quad \ve \to 0, \quad \mbox{where} \quad c=\int_{\R^2} \Gamma^{\gamma}_{+}.\]
Motivated by the semilinear equation \eqref{limit}, we use \eqref{defofgammahat} as the fundamental building block and employ elliptic singular perturbation methods to construct an approximate solution of \eqref{oriz}.

The following Proposition holds.

\begin{prop}\label{propapprox}
Given $r_0>0$ and $h>0,$ consider the  profile $\hat{\Gamma}_{\ve}(z)$ in \eqref{defofgammahat} and define the approximate solution of \eqref{oriz}
as \[\psi_{3\ve}(z)= \frac{\alpha}{2}|\log\ve|r_0^2 +\hat{\Gamma}_{\ve}(z)\left(1+c_1z_1+c_2|z|^2\right) +\frac{r_0^3}{2h(h^2+r_0^2)^{\frac{3}{2}}}H_{1\ve}(z),\]
where \[c_1=\frac{r_0h}{2(h^2+r_0^2)^{\frac{3}{2}}}, \quad c_2=\frac{3h^2r_0^2+r_0^4}{8(h^2+r_0^2)^{3}},\]
and \[\Delta_z H_{1\ve} +\frac{\operatorname{Re}(z^3)}{\ve^2|z|^2}\left(\Gamma''\left(\frac{z}{\ve}\right)-\frac{\Gamma'\left(\frac{z}{\ve}\right)}{\frac{z}{\ve}}\right)=0.\]
It the rescaled variable $y=\frac{z}{\ve}$ it holds $\psi_{3\ve}(\ve y_1,\ve y_2)=\psi_{3\ve}(\ve y_1,-\ve y_2),$ while for any small $\delta>0$, in the region $|y| < \frac{\delta}{\ve}$ we have
\[\ve^2L_{x}(\psi_{3\ve})(\ve y)= \Delta_y\Gamma(y) + \frac{3r_0h^2+r_0^3}{2h(h^2+r_0)^{\frac{3}{2}}}\ve y_1\left(\Gamma(y)\right)^{\gamma}_{+} + \ve^2 E_{*},\]
where $E_{*}$ is a smooth function in $z=\ve y$, uniformly bounded as $\ve \to 0.$

In the original variable $x,$ the approximate regularisation can be expressed as \[\Psi_{P}(x)=\psi_{3\ve}\left(A[P]^{-1}\left(x-P\right)\right),\]
where $P=(r_0,0)\in\R^2$ and $A[P]$ is given in \eqref{var}.
\end{prop}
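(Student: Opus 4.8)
The plan is to substitute $\psi_{3\ve}$ into the splitting $L_x=\Delta_z+B_0$ of Proposition~\ref{expansionofL}, use the equations satisfied by $\hat\Gamma_\ve$ and $H_{1\ve}$, pass to the rescaled variable $y=z/\ve$, and verify that the constants $c_1,c_2$ and the PDE defining $H_{1\ve}$ are chosen exactly so that every slowly-decaying or anisotropic term of order $\ve$ — together with the single $|\log\ve|$-sized term of order $\ve^2$ — cancels.

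First I would collect the elementary ingredients. From \eqref{limit}, \eqref{rescaledequation123} and \eqref{defofgammahat} one has $\Delta_z\hat\Gamma_\ve=-\ve^{-2}(\Gamma_\ve)_+^\gamma$ together with the radial identity $\Gamma''(\rho)=-(\Gamma(\rho))_+^\gamma-\rho^{-1}\Gamma'(\rho)$; for any radial $G$, $\partial_{z_iz_j}G=\tfrac12\delta_{ij}\Delta_zG+\frac{z_iz_j-\frac12\delta_{ij}|z|^2}{|z|^2}\big(G''-\tfrac{G'}{|z|}\big)$; and the harmonic decompositions $\tfrac{y_1^3}{|y|^2}=\tfrac34 y_1+\tfrac14\tfrac{\operatorname{Re}(y^3)}{|y|^2}$, $\tfrac{y_1y_2^2}{|y|^2}=\tfrac14\big(y_1-\tfrac{\operatorname{Re}(y^3)}{|y|^2}\big)$. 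I would also check that $H_{1\ve}$ is well defined: its source has pure angular mode $\cos3\theta$ (indeed $\operatorname{Re}(z^3)=|z|^3\cos3\theta$, while $\Gamma''(z/\ve)-\Gamma'(z/\ve)/(z/\ve)$ is radial, equal to $O((|z|/\ve)^2)$ near $0$ and $O((\ve/|z|)^2)$ for $|z|>\ve$), so $H_{1\ve}$ may be taken as the unique mode-$3$ solution that is smooth at the origin and grows at most linearly at infinity; being a $\cos3\theta$-multiple of a radial profile it is even in $z_2$. As $\hat\Gamma_\ve$ and $1+c_1z_1+c_2|z|^2$ are even in $z_2$ as well, the reflection symmetry $\psi_{3\ve}(\ve y_1,\ve y_2)=\psi_{3\ve}(\ve y_1,-\ve y_2)$ follows immediately.

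Then I would carry out the ordered expansion. The constant $\tfrac\alpha2|\log\ve|r_0^2$ is annihilated by $L_x$. With $g=1+c_1z_1+c_2|z|^2$, the product rule and $\Delta_z\hat\Gamma_\ve=-\ve^{-2}(\Gamma_\ve)_+^\gamma$ give $\Delta_z(\hat\Gamma_\ve g)=-\ve^{-2}(\Gamma_\ve)_+^\gamma g+2c_1\partial_{z_1}\hat\Gamma_\ve+4c_2\,z\!\cdot\!\nabla\hat\Gamma_\ve+4c_2\hat\Gamma_\ve$; by \eqref{oper} the only order-$\ve$ contributions of $B_0$ come from $B_0\hat\Gamma_\ve$ through the three coefficients $-2\tfrac{r_0h}{(h^2+r_0^2)^{3/2}}z_1\partial_{z_1z_1}$, $-2\tfrac{r_0}{h\sqrt{h^2+r_0^2}}z_2\partial_{z_1z_2}$, $-\tfrac{r_0}{h\sqrt{h^2+r_0^2}}(1+\tfrac{2h^2}{h^2+r_0^2})\partial_{z_1}$, all other second-order terms of $B_0$ being $O(\ve^2)$ on $\{|y|<\delta/\ve\}$ precisely because, as recorded below \eqref{oper}, $B_0$ has no constant multiples of second derivatives, and because $|\Gamma'(\rho)|\le C(1+\rho)^{-1}$, $|\Gamma''(\rho)|+|\rho^{-1}\Gamma'(\rho)|\le C(1+\rho)^{-2}$, with analogous bounds for $H_{1\ve}$. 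Multiplying by $\ve^2$, setting $z=\ve y$, writing the $\partial_{z_iz_j}\hat\Gamma_\ve$ in radial form and eliminating $\Gamma''$ through $\Gamma''=-(\Gamma)_+^\gamma-|y|^{-1}\Gamma'$, and expanding every cubic monomial into the modes $y_1$ and $\operatorname{Re}(y^3)/|y|^2$, one finds: the order-$1$ term is $-(\Gamma(y))_+^\gamma=\Delta_y\Gamma(y)$; at order $\ve$ there remain linear combinations of $y_1(\Gamma)_+^\gamma$, $\tfrac{\operatorname{Re}(y^3)}{|y|^2}(\Gamma)_+^\gamma$, $\tfrac{\Gamma'(|y|)}{|y|}y_1$ and $\tfrac{\operatorname{Re}(y^3)}{|y|^2}\big(\Gamma''-\tfrac{\Gamma'}{|y|}\big)$, of which the last is exactly $-\ve^2\kappa\Delta_z H_{1\ve}(\ve y)$ for $\kappa=\tfrac{r_0^3}{2h(h^2+r_0^2)^{3/2}}$ and is therefore cancelled by the $H_{1\ve}$-term of $\psi_{3\ve}$, the $\tfrac{\Gamma'}{|y|}y_1$-coefficient vanishes exactly when $c_1=\tfrac{r_0h}{2(h^2+r_0^2)^{3/2}}$, the $\tfrac{\operatorname{Re}(y^3)}{|y|^2}(\Gamma)_+^\gamma$-coefficient vanishes identically, and the $y_1(\Gamma)_+^\gamma$-coefficient sums to $\tfrac{3r_0h^2+r_0^3}{2h(h^2+r_0^2)^{3/2}}$; finally every remaining contribution is $O(\ve^2)$ on $\{|y|<\delta/\ve\}$ except the single $|\log\ve|$-sized term $4c_2\hat\Gamma_\ve(\ve y)$ from $\Delta_z(\hat\Gamma_\ve g)$, which is cancelled by the undifferentiated $\hat\Gamma_\ve$ produced by $B_0(c_1z_1\hat\Gamma_\ve)$ because $c_2=\tfrac{3h^2r_0^2+r_0^4}{8(h^2+r_0^2)^3}$ is equivalent to $4c_2=\tfrac{r_0(3h^2+r_0^2)}{h(h^2+r_0^2)^{3/2}}\,c_1$. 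Collecting what survives defines $E_*$, smooth in $z=\ve y$ and bounded uniformly as $\ve\to0$; the statement in the $x$-variable is then the change of variables $\Psi_P(x)=\psi_{3\ve}(A[P]^{-1}(x-P))$ of \eqref{var}--\eqref{oriz}.

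The hard part will be the order-$\ve$ bookkeeping: keeping the compactly supported terms ($\propto(\Gamma)_+^\gamma$) cleanly separated from the slowly-decaying ones ($\propto\Gamma'$), reducing every angular monomial produced by $B_0\hat\Gamma_\ve$ to the two modes $y_1$ and $\operatorname{Re}(y^3)/|y|^2$, and then checking the scalar identities among $\tfrac{r_0h}{(h^2+r_0^2)^{3/2}}$, $\tfrac{r_0}{h\sqrt{h^2+r_0^2}}$, $c_1$, $c_2$ and $\kappa$ that force the various cancellations. A secondary technical point is to confirm, via the decay of $\Gamma',\Gamma'',H_{1\ve}$ and the absence of constant second-order coefficients in $B_0$, that each remainder contribution is genuinely $O(\ve^2)$ on the whole unbounded range $|y|<\delta/\ve$, and not merely on a fixed ball.
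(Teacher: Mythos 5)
Your proposal is correct and follows essentially the same route as the paper: the splitting $L_x=\Delta_z+B_0$ from Proposition \ref{expansionofL}, the radial-derivative identities for $\hat\Gamma_\ve$, the decomposition of the cubic monomials into the modes $y_1$ and $\operatorname{Re}(y^3)/|y|^2$, and the same three cancellations fixing $c_1$, $c_2$ (via the identity $4c_2=\tfrac{r_0(3h^2+r_0^2)}{h(h^2+r_0^2)^{3/2}}c_1$, which is exactly the paper's relation) and the mode-$3$ corrector $H_{1\ve}$. The only difference is organizational — you expand $\Delta_z(\hat\Gamma_\ve(1+c_1z_1+c_2|z|^2))$ in one pass, whereas the paper builds $\psi_{1\ve},\psi_{2\ve},\psi_{3\ve}$ iteratively, cancelling one error term at a time.
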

\begin{proof}
Let $\delta>0$ be small. We introduce the rescaled variable $y=\frac{z}{\ve},$ consider the region $|y|<\frac {\delta}{\ve},$ and use \eqref{oper} in order to compute
\begin{equation}\label{Bfirsterror}
\begin{aligned}
\ve^2 B_{0}(\ve y)[\hat{\Gamma}_{\ve}]
=& -\frac{2r_0 h}{(h^2+r_0^2)^{\frac 3 2}}\ve y_1\partial_{y_1y_1}\hat{\Gamma}_{\ve}-\frac{2r_0}{h\sqrt{h^2+r_0^2}}\ve y_2\partial_{y_1y_2}\hat{\Gamma}_{\ve}\\
&- \frac{ r_0}{h\sqrt{h^2+r_0^2}}\left(1+\frac{2h^2}{h^2+r_0^2}\right)\ve
\partial_{y_1}\hat{\Gamma}_{\ve}
+\ve^2 E_1,
\end{aligned}
\end{equation}
where $E_1$ is a smooth function in $z=\ve y$, uniformly bounded as  $\ve\to 0$.

Since $\hat\Gamma_{\ve}$ in \eqref{defofgammahat} is radial  in $y$, we then calculate
\begin{equation}\nonumber
\partial_{y_1} \hat\Gamma_{\ve}=\Gamma' \frac{y_1}{|y|},  \quad
\partial_{y_1y_1}\hat\Gamma_{\ve}
= \left(\Gamma''-\frac{\Gamma'}{|y|}\right)\frac{y_1^2}{|y|^2}+\frac{\Gamma'}{|y|}, \quad \partial_{y_1 y_2}\hat\Gamma_{\ve}=\left(\Gamma''-\frac{\Gamma'}{|y|}\right)\frac{y_1 y_2}{|y|^2},
\end{equation}
where $\Gamma'$ designates the radial derivative of $\Gamma(y)=\Gamma_{\ve}(z)$.

 Substituting the above into \eqref{Bfirsterror} and identifying the vector $y=(y_1,y_2)\in \R^2$ with the complex number $y=y_1+iy_2 \in \mathbb{C}$, by virtue of the identities 
 \[y_1y_2^2=\frac{y_1|y|^2}{4}-\frac{\operatorname{Re}(y^3)}{4}, \quad y_1^3=\frac{3y_1|y|^2}{4}+\frac{\operatorname{Re}(y^3)}{4},\] 
 we further find 
\begin{equation}\label{bgamma2}
\begin{aligned}
\ve^2 B_{0}(\ve y)[\hat\Gamma_{\ve}]=&-\frac{r_0h}{(h^2+r_0^2)^{\frac{3}{2}}}\frac{\Gamma'\ve y_1}{|y|}-\frac{r_0^3+4r_0h^2}{2h(h^2+r_0^2)^{\frac{3}{2}}}\left(\Gamma''+\frac{\Gamma'}{|y|}\right)\ve y_1 \\
&+\frac{\ve r_0^3}{2h(h^2+r_0^2)^{\frac{3}{2}}}\left(\Gamma''-\frac{\Gamma'}{|y|}\right) \frac{\operatorname{Re}(y^3)}{|y|^2} +\ve^2 E_1,
\end{aligned}
\end{equation}
with $\operatorname{Re}(y^3)$ denoting the real part of $y^3$ and $E_1$ as in \eqref{Bfirsterror}. 

To proceed, we modify the approximation to eliminate the first error term in \eqref{bgamma2}. Defining \[
\psi_{1\ve}(\ve y)=\frac{\alpha}{2}|\log\ve|r_0^2 + (1+c_1\ve y_1)\hat{\Gamma}_{\ve},\]
we observe that
\[\Delta _{y}(c_1\ve y_1\hat{\Gamma}_{\ve})
   =c_1\ve y_1\Delta_{y}\Gamma +2c_1 \frac{\Gamma'\ve y_1}{|y|},\]
hence choosing
$    c_1=\frac{1}{2}\frac{r_0 h}{(h^2+r_0^2)^{\frac 3 2}}
$
yields
\begin{align*}
\ve^2 L_{x}\left(\psi_{1\ve}\right)
&=\Delta_{y}\Gamma
-\left(\frac{3r_0h^2+r_0^3}{2h(h^2+r_0^2)^{\frac{3}{2}}}\right)
 \left(\Gamma'' +\frac{\Gamma'}{|y|}\right)\ve y_1
+ \frac{\ve r_0^3}{2h(h^2+r_0^2)^{\frac{3}{2}}}
\left(\Gamma'' -\frac{\Gamma'}{|y|}\right)\frac{\operatorname{Re}(y^3) }{ |y|^2}\nonumber \\
&  + \ve^2B_{0}(\ve y)\left[c_1\ve y_1\hat{\Gamma}_{\ve}\right]+\ve^2 E_1.
\end{align*} 
Following a similar reasoning as before, one can show that
\[\ve^2
B_0(\ve y)[c_1\ve y_1 \hat\Gamma_{\ve}]= -\frac{c_1\ve^2 r_0}{h\sqrt{h^2+r_0^2}}\left(1+\frac{2h^2}{h^2+r_0^2}\right)\hat{\Gamma}_{\ve}+\ve^2 E_2,\]
where $E_{2}$ is another smooth function in $\ve y$ which is uniformly bounded as $\ve \to 0,$ while
\begin{equation}\label{jae}
\begin{aligned}
\ve^2 L_{x}\left(\psi_{1\ve}\right)
&=  \Delta_{y}\Gamma
-\frac{3r_0h^2+r_0^3}{2h(h^2+r_0^2)^{\frac{3}{2}}}\left(\Gamma'' +\frac{\Gamma'}{|y|}\right)\ve y_1
-
 \frac{\ve^2 (3h^2r_0^2+r_0^4)}{2(h^2+r_0^2)^3} \hat{\Gamma}_{\ve}
\\
&  + \frac{\ve r_0^3}{2h(h^2+r_0^2)^{\frac{3}{2}}}
\left(\Gamma^{''} -\frac{\Gamma'}{|y|}\right)\frac{\operatorname{Re}(y^3) }{|y|^2}
+\ve^2(E_1+E_2).
\end{aligned}
\end{equation}
The next step for improving the approximation is to eliminate the third error term in \eqref{jae}, which is proportional to $\hat{\Gamma}_{\ve}.$ 
We notice that 
\[
   \Delta_{y}\left(c_2 \ve^2|y|^2\hat\Gamma_{\ve}\right)
=c_2\ve^2\left(4\hat{\Gamma}_{\ve}+4 y\cdot\nabla_{y}\Gamma+|y|^2\Delta_{y}\Gamma\right),\]
thus we adjust the approximation by setting 
\begin{equation*}
\psi_{2\ve}(\ve y)=\frac{\alpha}{2}|\log\ve|r_0^2+\left(1+c_1\ve y_1+c_2\ve^2|y|^2\right)\hat\Gamma_{\ve}, \quad c_2= \frac 1 8 \frac{3h^2r_0^2+r_0^4}{(h^2+r_0^2)^3}.
\end{equation*}
With this modification, we infer that the approximate regularisation $\psi_{2\ve}$ satisfies
\begin{equation}\label{errorsecondapprox}
\begin{aligned}
\ve^2 L_{x}\left(\psi_{2\ve}\right) &=  \Delta_{y}\Gamma
-\frac{3r_0h^2+r_0^3}{2h(h^2+r_0^2)^{\frac 3 2}}
 \left(\Gamma'' +\frac{\Gamma'}{|y|}\right)\ve y_1
  + \frac{\ve r_0^3}{2h(h^2+r_0^2)^{\frac 3 2}} \left(\Gamma'' -\frac{\Gamma'}{|y|}\right)\frac{\operatorname{Re}(y^3)}{|y|^2}\\
  &+\ve^2(E_1 +E_2+E_3),
\end{aligned}
\end{equation}
where $E_3$ has the same properties as $E_1$ and $E_2.$

To cancel the third term in (\ref{errorsecondapprox}), we introduce a radial correction $h_{1\ve}(s)$ defined as the solution of
\[h''_{1\ve}+\frac{1}{s} h'_{1\ve}-\frac{9}{s^2}h_{1\ve} +\frac{s}{\ve^2}\left(\Gamma''\left(\frac{s}{\ve}\right)-\frac{\Gamma'\left(\frac{s}{\ve}\right)}{\frac{s}{\ve}}\right)=0,\]
which can be expressed as
\[
h_{1\ve}(s)
= s^3\int_s^1\frac{\rm dr}{r^7}
\int_0^r \frac{t^5}{\ve^2}\left(\Gamma''\left(\frac{t}{\ve}\right) -\frac{\Gamma'\left(\frac{t}{\ve}\right)}{\frac{t}{\ve}}\right) \rm d t.
\]
We remark that $\tilde{F}_{\ve}(s)\defeq\frac{s}{\ve^2}\left(\Gamma''\left(\frac{s}{\ve}\right)-\frac{\Gamma'\left(\frac{s}{\ve}\right)}{\frac{s}{\ve}}\right)$ is uniformly bounded as $\ve\to 0.$ 
To see this, since $\ve \to 0$ corresponds to $u\defeq \frac{s}{\ve} \to \infty$, using the definition of $\Gamma$ in \eqref{defGamma} one can find that $u\left(\Gamma''(u)-\frac{\Gamma'(u)}{u}\right)=\mathcal{O}\left(\frac{1}{u}\right)$ as $u \to \infty.$ In addition, we observe that $\lim\limits_{u\to 0}\frac{\Gamma'(u)}{u}=\Gamma''(0),$ which in turn yields $\left(\Gamma''(u)-\frac{\Gamma'(u)}{u}\right)=\mathcal{O}(u)$ as $u \to 0.$ Combining these asymptotics, it can be verified that $h_{1\ve}$ is a smooth function in $s$, uniformly bounded as $\ve \to 0$, satisfying $h_{1\ve}(s)=\mathcal{O}(s^3)$ as $s\to 0$. 

Identifying again the vector $(y_1,y_2)\in\R^2$ with $y_1+iy_2 \in\C$ and turning to polar coordinates $y=|y|e^{i\theta}$, the function $H_{1\ve}(z)=h_{\ve}(|z|)\cos(3\theta)$ satisfies the equation
\begin{equation*}
\Delta_{y} H_{1\ve}+\frac {\ve \operatorname{Re}(y^3)}{|y|^2} \left(\Gamma''(y) -\frac{\Gamma'(y)}{|y|}\right)=0,
\end{equation*}
thus we refine the approximation of the stream function as
\begin{equation}\label{psi3}
\psi_{3\ve}(\ve y)=\frac{\alpha}{2}|\log\ve|r_0^2 + \left(1+c_1\ve y_1+c_2\ve^2|y|^2\right)\hat\Gamma_{\ve}
+\frac{r_0^3}{2h(h^2+r_0^2)^{\frac{3}{2}}}H_{1\ve}(\ve y).
\end{equation}
Combining the results above, straightforward computations show that 
\begin{equation}
\begin{aligned}\label{jadse}
\ve^2 L_{x}\left(\psi_{3\ve}\right)= &\Delta_y\Gamma
+\frac{3r_0h^2+r_0^3}{2h(h^2+r_0)^\frac{3}{2}}
 \ve y_1 \Gamma_{+}^{\gamma}
 +\ve^2 E_{*},
\end{aligned}
\end{equation}
where $E_{*}$ is once again a smooth function in $\ve y$, uniformly bounded as $\ve \to 0$. 

Referring to the change of variables in \eqref{var}, we can express the approximate stream function (\ref{psi3})  around $P=(r_0,0)\in\R^2$ as
\begin{equation*}
\Psi_{P}(x)=\psi_{3\ve}\left(A[P]^{-1}(x-P)\right),
\end{equation*}
which concludes the proof.
\end{proof}

\subsection{Globally defined approximate regularisation}
At this point, in order to extend the definition of the approximate stream function of Proposition \ref{propapprox} to the whole $\R^2$, we introduce a smooth cut-off function 
\begin{equation}\label{definitioneta}
\eta_{\delta}(x)=\eta\left(\frac{|z|}{\delta}\right), \quad \text{where} \quad \eta(s)=1 \quad \text{for} \quad |s|\leq \frac{1}{2}, \quad \eta(s)=0 \quad \text{for} \quad |s|\geq 1,
\end{equation}
and consider the modified approximation 
\begin{equation}\label{globalapprox1}
    \tilde{\Psi}_{\alpha}(x)=\frac{\alpha}{2}|\log\ve|r_0^2+\eta_{\delta}(x)\left(\left(1+c_{1}z_{1}+c_{2}|z|^2\right)\hat\Gamma_{\ve}(z)+\frac{r_0^3}{2h(h^2+r_0^2)^{\frac{3}{2}}}H_{1\varepsilon}(z)\right).
\end{equation}
Since the operator $L_{x}$ in \eqref{operL} is linear, it follows from \eqref{psi3} and \eqref{globalapprox1} that
\begin{equation}\label{errorg}
        L_{x}(\tilde{\Psi}_{\alpha}) =\eta_{\delta}(x)\left(L_{x}\left(\psi_{3\ve}\right)-E_{*}\right)+g(x),
\end{equation}
with $E_{*}$ as in \eqref{jadse} and
\[g(x)=\eta_{\delta}(x)E_{*}+L_{x}\left(\eta_{\delta}(x)\left( \psi_{3\ve}-\frac{\alpha}{2}|\log\ve|r_0^2\right)\right)-\eta_{\delta}(x)L_{x}\left(\psi_{3\ve}-\frac{\alpha}{2}|\log\ve|r_0^2\right).\] 
One can verify that $g(x)$ has compact support, thus we readily get  $\|g\|_{L^{\infty}(\R^2)}\leq C_{\delta}$ for a constant depending on $\delta>0$.

The next step towards a global improvement of the approximation \eqref{globalapprox1} is to add a correction $H_{2\ve}(x)$ in the original variable $x$, to cancel the bounded error term $g(x)$ in \eqref{errorg}. To be precise, we let $H_{2\ve}(x)$ satisfy the Poisson equation
\begin{equation}\label{eqforh2e}
L_{x}(H_{2\varepsilon})+g(x)=0 \quad \text{in} \quad \R^2,
\end{equation}
where an application of Proposition \ref{propouter} guarantees a positive solution to \eqref{eqforh2e} satisfying the bound 
\begin{equation}\label{boundh2e}
|H_{2\ve}|\leq C_{\delta}\left(1+|x|^2\right).
\end{equation}
Moreover, as the solution is determined only up to an additive constant, we select the particular one satisfying 
\begin{equation}\label{H2vanishing}
    H_{2\varepsilon}(r_0,0)=0.
\end{equation}
In terms of the variable $z=\ve y$ and \eqref{var}, the final global approximation reads 
\begin{equation}\label{globalapprox}
 \Psi_{\alpha}(x)=\frac{\alpha}{2}|\log\ve|r_0^2+\eta_{\delta}(x)\left(\left(1+c_{1}z_{1}+c_{2}|z|^2\right)\hat\Gamma_{\ve}(z)+\frac{r_0^3}{2h(h^2+r_0^2)^{\frac{3}{2}}}H_{1\ve}(z)\right)+H_{2\ve}(x),
\end{equation}
with $\Psi_{\alpha}$ being even in $x_2$, i.e. $\Psi_{\alpha}(x_1,-x_2)=\Psi_{\alpha}(x_1,x_2).$ This property holds since $\psi_{3\ve}$ in \eqref{psi3} is even in $x_2=\ve y_2$, while $H_{2\ve}(x)$ inherits the same symmetry due to $g(x)$ being even in $x_2$.

Finally, due to Proposition \ref{propapprox}, the action of the operator $L_{x}$ to the final approximation $\Psi_{\alpha}$ in \eqref{globalapprox} is equal to 
\begin{equation}\label{Lerror}
L_{x}(\Psi_\alpha)=\eta\left({\frac{|z|}{\delta}}\right)\left(\Delta_{z}\Gamma_{\ve}+\frac{3r_0h^2+r_0^3}{2h(h^2+r_0^2)^{\frac{3}{2}}}\frac{z_1}{\ve ^2}\left(\Gamma_{\ve}\right)^{\gamma}_{+}\right),
\end{equation}
with $\Gamma_{\ve}(z)$ as in \eqref{erescgamma}.
\section{Choice of Nonlinearity and Error of Approximation}\label{Section 3}
In this section, we select a nonlinearity $f$ and estimate the error when evaluating \eqref{eqtosolve} at $\Psi_{\alpha}$ from \eqref{globalapprox}. We define the error operator
\begin{equation}\label{definitionerroroper}
S(\Psi)(x)=L_{x}(\Psi)+f\left(\Psi-\frac{\alpha}{2}|\log\ve||x|^2\right),
\end{equation}
where exact solutions satisfy $S(\Psi)=0$. As discussed in Section \ref{sub13}, we choose $f$ so that the vorticity $W(x)=f\left(\Psi-\tfrac{\alpha}{2}|\log\ve||x|^2\right)$ concentrates near $P=(r_0,0)$, with $W \rightharpoonup c\delta_P$ as $\ve \to 0$, yielding a 3D helical filament with compactly supported cross-section in $\R^2$.

Motivated by \eqref{rescaledequation123}, we introduce the nonlinearity 
\begin{equation}\label{nonlinearity}
f(s)=\frac{1}{\ve^2}\left(s+\nu'(1)|\log\ve|\right)^{\gamma}_{+},
\end{equation}
where $\gamma >3$ and $s_{+}=\max(s,0)$. One can verify that this nonlinearity has sufficient regularity for the construction of a smooth vorticity, since $s\mapsto s^{\gamma}_{+}\in C^{\gamma-1}$ if $\gamma\in\mathbb{Z}$ and $s\mapsto s^{\gamma}_{+}\in C^{\lfloor{\gamma}\rfloor,\gamma-\lfloor{\gamma}\rfloor}$ otherwise.

In the sequel, for given $r_0>0$, $h>0$, and recalling that $\nu'(1)<0,$ we fix the rotational speed in \eqref{rotansatz} as
\begin{equation}\label{rotspeed}
\alpha=-\frac{\nu'(1)}{2\left(h^2+r_0^2\right)} + \tilde{\alpha},\quad \mbox{where} \quad  \tilde{\alpha}=\mathcal{O}\left(|\log\ve|^{-1}\right).
\end{equation}
The rigorous justification of this particular choice will be carried out in Section \ref{reduced}, while currently our aim is to derive error estimates for $S\left(\Psi_{\alpha}\right)$.

We have the following Proposition.
\begin{prop}\label{errorprop1}
    Consider $r_0>0,\, h>0$, and let $\alpha$ be the rotational speed in \eqref{rotspeed}. For the approximate stream function $\Psi_{\alpha}$ in \eqref{globalapprox} and the function $f$ in \eqref{nonlinearity}, it holds that for sufficiently small $\delta>0$, there exists a constant $C>0$ such that for all $\ve >0$ small, the error function in \eqref{definitionerroroper} satisfies
    \[\ve^2 S\left(\Psi_{\alpha}\right)\leq C\ve|y|\Gamma^{\gamma}_{+},\]
where $y=\frac{z}{\ve}$ and $\operatorname{supp} \Gamma^{\gamma}_{+} \subset B_1 (0).$
\end{prop}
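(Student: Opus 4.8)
The plan is to substitute $\Psi_{\alpha}$ into the error operator $S$ of \eqref{definitionerroroper} and exploit two identities already established: the formula \eqref{Lerror} for $L_{x}(\Psi_{\alpha})$, and the fact from \eqref{rescaledequation123} that $\ve^{2}\Delta_{z}\Gamma_{\ve}=-(\Gamma_{\ve})_{+}^{\gamma}$. Since $\operatorname{supp}(\Gamma_{\ve})_{+}^{\gamma}=B_{\ve}(0)$, which for $\ve$ small sits well inside $\{|z|<\delta/2\}$ where the cut-off in \eqref{Lerror} equals $1$, this turns \eqref{Lerror} into, in the rescaled variable $y=z/\ve$,
\[
\ve^{2}L_{x}(\Psi_{\alpha})=-\bigl(\Gamma(y)\bigr)_{+}^{\gamma}+c_{0}\,\ve y_{1}\bigl(\Gamma(y)\bigr)_{+}^{\gamma},\qquad c_{0}=\frac{3r_{0}h^{2}+r_{0}^{3}}{2h(h^{2}+r_{0}^{2})^{3/2}} .
\]
Writing $\phi\defeq\Psi_{\alpha}-\tfrac{\alpha}{2}|\log\ve||x|^{2}+\nu'(1)|\log\ve|$ and recalling \eqref{nonlinearity}, this gives $\ve^{2}S(\Psi_{\alpha})=-(\Gamma(y))_{+}^{\gamma}+c_{0}\,\ve y_{1}(\Gamma(y))_{+}^{\gamma}+\phi_{+}^{\gamma}$, so everything reduces to a careful expansion of $\phi_{+}^{\gamma}$.

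The first step is to compute $\phi$ explicitly on $\{|z|<\delta/2\}$. Plugging in the definition \eqref{globalapprox} of $\Psi_{\alpha}$, using $\hat\Gamma_{\ve}=\Gamma_{\ve}-\nu'(1)|\log\ve|$ from \eqref{defofgammahat} and the change of variables \eqref{var} (which yields $|x|^{2}-r_{0}^{2}=\tfrac{2r_{0}h}{\sqrt{h^{2}+r_{0}^{2}}}z_{1}+\tfrac{h^{2}}{h^{2}+r_{0}^{2}}z_{1}^{2}+z_{2}^{2}$), one collects the $|\log\ve|$-proportional contributions and finds $\phi=\Gamma_{\ve}(z)\bigl(1+c_{1}z_{1}+c_{2}|z|^{2}\bigr)+R_{0}(x)$. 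The decisive point is a cancellation at order $|\log\ve|$: the two $z_{1}$-linear terms of size $|\log\ve|$ — the term $-c_{1}\nu'(1)|\log\ve|z_{1}$ from expanding $\hat\Gamma_{\ve}$ and the term $-\alpha\tfrac{r_{0}h}{\sqrt{h^{2}+r_{0}^{2}}}|\log\ve|z_{1}$ from $\tfrac{\alpha}{2}|\log\ve|(r_{0}^{2}-|x|^{2})$ — add up to $-\tilde\alpha\tfrac{r_{0}h}{\sqrt{h^{2}+r_{0}^{2}}}|\log\ve|z_{1}$ precisely because $c_{1}=\tfrac{r_{0}h}{2(h^{2}+r_{0}^{2})^{3/2}}$ and $\alpha=-\tfrac{\nu'(1)}{2(h^{2}+r_{0}^{2})}+\tilde\alpha$ from \eqref{rotspeed}; since $\tilde\alpha|\log\ve|=O(1)$, this surviving piece is $O(\ve|y|)$. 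The remaining contributions to $R_{0}$ are quadratic-in-$z$ terms of size $O(|\log\ve||z|^{2})$, the correction $\tfrac{r_{0}^{3}}{2h(h^{2}+r_{0}^{2})^{3/2}}H_{1\ve}(z)=O(|z|^{3})$, and $H_{2\ve}(x)$; the latter, by interior elliptic regularity for $L_{x}H_{2\ve}=0$ near $P$ (recall $g$ is supported away from $P$) together with the normalisation $H_{2\ve}(P)=0$ in \eqref{H2vanishing}, is $O(|z|)$ on a fixed neighbourhood of $P$. Hence $R_{0}=O(\ve|y|)$ on $\{|y|\le 2\}$.

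The final step is to expand $\phi_{+}^{\gamma}$ and collect. On $\{|y|<1\}$ one has $\Gamma(y)>0$ and $\Gamma_{\ve}(z)=\Gamma(y)$, so $\phi=\Gamma(y)+\Gamma(y)(c_{1}\ve y_{1}+c_{2}\ve^{2}|y|^{2})+R_{0}$, and a Taylor expansion of $s\mapsto s_{+}^{\gamma}$ about $\Gamma(y)$ cancels the leading $\Gamma_{+}^{\gamma}$ against the $-\Gamma_{+}^{\gamma}$ coming from $\ve^{2}L_{x}(\Psi_{\alpha})$, leaving $(c_{0}+\gamma c_{1})\ve y_{1}\Gamma_{+}^{\gamma}+\gamma\Gamma_{+}^{\gamma-1}R_{0}+(\text{higher order})$, which one bounds by $C\ve|y|\Gamma_{+}^{\gamma}$. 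On $\{|y|\ge 1\}$ (still inside $\{|z|<\delta/2\}$) the first two terms of $\ve^{2}S(\Psi_{\alpha})$ vanish identically, while $\phi=\Gamma(y)+O(\ve|y|)$ with $\Gamma(y)=\nu'(1)\log|y|\le 0$, so $\phi_{+}$ is supported in a thin annulus $1\le|y|\le 1+O(\ve)$ on which $\phi_{+}=O(\ve)$, producing an $O(\ve^{\gamma})$ contribution of strictly higher order; and outside $\{|z|<\delta/2\}$ one checks, using $\nu'(1)<0$ and $\tfrac{\alpha}{2}r_{0}^{2}+\nu'(1)<0$ for $\delta$ small, that the argument of $f$ tends to $-\infty$ as $\ve\to 0$, so that $f\equiv 0$ and $L_{x}(\Psi_{\alpha})=0$ there by \eqref{Lerror}. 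The main obstacle I anticipate is exactly the bookkeeping in the middle step — verifying that every $|\log\ve|$-order term in $\phi$ cancels down to the admissible $O(\ve|y|)$ remainder, which is what forces the precise form of $\alpha$ in \eqref{rotspeed} — together with controlling the nonlinear remainder $\gamma\Gamma_{+}^{\gamma-1}R_{0}$ near the free boundary $\partial B_{1}$, where $\Gamma_{+}$ degenerates and one must carefully use the smallness (and sign structure) of $R_{0}$ there rather than a crude pointwise estimate.
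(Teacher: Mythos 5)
Your treatment of the core region $\{|y|<1\}$ and of the far field follows the paper's proof essentially verbatim (same use of \eqref{Lerror}, the same cancellation at order $|\log\ve|$ between $-c_1\nu'(1)|\log\ve|z_1$ and $-\alpha\tfrac{r_0h}{\sqrt{h^2+r_0^2}}|\log\ve|z_1$ forced by \eqref{rotspeed}, the same Taylor expansion of $s\mapsto s_+^\gamma$ about $\Gamma(y)$). The genuine gap is in the intermediate region $\{|y|\ge 1\}$. The stated conclusion is $\ve^2 S(\Psi_\alpha)\le C\ve|y|\Gamma_+^\gamma$ with $\operatorname{supp}\Gamma_+^\gamma\subset B_1(0)$, so the right-hand side vanishes \emph{identically} for $|y|>1$: the proposition is asserting that the error is exactly zero there, and this exact vanishing (i.e.\ $\operatorname{supp}S(\Psi_\alpha)(z)\subset B_\ve(0)$) is used crucially later, e.g.\ in Lemma \ref{lemmaout} to kill $(1-\tilde\eta_\delta)S(\Psi_\alpha)$ and $(1-\tilde\eta_\delta)f'$. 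Your argument instead concedes a nonzero contribution of size $O(\ve^\gamma)$ supported in a thin annulus $1\le|y|\le 1+O(\ve)$ and dismisses it as ``of strictly higher order''; that is incompatible with the claimed inequality (any nonzero value at a point with $|y|>1$ violates it) and would also break the compact-support argument downstream. What is actually needed, and what the paper's proof does, is to show that the argument of $(\cdot)_+^\gamma$ is strictly negative throughout $1<|y|<\delta/\ve$: writing it as $\nu'(1)|\log\ve|\,\tilde E(y)$ and checking $\tilde E(y)>0$ for $\delta$ small. The sign comes from the structural identity $\hat\Gamma_\ve+\nu'(1)|\log\ve|=\nu'(1)\log|y|\le 0$ for $|y|\ge1$ (the ``$+\nu'(1)|\log\ve|$'' in \eqref{nonlinearity} exactly cancels the additive constant in \eqref{defofgammahat}), after which the remaining terms must be shown not to flip the sign; a bare estimate $\phi=\Gamma(y)+O(\ve|y|)$ with a sign-indefinite remainder does not close this, precisely because $\Gamma(y)\to0$ as $|y|\downarrow1$.

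A secondary inaccuracy: you justify $H_{2\ve}=O(|z|)$ near $P$ by claiming $L_xH_{2\ve}=0$ there because ``$g$ is supported away from $P$''. That is false: $g$ contains the term $\eta_\delta E_*$, which is supported in $\{|z|<\delta\}$ and does not vanish near $P$. The conclusion survives anyway — $g$ is bounded, so $H_{2\ve}\in C^{1,\beta}$ by Proposition \ref{propouter}, and the normalisation \eqref{H2vanishing} then gives the first-order Taylor expansion \eqref{Expcase1} that the paper uses — but the reasoning as written is wrong.
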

\begin{proof}
To estimate the error of approximation \[\ve^2 S(\Psi_{\alpha})= \ve^2 L_{x}(\Psi_{\alpha})+\left(\Psi_{\alpha}-\frac{\alpha}{2}|\log\ve||x|^2+\nu'(1)|\log\ve|\right)^{\gamma}_{+},\]
it is necessary to consider three different regions in the $y=\frac{z}{\ve}$ variable as follows.
\\
\text{\bf Case 1:} $|y|\leq 1.$
\\
Due to \eqref{definitioneta}, in this region we have $\eta_{\delta}=1,$ thus using \eqref{Lerror} we find
\begin{equation*}
    \begin{aligned}
    \ve^2 S\left(\Psi_{\alpha}\right)&=\Delta_{y}\Gamma+\left(\frac{3r_0 h^2+r_0^3}{2h\left(h^2+r_0^2\right)^{\frac 3 2}}\right)\ve y_1 \Gamma^{\gamma}_{+}\\
&+\Bigg[\frac{\alpha}{2}|\log\ve| r_0^2+\left(\Gamma(y)-\nu'(1)|\log\ve|\right)\left(1+c_1\ve y_1 +c_2 \ve^2 |y|^2\right)+\frac{r_0^3}{2h\left(h^2+r_0^2\right)^{\frac{3}{2}}}H_{1\ve}(\ve y)\\
&\hspace{5mm}+H_{2\ve}(x)-\frac{\alpha}{2}|\log\ve||x|^2+\nu'(1)|\log\ve|\Bigg]^{\gamma}_{+}.
\end{aligned}
\end{equation*}
Using the asymptotic expansions 
\begin{equation}\label{Expcase1}
    H_{2\ve}(x)=H_{2\ve}(P)+\ve\left(A[P]y\right)\cdot\nabla H_{2\ve}(P) +\mathcal{O}\left(\ve^2 |y|^2\right), \quad  H_{1\ve}=\mathcal{O}(\ve^3 |y|^3) \quad \mbox{as} \quad \ve\to 0,
\end{equation}
a Taylor approximation around $\Gamma(y)$ yields 
\begin{equation*}
\begin{aligned}
   \ve^2 S\left(\Psi_{\alpha}\right)&=\Delta_{y}\Gamma + \left(\frac{3r_0h^2+r_0^3}{2h\left(h^2+r_0^2\right)^{\frac{3}{2}}}\right)\ve y_1\Gamma^{\gamma}_{+}+\Gamma^{\gamma}_{+} +\gamma\Gamma^{\gamma-1}_{+}\Bigg[\ve y_1|\log\ve|\left(-c_1\nu'(1)-\frac{\alpha r_0h}{\sqrt{h^2+r_0^2}}\right)\\
   &+\ve y_1\left(c_1\Gamma(y)+\frac{h}{\sqrt{h^2+r_0^2}}\partial_{y_1}H_{2\ve}(P)\right)+\mathcal{O}\left(\ve^2 |\log\ve| |y|^2\right)\Bigg] +\mathcal{O}\left(\gamma(\gamma-1)\Gamma^{\gamma-2}_{+}\ve^2|y|^2\right),
    \end{aligned}
\end{equation*}
where we used \eqref{H2vanishing}, the fact that $H_{2\ve}(x)$ is even in $x_2$ and \(|x|^2=r_0^2+2r_0\frac{h}{\sqrt{h^2+r_0^2}}\ve y_1 +\frac{h^2}{h^2+r_0^2}\ve^2 y_1^2 +\ve^2y_2^2.\)

Employing \eqref{limit} and \eqref{rotspeed}, one can directly verify that 
\begin{equation}\label{errorinner1}
\ve^2 S(\Psi_{\alpha})(\ve y)=\mathcal{O}\left(\ve |y|\Gamma^{\gamma}_{+}\right).
\end{equation}
\text{\bf Case 2:} $1 < |y| < \frac{\delta}{\ve}.$ 
\\
In this intermediate region we have that $\eta_{\delta}(x)\in(0,1]$ and $\Gamma(y)=\nu'(1)\log|y|<0$, thus from \eqref{defGamma} we obtain $L_{x}(\Psi_{\alpha})=0$. Using \eqref{rotspeed} and \eqref{Expcase1}, the error function takes the form 
\begin{equation*}
\begin{aligned}
\ve^2 S\left(\Psi_{\alpha}\right)&=
\Bigg[\eta_{\delta}(\nu'(1)\log(|y|)-\nu'(1)|\log\ve|)\left(1+c_1\ve y_1+c_2\ve^2|y|^2\right)+\ve y_1 \frac{h}{\sqrt{h^2+r_0^2}}\partial_{y_1}H_{2\ve}(P)\\
&\hspace{4mm}+\mathcal{O}(\ve^2|y|^2)-\frac{\alpha \,r_0 h }{\sqrt{h^2+r_0^2}}\ve y_1 |\log\ve|-\frac{\alpha}{2}\ve^2|\log\ve|\left(\frac{h^2}{h^2+r_0^2}y_1^2+y_2^2\right)+\nu'(1)|\log\ve|\Bigg]^{\gamma}_{+}
\\
&=\left(\nu'(1)|\log\ve|\tilde{E}(y)\right)^{\gamma}_{+},
\end{aligned}
\end{equation*}
where 
\begin{align*}
\tilde{E}(y)&=\eta_{\delta}\left(\frac{\log(|y|)}{|\log\ve|}\left(1+c_1\ve y_1 + c_2\ve^2 |y|^2\right)-\left(1+c_1\ve y_1 +c_2\ve^2|y|^2\right)\right)+\mathcal{O}(\delta)\\
&+\mathcal{O}\left(\frac{\delta}{|\log\ve|}\right)+\mathcal{O}\left(\frac{1}{|\log\ve|}\right)+\frac{\ve^2}{4(h^2+r_0^2)}\left(\frac{h^2}{h^2+r_0^2}y_1^2+y_2^2\right) +1.
\end{align*}
Due to the expansion $\frac{\log(|y|)}{|\log\ve|}=1+\mathcal{O}\left(\frac{\log\delta}{|\log\ve|}\right)$, a careful analysis shows that for $\delta>0$ sufficiently small, there exists $\ve_{0}>0$ such that $\tilde{E}(y)> 0$ for all $\ve \in (0,\ve_{0})$. Since $\nu'(1)|\log\ve|<0$, it then follows immediately that 
\begin{equation}\label{errorinner2}
\ve^2 S\left(\Psi_{\alpha}\right)=0, \quad \text{for all} \quad 1<|y|<\frac{\delta}{\ve}.
\end{equation} 
\text{\bf Case 3:} $|y| \geq \frac{\delta}{\ve}.$
\\
In this unbounded region we have $\eta_{\delta}(x)=0$, thus using \eqref{boundh2e} we infer 
\begin{equation}\label{errorouter}
\ve^2 S\left(\Psi_{\alpha}\right)=\left(\frac{\alpha}{2}|\log\ve|r_0^2+H_{2\ve}(x)-\frac{\alpha}{2}|\log\ve||x|^2+\nu'(1)|\log\ve|\right)^{\gamma}_{+}=0,
\end{equation}
since for sufficiently small $\ve>0$ the dominant term is given by $-\frac{\alpha}{2}|\log\ve||x|^2.$

Collecting \eqref{errorinner1},\eqref{errorinner2} and \eqref{errorouter}, we globally write \[\ve^2 S(\Psi_{\alpha})\leq C\ve |y|\Gamma^{\gamma}_{+},\]
which concludes the proof.
\end{proof}

\section{Inner-Outer Gluing Scheme}\label{Section 4}
In this section, we consider the approximate stream function $\Psi_{\alpha}$ in \eqref{globalapprox} satisfying the error estimates of Proposition \ref{errorprop1}, and look for an exact solution $\Psi$ of the equation 
\begin{equation}\label{truesol}
  S\left(\Psi\right)(x)=L_{x}\left(\Psi\right)+f\left(\Psi-\frac{\alpha}{2}|\log\varepsilon||x|^2\right)=0 \quad \text{in} \quad \R^2, \quad f(s)=\frac{1}{\ve^2}\left(s+\nu'(1)|\log\ve|\right)^{\gamma}_{+},
\end{equation}
as a small perturbation around the approximate solution constructed. We recall that the rotational speed is fixed as $\alpha=\frac{-\nu'(1)}{2(h^2+r_0^2)} +\mathcal{O}\left(|\log\ve|^{-1}\right)$ in \eqref{rotspeed}. 

Using the \emph{Inner–Outer gluing scheme}, which is a perturbative method for concentration problems in nonlinear PDEs, we seek a solution as a small perturbation of $\Psi_{\alpha}$ of the form
\begin{equation}\label{perturbphi}
    \Psi=\Psi_{\alpha}+\varphi(x), \quad \varphi(x)=\tilde{\eta}_{\delta}(x)\phi_{in}\left(\frac{z}{\varepsilon}\right)+\phi_{out}(x),
\end{equation}
where 
\begin{equation}\label{etadelta}
\tilde{\eta}_{\delta}(x)=\eta\left(\frac{|\log\ve|^2 |z|}{\delta}\right),
\end{equation}
with $\eta$ defined in \eqref{definitioneta}. Notice that the cut-off $\tilde{\eta}_{\delta}(x)$ considered here is shorter than the one used in \eqref{globalapprox}, with this choice localising the contribution of the  perturbation $\phi_{in}$ in a ball of radius $\frac{\delta}{|\log\ve|^2}$ around the point of concentration.

We emphasise that the specific structure of the perturbation $\varphi(x)$ is fundamental to the gluing scheme. In particular, the perturbation splits into an inner function $\phi_{in}$ in the expanded variable $y=\frac{z}{\ve}$, designed to provide the local correction near the concentration point, as well as a function $\phi_{out}(x)$ in the original variable $x$, which accounts for the cancellation of the far-field error. 

By substituting \eqref{perturbphi} into \eqref{truesol} and linearising with respect to $\varphi$, the problem \eqref{truesol} reads 
\begin{equation*}
    S\left(\Psi_{\alpha} +\varphi\right)=\mathcal{E}_{\alpha}+\mathcal{L}_{\Psi_\alpha}[\varphi]+N_{\Psi_a}[\varphi]=0 \quad \text{in} \quad \R^2,
\end{equation*}
where 
\begin{equation*}
    \begin{aligned}
      &\mathcal{E}_{\alpha}=S(\Psi_{\alpha})
      ,\\
      &\mathcal{L}_{\Psi_{\alpha}}[\varphi]=L_x[\varphi]+f'\left(\Psi_{\alpha} -\frac{\alpha}{2}|\log\ve||x|^2\right)\varphi, 
      \\
      &N_{\Psi_{\alpha}}[\varphi]=f\left(\Psi_{\alpha} -\frac{\alpha}{2}|\log\ve||x|^2+\varphi\right)-f\left(\Psi_{\alpha} -\frac{\alpha}{2}|\log\ve||x|^2\right)-f'\left(\Psi_{\alpha} -\frac{\alpha}{2}|\log\ve||x|^2\right)\varphi. 
      \end{aligned}
\end{equation*}
The problem can be equivalently rearranged as
$$
	\begin{aligned} 
    S\left( \Psi_{\alpha} +\varphi\right) &= 0 \quad \text{in} \quad \R^2, \\
		S\left(\Psi_{\alpha}+\varphi\right)&=\
        		\tilde{\eta}_{\delta} \left[ L_x\left[\phi_{in}\right] +   f'\left(\Psi_{\alpha}-\frac{\alpha}{2}|\log\ve||x|^2\right)\left(\phi_{in} +\phi_{out}\right)  + S \left(\Psi_{\alpha} \right) +  N_{\Psi_\alpha} (\varphi)\right] \\
		& +\left(1-\tilde{\eta}_{\delta}\right)\left[f'\left( \Psi_{\alpha} - \frac{\alpha}{2}|\log\ve||x|^2\right) \phi_{out}  +   S(\Psi_{\alpha}) +  N_{\Psi_{\alpha} } (\varphi) \right]\\
		&+    L_x\left[\phi_{out}\right] + L_x\left[\tilde{\eta}_{\delta} \phi_{in}\right]- \tilde{\eta}_{\delta}  L_x[\phi_{in}].
	\end{aligned}
	$$
We then realise that $\Psi$ in \eqref{perturbphi} is an exact solution of \eqref{truesol} if and only if $(\phi_{in},\phi_{out})$ in the decomposition of $\varphi$ solve the coupled system of equations 
 	\begin{equation}\label{inprob1}
	\begin{aligned}
		L_{x}[\phi_{in}]  + f'\left( \Psi_{\alpha}-\frac{\alpha}{2} |\log \ve| |x|^2\right ) (\phi_{in} + \phi_{out})  + S (\Psi_{\alpha} )+  N_{\Psi_{\alpha}} \left(\varphi\right) \, =\, 0, \quad |z|< \frac{2\delta}{|\log\ve|^2},
	\end{aligned}
 \end{equation}
and
\begin{equation}\label{outprob1}
 \begin{aligned}
L_{x}\left[\phi_{out}\right]&+ \left(1-\tilde{\eta}_{\delta} \right)\left[f'\left( \Psi_{\alpha}-\frac{\alpha}{2} |\log \ve| |x|^2\right) \phi_{out} + S\left(\Psi_{\alpha} \right) + N_{\Psi_{\alpha} } (\varphi) \right]\\
&+L_{x}[\tilde{\eta}_{\delta}\phi_{in}]-\tilde{\eta}_{\delta}L_{x}[\phi_{in}] = 0, \inn \R^2.
\end{aligned} 
\end{equation}
\\
In this context, we refer to \eqref{inprob1} as the {\it Inner problem} and to \eqref{outprob1} as the {\it Outer problem}. 

For our analysis, it is relevant to recast \eqref{inprob1} in the expanded variable $y=\frac{z}{\ve}$, where $|y|<\frac{2\delta}{\ve|\log\ve|^2}.$ According to Proposition \ref{expansionofL}, for any smooth function $\bar{\phi}(z)=\phi(y)$ in this region, one can observe that $\ve^{2} L_{x}\left(\bar{\phi}(\ve y)\right)=\Delta_{y}\bar{\phi} +\tilde{B}_{0}(y)[\bar{\phi}]$, where $\tilde{B}_{0}=\ve^2 B_{0}\left(\ve y\right)$, see \eqref{oper}.

In addition, for $|y|<\frac{2\delta}{\ve|\log\ve|^2}$ we can expand \[\ve^2 f'\left(\Psi_{\alpha}-\frac{\alpha}{2}|\log\ve||x|^2\right)=\gamma\Gamma^{\gamma-1}_{+}+b_0(y),\] 
where 
\begin{equation}\label{defofbzero}
\begin{aligned}
b_0(y)=\gamma(\gamma-1)\Gamma^{\gamma-2}_{+}\Bigg[&\ve|\log\ve|y_1\left(-c_1\nu'(1)-\frac{\alpha r_0 h}{\sqrt{h^2+r_0^2}}\right)+\ve y_1 \left(c_1\Gamma(y)+\frac{h}{\sqrt{h^2+r_0^2}}\partial_{y_1}H_{2\ve}(P)\right)\\&+\mathcal{O}\left(\ve^2|\log\ve||y|^2\right)\Bigg] +\mathcal{O}\left(\ve^2|y|^2 \Gamma^{\gamma-3}_{+}\right),
\end{aligned}
\end{equation}
and $b_0(y_1,y_2)=b_0(y_1,-y_2).$

Arguing in a similar manner as above, the nonlinear quadratic term has the expansion 
\[\mathcal{N}_{\Psi_\alpha}(\varphi)=\ve^2 N_{\Psi_\alpha}(\tilde{\eta}_{\delta}\phi_{in}+\phi_{out})=\left(\gamma\Gamma^{\gamma-1}_{+}+b_0(y)\right)\mathcal{O}\left(\varphi^2\right),\]
hence if we multiply equation \eqref{inprob1} by $\ve^2$, the Inner problem can be expressed as
\begin{equation}\label{inprobexp}
\Delta_{y}\phi_{in}+\gamma\Gamma^{\gamma-1}_{+}\phi_{in}+\bar B[\phi_{in}]+\mathcal{N}_{\Psi_\alpha}\left(\tilde{\eta}_{\delta}\phi_{in}+\phi_{out}\right)+\ve^2 S(\Psi_{\alpha})+\left(\gamma\Gamma^{\gamma-1}_{+}+b_0(y)\right)\phi_{out}=0 \quad\mbox{in}\,\,B_{\rho},
\end{equation}
where $\rho=\frac{2\delta}{\ve|\log\ve|^2}$ and $\bar{B}[\phi_{in}](y)=\ve^2 B_0(\ve y)[\phi_{in}]+b_0(y)\phi_{in}(y)$, as in \eqref{oper} and \eqref{defofbzero}.
\medskip

As far as the Outer problem in \eqref{outprob1} is concerned, to simplify notation we write 
\begin{equation}\label{Poissonouter}
L_{x}\left[\phi_{out}\right]+G_{out}\left(\phi_{in},\phi_{out}\right)= 0 \quad\mbox{in} \quad\R^2,
\end{equation}
where  
\begin{equation*}
\begin{aligned}
G_{out}\left(\phi_{in},\phi_{out}\right)=&\left(1-\tilde{\eta}_{\delta}\right)\left[f'\left(\Psi_{\alpha}-\frac{\alpha}{2}|\log\ve||x|^2\right)\phi_{out}+S\left(\Psi_{\alpha}\right)+N_{\Psi_\alpha}\left(\tilde{\eta}_{\delta}\phi_{in}+\phi_{out}\right)\right]\\
&+L_{x}[\tilde{\eta}_{\delta}\phi_{in}]-\tilde{\eta}_{\delta}L_{x}[\phi_{in}].
\end{aligned}
\end{equation*}
\subsection{Solving the Outer problem}\label{subsecouter}
To address the coupled system of equations in \eqref{inprobexp} and \eqref{Poissonouter}, we begin by solving the Outer Problem in \eqref{Poissonouter}. For $\beta\in(0,1)$, we seek a solution 
\begin{equation}\label{b100}
\phi_{out}\in\mathcal{B}_{100}\coloneq\left\{f \in C^{1,\beta}(\R^2):\|(1+|x|^2)^{-1
}f\|_{\infty}\leq 100\right\},
\end{equation}
where $\mathcal{B}_{100}$ is a closed ball of the space $X\defeq \left\{f\in C^{1,\beta}(\R^2): \|(1+|x|^2)^{-1
}f\|_{\infty}< \infty\right\}.$

Regarding $\phi_{in}$, we assume a priori that in the rescaled variable $y=\frac{z}{\ve}$ it satisfies
\begin{equation}\label{propertiesforphi1}
(1+|y|)|D_y \phi_{in}(y)|+|\phi_{in}(y)| \leq \frac{C\ve}{1+|y|^{\sigma}}, \quad \phi_{in}(y_1,y_2)=\phi_{in}(y_1,-y_2),
\end{equation}
for some $\sigma>0$. These assumptions are motivated by the estimates of $\ve^2 S(\Psi_{\alpha})$ in Proposition \ref{errorprop1} and the fact that $\ve^2 S(\Psi_{\alpha})$ is even in $y_2$.

Under these assumptions, the following lemma holds.
\begin{lemma}\label{lemmaout}
Let $\phi_{out}$ as in \eqref{b100}
and $\phi_{in}$ satisfy \eqref{propertiesforphi1}. For  $\delta>0$ sufficiently small and the cut-off functions $\eta_{\delta}, \,\tilde{\eta}_{\delta}$ defined in \eqref{definitioneta} and \eqref{etadelta}, there exists a small $\ve_0 >0$ such that for all $\ve \in (0,\ve_0)$ it holds \[G_{out}\left(\phi_{in},\phi_{out}\right)=L_{x}\left[\tilde{\eta}_{\delta}\phi_{in}\right]-\tilde{\eta}_{\delta}L_{x}\left[\phi_{in}\right].\]
\end{lemma}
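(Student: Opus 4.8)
The goal is to show that all the terms in $G_{out}(\phi_{in},\phi_{out})$ except the commutator $L_x[\tilde\eta_\delta\phi_{in}]-\tilde\eta_\delta L_x[\phi_{in}]$ vanish identically for $\delta$ small and $\ve$ small. Inspecting the definition of $G_{out}$, the remaining terms are all multiplied by $(1-\tilde\eta_\delta)$, so they are supported in the region $|z| \geq \frac{\delta}{|\log\ve|^2}$, i.e.\ in the rescaled variable $y = z/\ve$ where $|y| \geq \frac{\delta}{\ve|\log\ve|^2}$. The plan is to show that on this region each of the three terms inside the bracket $\bigl[f'(\cdots)\phi_{out} + S(\Psi_\alpha) + N_{\Psi_\alpha}(\tilde\eta_\delta\phi_{in}+\phi_{out})\bigr]$ is actually zero, not merely small.

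First I would handle $S(\Psi_\alpha)$: by Proposition~\ref{errorprop1}, or more precisely by equations \eqref{errorinner2} and \eqref{errorouter} in its proof, $S(\Psi_\alpha) = 0$ whenever $|y| > 1$. Since $\frac{\delta}{\ve|\log\ve|^2} > 1$ for $\ve$ small, the support of $(1-\tilde\eta_\delta)$ lies entirely in $\{|y| > 1\}$, so the $S(\Psi_\alpha)$ contribution drops out. Next, the terms $f'(\Psi_\alpha - \tfrac{\alpha}{2}|\log\ve||x|^2)\phi_{out}$ and $N_{\Psi_\alpha}(\tilde\eta_\delta\phi_{in}+\phi_{out})$: recalling $f(s) = \ve^{-2}(s+\nu'(1)|\log\ve|)_+^\gamma$, both $f'$ and $N_{\Psi_\alpha}$ vanish wherever the argument $\Psi_\alpha - \tfrac{\alpha}{2}|\log\ve||x|^2 + \nu'(1)|\log\ve|$ is $\leq 0$ --- with the caveat for $N_{\Psi_\alpha}$ that one needs the perturbed argument $\Psi_\alpha - \tfrac{\alpha}{2}|\log\ve||x|^2 + \varphi + \nu'(1)|\log\ve|$ to also remain $\leq 0$. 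So I would show: on $\{|y| \geq \frac{\delta}{\ve|\log\ve|^2}\}$, both of these arguments are strictly negative for $\ve$ small. This is exactly the computation already performed in Case~2 and Case~3 of Proposition~\ref{errorprop1}: the quantity $\nu'(1)|\log\ve|\tilde E(y)$ appearing there satisfies $\tilde E(y) > 0$ and $\nu'(1) < 0$, so the unperturbed argument is negative, with room to spare (it is of order $|\log\ve|$ in magnitude). Then I would absorb the perturbation: using the a priori bounds $|\phi_{in}| \leq C\ve$ from \eqref{propertiesforphi1} and $|\phi_{out}| \leq 100(1+|x|^2)$ from \eqref{b100}, one checks that $\varphi$ is dominated by the negative quantity $-\frac{\alpha}{2}|\log\ve||x|^2$ (for $|y|$ large, hence $|x|$ bounded below, the $|x|^2$ term wins over $|\phi_{out}|$ once one also uses the cut-off structure localizing $\phi_{out}$'s relevant size), so the full argument stays $\leq 0$ and both $f'(\cdots)\phi_{out}$ and $N_{\Psi_\alpha}(\varphi)$ vanish there.

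The main obstacle I anticipate is the $\phi_{out}$-dependent terms in the region where $|x|$ is not yet large: near $|y| \approx \frac{\delta}{\ve|\log\ve|^2}$ one has $|x| \approx \frac{\delta}{|\log\ve|^2}$, which is small, so the competing negative term $-\frac{\alpha}{2}|\log\ve||x|^2 \approx -\frac{\alpha\delta^2}{2|\log\ve|^3}$ is tiny, while $|\phi_{out}| \leq 100(1+|x|^2)$ could be of order $1$. The resolution is that in this transition annulus the dominant contribution to $\Psi_\alpha - \tfrac{\alpha}{2}|\log\ve||x|^2 + \nu'(1)|\log\ve|$ is still the $\eta_\delta$-weighted $\hat\Gamma_\ve$ term, which near $|y| = \delta/(\ve|\log\ve|^2)$ equals $\nu'(1)(\log|y| - |\log\ve|) = \nu'(1)\log(\delta|\log\ve|^{-2}) < 0$ and is of order $|\log\ve|$ in absolute value --- so again the negative part has size $\gtrsim |\log\ve| \gg 1 \geq |\phi_{out}|$-order-one contribution. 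Carrying this out carefully, splitting into the subregion where $\eta_\delta$ is still active versus where $\eta_\delta = 0$ (so that $|x| \gtrsim \delta$ and the $|x|^2$ term alone suffices), yields that the bracketed expression vanishes on all of $\operatorname{supp}(1-\tilde\eta_\delta)$, leaving only the commutator term, which is the claim.
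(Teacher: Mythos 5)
Your overall strategy coincides with the paper's: on $\operatorname{supp}(1-\tilde\eta_\delta)$ one shows that $S(\Psi_\alpha)$ vanishes (its support in $z$ is contained in $B_\ve(0)$, by Cases 2 and 3 of Proposition \ref{errorprop1}), that $f'(\Psi_\alpha-\tfrac{\alpha}{2}|\log\ve||x|^2)$ vanishes for the same reason, and that $N_{\Psi_\alpha}(\varphi)=f(\Psi_\alpha-\tfrac{\alpha}{2}|\log\ve||x|^2+\varphi)$ vanishes because the shifted argument stays strictly negative after adding $\varphi$, using the a priori bounds \eqref{b100} and \eqref{propertiesforphi1}. The paper carries this out by the same three-region case analysis you sketch. (A small bookkeeping point: $1-\tilde\eta_\delta$ is supported on $|z|>\tfrac{\delta}{2|\log\ve|^2}$, not $|z|\ge\tfrac{\delta}{|\log\ve|^2}$; the paper's first case is exactly the annulus $\tfrac{\delta}{2|\log\ve|^2}<|z|\le\tfrac{\delta}{|\log\ve|^2}$, where $\tilde\eta_\delta\phi_{in}$ is still present and is controlled by $|\phi_{in}|\le C\ve$.)

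However, your resolution of the difficulty you yourself single out --- the transition annulus where $-\tfrac{\alpha}{2}|\log\ve|\left(|x|^2-r_0^2\right)$ is too small to help --- contains a sign and size error. You claim that the $\eta_\delta$-weighted term $\hat\Gamma_\ve=\nu'(1)\left(\log|y|-|\log\ve|\right)=\nu'(1)\log\left(\delta|\log\ve|^{-2}\right)$ is negative and of order $|\log\ve|$. Since $\delta|\log\ve|^{-2}<1$ and $\nu'(1)<0$, this quantity is in fact \emph{positive}, and its magnitude is only $O(\log|\log\ve|)$; as written, the term you point to cannot supply the negativity you need, so the step fails. What actually makes the argument of $(\cdot)_+^{\gamma}$ negative there is the shift $+\nu'(1)|\log\ve|$ built into $f$ in \eqref{nonlinearity} --- which you state correctly at the outset but drop in this computation: $\hat\Gamma_\ve+\nu'(1)|\log\ve|=\nu'(1)\log|y|=\Gamma(y)$, which at $|y|\approx\delta\,\ve^{-1}|\log\ve|^{-2}$ equals $\nu'(1)|\log\ve|\left(1+o(1)\right)<0$ and is genuinely of order $|\log\ve|$, dominating $\tilde\eta_\delta\phi_{in}$, $\phi_{out}$ and $H_{2\ve}$. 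With that correction (and the analogous bookkeeping of $\tfrac{\alpha}{2}|\log\ve|r_0^2$ against $\tfrac{\alpha}{2}|\log\ve||x|^2$ once $\eta_\delta<1$), your argument closes and reproduces the paper's proof.
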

\begin{proof}
Since  $\left(1-\tilde{\eta}_{\delta}\right)\neq 0$ if and only if $ |z|>\frac{\delta}{2|\log\ve|^2}$, we employ the estimate $\ve^2 S(\Psi_\alpha) \leq C\ve |y| \Gamma^{\gamma}_{+}$ of Proposition \ref{errorprop1} to deduce  that \[(1-\tilde{\eta}_{\delta})S(\Psi_\alpha)=0,\quad (1-\tilde{\eta}_{\delta})f'\left(\Psi_\alpha - \frac{\alpha}{2}|\log\ve||x|^2\right)=0,\] since $\operatorname{supp} S\left(\Psi_{\alpha}\right)(z) \subset B_{\ve}(0)$.
It remains to examine the term $(1-\tilde{\eta}_{\delta})N_{\Psi_\alpha}\left(\tilde{\eta}_{\delta}\phi_{in}+\phi_{out}\right)$, where we distinguish between three separate regions in the $z$ variable, naturally suggested by the cut-offs $\tilde{\eta}_{\delta}$ and $\eta_{\delta}$. 
\\
\text{\bf Case 1}:  $\frac{\delta}{2|\log\ve|^2}< |z|\leq \frac{\delta}{|\log\ve|^2}$.
\\
In this region we have $\eta_{\delta}=1$ and $ \tilde{\eta}_{\delta}\neq 0$, thus using \eqref{rotspeed}, \eqref{Expcase1} and the variable $y=\frac{z}{\ve}$, we get
\begin{equation*}
\begin{aligned}
&(1-\tilde{\eta}_{\delta})N_{\Psi_\alpha}=\left(1-\tilde{\eta}_{\delta}\right)f\left(\Psi_{\alpha}-\frac{\alpha}{2}|\log\ve||x|^2+\tilde{\eta}_{\delta}\phi_{in}+\phi_{out}\right)\\
&=\frac{1-\tilde{\eta}_{\delta}}{\ve^2}\Bigg[\left(\nu'(1)\log(|y|)-\nu'(1)|\log\ve|\right)(1+c_1\ve y_1 +c_2\ve^2|y|^2) +\frac{h}{\sqrt{h^2+r_0^2}}\ve y_1\partial_{y_1}H_{2\ve}(P)+\mathcal{O}(\ve^2|y|^2)\\
&\hspace{15mm}+\frac{\nu'(1) r_0 h }{2(h^2+r_0^2)^{\frac 3 2}}\ve y_1 |\log\ve|+\mathcal{O}(\ve |y|)+\frac{\nu'(1)\ve^2|\log\ve|}{4(h^2+r_0^2)}\left(\frac{h^2}{h^2+r_0^2}y_1^2+y_2^2\right)+\tilde{\eta}_{\delta}\phi_{in}(y)+\phi_{out}(x)\Bigg]^{\gamma}_{+}.
\end{aligned}
\end{equation*}
Using \eqref{b100} and \eqref{propertiesforphi1}, we then have
\begin{equation*}
\begin{aligned}
    (1-\tilde{\eta}_{\delta})N_{\Psi_{\alpha}}=\frac{1}{\ve^2}&\Bigg[\nu'(1)|\log\ve|\Bigg(\left(\frac{\log(|y|)}{|\log\ve|}-1\right)(1+c_1\ve y_1+c_2\ve^2|y|^2)\\
    &+\mathcal{O}\left(\frac{\delta}{|\log\ve|}\right)
    +\mathcal{O}(\delta)+\mathcal{O}(\ve^{1+\sigma}|\log\ve|^{2\sigma-1})+\mathcal{O}\left(\frac{1+|x|^2}{|\log\ve|}\right)\Bigg)\Bigg]^{\gamma}_{+}.
\end{aligned}
\end{equation*}
Similarly to the proof of Proposition \ref{errorprop1}, choosing $\delta>0$ sufficiently small yields $(1-\tilde{\eta}_{\delta})N_{\Psi_\alpha}=0$ for all $\ve>0$ small. 
\\
\text{\bf Case 2}: $\frac{\delta}{|\log\ve|^2} < |z| <\delta$. 
\\
In this intermediate region it holds $\eta_\delta \neq 0$ and $\tilde{\eta}_{\delta}=0$, so we instead find
\begin{equation}\nonumber
\begin{aligned}
    &(1-\tilde{\eta}_{\delta})N_{\Psi_\alpha}=f\left(\Psi_{\alpha}-\frac{\alpha}{2}|\log\ve||x|^2+\phi_{out}\right)\\
    &=\frac{1}{\ve^2}\Bigg[\nu'(1)|\log\ve|\Bigg(\frac{r_0^2}{2(h^2+r_0^2)}+\eta_{\delta}\left(1+c_1 \ve y_1 +c_2 \ve^2 |y|^2\right)\left(\frac{\log(|y|)}{|\log\ve|}-1\right) \\
    &\hspace{9mm}+\mathcal{O}\left(\frac{\delta}{|\log\ve|}\right)+\mathcal{O}(\delta)+\frac{\ve^2}{4(h^2+r_0^2)}\left(\frac{h^2}{h^2+r_0^2}y_1^2+y_2^2\right)+\mathcal{O}\left(\frac{1+|x|^2}{|\log\ve|}\right)+1\Bigg)\Bigg]^{\gamma}_{+}.
\end{aligned}
\end{equation}
 Using an analogous argument as before, we get that for sufficiently small $\delta>0$ it holds $(1-\tilde{\eta}_{\delta})N_{\Psi_\alpha}=0$ for all $\ve>0$ small.
\\
\text{\bf Case 3:} $|z| \geq \delta.$
\\
In this unbounded region we have $\eta_{\delta}=0$ and $\tilde{\eta}_{\delta}=0,$
thus using \eqref{b100} and \eqref{boundh2e} yields
\[\left(1-\tilde{\eta}_{\delta}\right)N_{\Psi_{\alpha}}=\frac{1}{\ve^2}\left(\frac{\alpha}{2}|\log\ve|r_0^2+H_{2\ve}(x)-\frac{\alpha}{2}|\log\ve||x|^2+\phi_{out}(x)\right)^{\gamma}_{+}=0,\]
since for sufficiently small $\ve>0$ the dominant term is $-\frac{\alpha}{2}|\log\ve||x|^2<0$.
\end{proof}
\medskip

A direct Corollary of Lemma \ref{lemmaout} is that if $\phi_{out}$ and $\phi_{in}$ satisfy \eqref{b100} and \eqref{propertiesforphi1} respectively, then the Outer problem \eqref{Poissonouter} reduces to 
\begin{equation}\label{poisreduced}
L_{x}[\phi_{out}]+L_{x}[\tilde{\eta}_{\delta}\phi_{in}]-\tilde{\eta}_{\delta}L_{x}[\phi_{in}]=0 \quad \mbox{in} \quad \R^2.
\end{equation}
In other words, choosing a priori the appropriate topologies for $\phi_{in}$ and $\phi_{out}$ leads to a significant simplification, as the structure of the Poisson equation \eqref{poisreduced} allows to use directly Proposition \ref{propouter} to establish the existence of a solution, without invoking a fixed point scheme. In particular, since the term $L_{x}[\tilde{\eta}_{\delta}\phi_{in}]-\tilde{\eta}_{\delta}L_{x}[\phi_{in}]$ is independent of $\phi_{out}$, the previous conclusion holds provided that this term has sufficiently fast decay at infinity.

To verify this, a substitution in \eqref{oper} gives 
\begin{equation}\label{outeroperatorphiin}
    \begin{aligned}
        L_{x}\left[\tilde{\eta}_{\delta}\phi_{in}\right]-\tilde{\eta}_{\delta} L_{x}\left[\phi_{in}\right]&=
\nabla_{z}\eta\left(\frac{|\log\ve|^2|z|}{\delta}\right)\cdot\nabla_{z}\phi_{in}\left(\frac{z}{\ve}\right)+\eta\left(\frac{|\log\ve|^2|z|}{\delta}\right)\Delta_{z}\phi_{in}\left(\frac{z}{\ve}\right)\\
 &+\phi_{in}\left(\frac{z}{\ve}\right)\Delta_{z}\eta\left(\frac{|\log\ve|^2|z|}{\delta}\right)+B_{0}\left[\eta\left(\frac{|\log\ve|^2|z|}{\delta}\right)\phi_{in}\left(\frac{z}{\ve}\right)\right]\\
        &-\eta\left(\frac{|\log\ve|^2|z|}{\delta}\right)B_0\left[\phi_{in}\left(\frac{z}{\ve}\right)\right].
    \end{aligned}
\end{equation}
One can deduce that $L_{x}[\tilde{\eta}_{\delta}\phi_{in}]-\tilde{\eta}_{\delta}L_{x}[\phi_{in}]$
is compactly supported, hence there exists a constant  $\bar\nu >2$ such that \[\big|L_{x}[\tilde{\eta}_{\delta}\phi_{in}]-\tilde{\eta}_{\delta}L_{x}[\phi_{in}]\big| < \frac{C}{1+|x|^{\bar\nu}}.\] Furthermore, for $\sigma>0$ as in \eqref{propertiesforphi1}, direct computations yield \[\sup\limits_{x\in\R^2}\big|L_{x}\left[\tilde{\eta}_{\delta}\phi_{in}\right]-\tilde{\eta}_{\delta} L_{x}\left[\phi_{in}\right]\big| <C\ve^{1+\sigma^{*}},\]
for some arbitrarily small $0<\sigma^{*}<\sigma$. For instance, using \eqref{propertiesforphi1} and the variable $y=\frac z \ve$, we can estimate
\begin{equation}\nonumber
\begin{aligned}
\Bigg|\nabla_{z}\eta\left(\frac{|\log\ve|^2|z|}{\delta}\right)\cdot\nabla_{z}\phi_{in}\left(\frac{z}{\ve}\right)\Bigg|\leq \frac{C|\log\ve|^2
}{\ve}\eta'\left(\frac{|\log\ve|^2|z|}{\delta}\right)\nabla_{y}\phi_{in}\leq o(1) \frac{\ve ^{1+\sigma^{*}}}{1+|x|^{\bar{\nu}}},
\end{aligned}
\end{equation}
for $o(1)\to 0$ as $\ve \to 0,\, \bar{\nu}>2$ and $0<\sigma^{*}<\sigma$ arbitrarily small, with similar estimates for the remaining terms in \eqref{outeroperatorphiin}.

As a result, Proposition \ref{propouter} establishes the existence of a solution $\phi_{out}\in C^{1,\beta}(\R^2)$ for any $0<\beta<1$ to the Poisson equation \eqref{poisreduced}, with the bound 
\begin{equation}\label{boundphiout}
|\phi_{out}|\leq C \ve^{1+\sigma^{*}}\left(1+|x|^2\right).
\end{equation}
Finally, since the solution is given up to a constant, we choose 
\begin{equation}\label{phioutP}
\phi_{out}(r_0,0)=0.
\end{equation}
As it will be made evident in Proposition \ref{propsec7}, the property \eqref{phioutP} for $\phi_{out}$ is crucial for establishing existence and uniqueness of a solution of the Inner–Outer system in \eqref{inprob1} and \eqref{poisreduced}, as it yields sufficient decoupling of the equations and allows to employ a fixed point argument.
\section{Inner and Outer Linear Theories}\label{Section 5}
\subsection{Linear Outer Theory}
Recalling the elliptic differential operator in divergence form $L_x$ defined in \eqref{operL}, we consider the Poisson equation  
	\begin{equation}
	\label{outerprob}
	L_x[\phi_{out}]  +  g(x) = 0  \inn \R^2 ,
	\end{equation}
	where $g$ is a bounded function satisfying the decay condition
	$$
    \|g\|_{\bar{\nu}}\, :=\, \sup_{x\in \R^2}\left(1+|x|^{\bar{\nu}}\right)|g(x)|< +\infty ,
    $$
	for some $\bar{\nu} >2$. 
    
 The following Proposition holds.
\begin{prop}\label{propouter}
		There exists a solution $\phi_{out}(x)$ to \eqref{outerprob} belonging to the class $C^{1,\beta}(\R^2)$ for any $0<\beta<1$,  which defines a linear operator $\phi_{out}= {\mathcal T}^o (g) $ of $g$
		and satisfies the bound
		\begin{equation*}
		|\phi_{out}(x)| \,\le \,C\| g\|_{\bar{\nu}} (1+ |x|^2),
		\end{equation*}
		for a constant $C>0$.
	\end{prop}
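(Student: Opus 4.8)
The plan is to solve the Poisson-type equation $L_x[\phi_{out}] + g = 0$ by transferring it to a standard Laplace equation via the change of variables $x - P = A[P]z$ used throughout Section \ref{Section 2}, together with the local expansion $L_x = \Delta_z + B_0$ from Proposition \ref{expansionofL}. However, since $B_0$ degenerates (its coefficients grow) for large $|x|$, a single global change of variables will not fully diagonalize $L_x$; instead I would exploit the explicit structure of $K$ in \eqref{operL}. Observe that $K$ is uniformly elliptic on any bounded set but its smallest eigenvalue behaves like $h^2/(h^2+|x|^2)$, so $L_x$ is a uniformly elliptic operator with bounded measurable (indeed smooth) coefficients on each ball $B_R$, degenerating only at infinity. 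The cleanest route is: (i) solve the equation on large balls $B_R$ with zero boundary data, obtaining $\phi_R$; (ii) derive an a priori bound $|\phi_R(x)| \le C\|g\|_{\bar\nu}(1+|x|^2)$ uniform in $R$; (iii) pass to the limit $R \to \infty$ using elliptic estimates to extract $\phi_{out}$.

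The key step is the a priori bound, which I would obtain by constructing an explicit barrier. Because $\bar\nu > 2$, the forcing $g$ is integrable against the relevant Green's kernel, and one expects $\phi_{out}$ to grow no faster than quadratically. Concretely, I would look for a supersolution of the form $\bar\phi(x) = M\|g\|_{\bar\nu}(1 + |x|^2)$ and check, using $L_x[|x|^2] = \nabla_z\cdot(K\nabla_z|x|^2)$ — which a direct computation with the explicit $K$ shows equals a bounded function plus $\mathrm{tr}(K)\sim 2 + O(|x|^{-2})\cdot\text{const}$, in any case $L_x[1+|x|^2]$ is bounded below by a positive constant for $|x|$ large — that $-L_x[\bar\phi] \ge |g|$ once $M$ is chosen large. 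Near the origin $1+|x|^2 \ge 1$ so any bounded contribution to $L_x[\bar\phi]$ can still be dominated. Then the comparison principle for the uniformly elliptic operator $L_x$ on $B_R$ (valid since $K$ is positive definite there) gives $|\phi_R| \le \bar\phi$ on $B_R$, uniformly in $R$. The regularity $\phi_{out} \in C^{1,\beta}$ follows from interior Schauder estimates applied on each fixed ball, since the coefficients of $K$ are smooth; linearity and uniqueness of the limit (again by the maximum principle applied to differences with controlled growth) make $\phi_{out} = \mathcal{T}^o(g)$ a well-defined linear operator.

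The main obstacle I anticipate is ensuring the comparison/maximum principle applies and that the limiting solution is unique within the quadratic-growth class — the operator $L_x$ is only degenerate elliptic at infinity, so one must be careful that the barrier $M\|g\|_{\bar\nu}(1+|x|^2)$ genuinely dominates, i.e. that $L_x[1+|x|^2]$ does not vanish or change sign at infinity in a way that destroys the supersolution property. A short explicit calculation of $\nabla_z \cdot (K \nabla_z |x|^2)$ with the given $K$ should settle this; I expect the leading behaviour to be a strictly positive constant (essentially because $K$ acts like the identity in the radial direction after the change of variables, up to lower-order terms), which is exactly what makes the quadratic barrier work and pins down $\bar\nu > 2$ as the natural threshold. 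A secondary technical point is passing to the limit $R\to\infty$: one uses the uniform bound plus interior elliptic estimates on an exhausting sequence of balls and a diagonal argument to get local $C^{1,\beta}$ convergence to a solution on all of $\R^2$ satisfying the stated bound.
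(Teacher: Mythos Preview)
Your overall plan (solve on $B_R$, get uniform a priori bounds via a barrier, pass to the limit) is the natural one, and the paper itself simply cites an external reference for this result, so there is no explicit argument to compare with.  However, the barrier you propose does not work, and this is not a minor technicality.

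The key issue is the computation of $L_x[1+|x|^2]$.  You write that it should be close to $2\operatorname{tr}(K)\sim 2$ and hence bounded below by a positive constant at infinity.  But the contribution from the derivatives of $K$ cancels that constant term completely.  Using that $K\nabla|x|^2 = 2Kx = \dfrac{2h^2}{h^2+|x|^2}\,x$ (the radial eigenvalue of $K$ is $\tfrac{h^2}{h^2+|x|^2}$), one gets directly
\[
L_x[1+|x|^2]=\nabla\cdot\Big(\frac{2h^2}{h^2+|x|^2}\,x\Big)=\frac{4h^4}{(h^2+|x|^2)^2}.
\]
This is \emph{positive} and decays like $|x|^{-4}$.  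Hence $-L_x[\bar\phi] = -M\|g\|_{\bar\nu}\,\tfrac{4h^4}{(h^2+|x|^2)^2}<0$, and the inequality $-L_x[\bar\phi]\ge |g|$ that you need for the comparison principle fails for every $M>0$.  Even if you tried to match decay rates, for $\bar\nu\in(2,4)$ the forcing $|g|\lesssim |x|^{-\bar\nu}$ decays \emph{slower} than $|x|^{-4}$, so no multiple of $1+|x|^2$ can serve as a supersolution.  The degeneracy of $K$ in the radial direction is precisely what kills the naive quadratic barrier; the worry you flagged in your last paragraph is in fact fatal.

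A route that does work (and is consistent with the cited reference and with the angular Fourier decomposition used in Lemma~\ref{lemat}) is to pass to polar coordinates, where $L_x$ becomes $\tfrac{h^2}{r}\partial_r\big(\tfrac{r}{h^2+r^2}\partial_r\big)+\tfrac{1}{r^2}\partial_{\theta\theta}$, decompose $g$ and $\phi_{out}$ into Fourier modes $e^{ik\theta}$, and solve the resulting ODEs explicitly via variation of parameters.  For the zero mode one integrates twice and uses $\bar\nu>2$ to see that $\int_0^\infty s|g_0(s)|\,ds<\infty$, giving $|\phi_{out,0}'(r)|\lesssim \|g\|_{\bar\nu}\tfrac{h^2+r^2}{r}$ and hence quadratic growth; for $|k|\ge 1$ the additional potential $-k^2/r^2$ produces decay and the modes are bounded.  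Summing gives a linear operator $\mathcal T^o$ with the stated estimate.  Alternatively, one can repair the barrier method by building an explicit radial supersolution from the mode-zero integration (i.e.\ solving $L_x\bar\phi = -\|g\|_{\bar\nu}(1+|x|^{\bar\nu})^{-1}$ radially), which then genuinely satisfies $|\bar\phi|\le C\|g\|_{\bar\nu}(1+|x|^2)$.
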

\begin{proof}
See (\cite{MR4417384}, Proposition 7.1).
\end{proof}

\subsection{Linear Inner Theory}
In this section, for $y=\frac{z}{\ve}$ and $|h(y)|\leq \frac{C}{1+|y|^{2+\sigma}}$ for some $\sigma>0$, we study the linearised problem
\be\label{Linearisedinner}
\begin{aligned}
\Delta_{y} \phi_{in}(y) + \gamma\Gamma^{\gamma-1}_{+}\phi_{in}(y) +  h(y)= 0 \inn \R^2, 
\end{aligned}
\ee
where $\Gamma$ is given in \eqref{defGamma}.

To solve this equation, we work in a topology of decaying functions endowed with Hölder-type norms, that capture both the decay at infinity and the Hölder regularity of the functions involved. 

Following this direction, for $\sigma>0$ and $\beta\in(0,1)$ it is relevant to define the norms
\be\label{norma} \begin{aligned}
\|h\|_{2+\sigma}  = & \sup_{y\in \R^2} (1+|y|)^{2+\sigma}|h(y)|, \\
\|h\|_{2+\sigma,\beta}  = &\|h\|_{2+\sigma}  +   \sup_{y\in \R^2}(1+|y|)^{2+\sigma + \beta}[h]_{B_1(y),\beta} ,
\end{aligned}
\ee
where for any $A\subset \R^2$ we use the standard semi-norm notation
$$
[h]_{A,\beta} = \sup_{\ell_1\neq \ell_2, \ell_1,\ell_2\in A}  \frac {|h(\ell_1) -h(\ell_2)| } {|\ell_1-\ell_2|^\beta}.
$$

A well-known result of Dancer and Yan \cite{MR2456896} establishes that the linear operator $\Delta_y +\gamma\Gamma^{\gamma-1}_{+}$ in \eqref{Linearisedinner} is non-degenerate, in the sense that its kernel in $L^{\infty}(\R^2)$ is spanned by the functions $Z_i(y)=\frac{\partial\Gamma}{\partial y_{i}}, \ i=1,2,$ which is consistent with \eqref{limit} being invariant under translations. Nevertheless, in the absence of suitable orthogonality conditions, the solution $\phi_{in}$ of \eqref{Linearisedinner} grows logarithmically fast at infinity, so the previous non-degeneracy result fails in our setting. In particular, owing to the logarithmic growth, the kernel of the linear operator $\Delta_y +\gamma\Gamma^{\gamma-1}_{+}$ in $\R^2$ is now spanned by
\begin{equation}\label{elemkernel}
Z_0=\frac{2}{\gamma-1}\Gamma(y)+y\cdot\nabla_y \Gamma(y), \quad  Z_i(y)  =  \partial_{y_i}  \Gamma(y), \  i=1,2.
\end{equation}
To identify the radial element $Z_0$, we observe that for any $\lambda,\zeta>0$, the equation \eqref{rescaledequation123} is invariant under the scaling $y \mapsto \lambda y, \Gamma \mapsto \lambda^{\zeta}\Gamma.$ From this perspective, we define the rescaled profile
$\Gamma_{\lambda}(y)=\lambda^{\zeta}\Gamma(\lambda y)$, where balancing the coefficients of the linear and nonlinear terms gives $\zeta=\frac{2}{\gamma-1}.$ Differentiating at \(\lambda=1\), one then obtains the generator of the scaling 
\[Z_0=\partial_{\lambda}\left(\lambda^{\frac{2}{\gamma-1}}\Gamma(\lambda y)\right)\Big |_{\lambda=1}=\frac{2}{\gamma-1}\Gamma(y)+y\cdot\nabla_{y}\Gamma(y).\]
We remark that for $i=1,2,$ it holds 
$Z_i=\mathcal{O}\left(\frac{1}{1+|y|}\right)$ as $|y|\to\infty$, whereas 
$Z_0=\mathcal{O}\left(\log\left(2+|y|\right)\right)$ as $|y|\to\infty$. To the best of our knowledge, this is the first work in which the element $Z_0$ in the kernel of the linear operator $\Delta_y+\Gamma^{\gamma-1}_{+}$ in $\R^2$ cannot be discarded from the analysis, as related works were restricted to bounded solutions.

We obtain the following lemma.
\begin{lemma}\label{lemat}
Given $\sigma>0$ and $\beta\in (0,1)$, consider the norms defined in \eqref{norma}. Then,
there exist a constant $C>0$ and a solution $\phi_{in} =  \mathcal{T} [ h]$ of equation \eqref{Linearisedinner} for each $h$ with $\|h\|_{2+\sigma} <+\infty$, that defines a linear operator of $h$ and satisfies the estimate
\begin{equation*}
\begin{aligned}
&(1+|y|) |\nabla \phi_{in} (y)|  + | \phi_{in} (y)| \\
&\le  \,  C \left [ \,  \log \left(2+|y|\right) \left|\int_{\R^2} h Z_0\right|  +    (1+|y|) \sum_{i=1}^2 \left|\int_{\R^2} h Z_i\right|+  (1+|y|)^{-\sigma} \|h\|_{2+\sigma}   \,\right].
\end{aligned}
\end{equation*}
In addition, if 
$\|h\|_{2+\sigma,\beta} <+\infty$, it holds
\begin{equation*}
\begin{aligned}
& (1+|y|^{2+\beta})  [D^2_y \phi_{in}]_{B_1(y),\beta}  +\left(1+|y|^2\right)  \left|D^2_y \phi_{in} (y)\right| \\  &
\,  \le  \,   C \left [ \,  \log \left(2+|y|\right) \left|\int_{\R^2} h Z_0\right|  +    (1+|y|) \sum_{i=1}^2 \left|\int_{\R^2} h Z_i\right|  +  (1+|y|)^{-\sigma} \|h\|_{2+\sigma,\beta}   \,\right]. 
\end{aligned}
\end{equation*}
\end{lemma}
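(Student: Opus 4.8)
The plan is to solve \eqref{Linearisedinner} by a decomposition into Fourier modes in the angular variable, treating the radial mode (mode $0$) and the first two modes ($\cos\theta,\sin\theta$) separately from the higher modes, since the former carry the kernel elements $Z_0,Z_1,Z_2$ listed in \eqref{elemkernel}. Writing $y=|y|e^{i\theta}$, expand $h(y)=\sum_{k\ge 0}h_k(|y|)e^{ik\theta}$ and correspondingly $\phi_{in}=\sum_{k\ge0}\phi_k(|y|)e^{ik\theta}$; each $\phi_k$ solves an ODE of the form $\phi_k''+\tfrac1r\phi_k'-\tfrac{k^2}{r^2}\phi_k+\gamma\Gamma_+^{\gamma-1}\phi_k+h_k=0$. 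For $k\ge2$ the operator has no bounded homogeneous solutions growing slower than $r^k$, so by the standard variation-of-parameters formula using the two homogeneous solutions (one $\sim r^k$ near $0$, one $\sim r^{-k}$ at $\infty$), one gets a solution with the decay $|\phi_k(r)|\lesssim r^{-\sigma}\|h\|_{2+\sigma}$ and geometric smallness in $k$, so the series converges and contributes the $(1+|y|)^{-\sigma}\|h\|_{2+\sigma}$ term in the estimate.

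For the modes $k=0,1$ the homogeneous equation has a bounded (or slowly growing) solution — namely $Z_0$ for $k=0$ and $\partial_{y_i}\Gamma$ for $k=1$ — which forces a solvability/growth analysis. For $k=1$, the second homogeneous solution is unbounded, and the variation-of-parameters solution is bounded with $O(1/r)$ decay at infinity precisely when the orthogonality $\int_{\R^2}hZ_i=0$ holds; in general the solution grows linearly, and the coefficient of that linear growth is exactly a multiple of $\int hZ_i$, which accounts for the term $(1+|y|)\sum_i|\int hZ_i|$. For $k=0$, the two homogeneous solutions behave like $Z_0=O(\log r)$ and a second one that decays like $r^{-2}$ at infinity (or is singular at $0$); the variation-of-parameters solution then grows at most like $\log r$, with the coefficient of the $\log$ growth being a multiple of $\int_{\R^2}hZ_0$, giving the term $\log(2+|y|)\,|\int hZ_0|$. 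One must pick the particular solution normalised so that it is smooth at the origin (this fixes the additive constant and picks out a unique $\mathcal{T}[h]$), and check the resulting map $h\mapsto\phi_{in}$ is linear — which is immediate from the linearity of all the integral formulas.

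Once the pointwise bounds on $\phi_{in}$ are established, the gradient bound $(1+|y|)|\nabla\phi_{in}|$ follows by differentiating the integral representations (or, more cleanly, by interior elliptic estimates applied on balls $B_{|y|/2}(y)$ rescaled to unit size, using the equation and the already-proven sup bound on $\phi_{in}$ together with $\|h\|_{2+\sigma}$). For the second-derivative and Hölder estimates under the stronger assumption $\|h\|_{2+\sigma,\beta}<\infty$, the argument is the same Schauder bootstrap: rescale to unit balls, apply interior $C^{2,\beta}$ estimates for $\Delta_y$ (treating $\gamma\Gamma_+^{\gamma-1}\phi_{in}+h$ as the right-hand side, noting $\Gamma_+^{\gamma-1}\in C^\beta$ since $\gamma>3$ so $\gamma-1>2$), and track the weights. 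The main obstacle I anticipate is the careful bookkeeping for the modes $k=0$ and $k=1$: one has to exhibit the two homogeneous solutions of each radial ODE with their precise asymptotics at $0$ and $\infty$ (using the explicit form \eqref{defGamma} of $\Gamma$, in particular that $\Gamma$ is harmonic-like, equal to $\nu'(1)\log r$, for $r>1$, so for $r>1$ the equation is just the free Bessel-type ODE $\phi_k''+\tfrac1r\phi_k'-\tfrac{k^2}{r^2}\phi_k=-h_k$), and then verify that the Wronskian-based solvability condition produces exactly the integrals $\int hZ_j$ with the claimed growth rates — the interior region $r<1$ requires the non-degeneracy input of Dancer–Yan \cite{MR2456896} to rule out extra bounded homogeneous solutions in modes $k\ge2$ and to pin down that the mode-$0$ and mode-$1$ kernels are one-dimensional as described.
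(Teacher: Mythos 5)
Your proposal follows essentially the same route as the paper's proof: angular Fourier decomposition, explicit ODE/variation-of-parameters analysis of modes $0$ and $\pm 1$ built on the homogeneous solutions $Z_0$ and $-\Gamma'$ to extract the $\log(2+|y|)\left|\int_{\R^2} hZ_0\right|$ and $(1+|y|)\left|\int_{\R^2} hZ_i\right|$ growth terms, followed by rescaled interior elliptic and Schauder estimates for the gradient and second-derivative bounds. The only substantive difference is that for $|k|\ge 2$ the paper works with a positive supersolution built from the mode-$2$ homogeneous solution $z_2$, which yields the explicit factor $k^{-2}$ needed to sum the Fourier series in $L^\infty$ (your claim of ``geometric smallness in $k$'' is an overstatement, though a $k^{-2}$ gain does come out of a careful variation of parameters, and your assertion that the second mode-$0$ homogeneous solution decays like $r^{-2}$ at infinity is a minor slip --- it is asymptotically constant there); neither point alters the structure of the argument.
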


\proof
Assuming that $h$ is a complex-valued function, we turn to polar coordinates $y=re^{i\theta}, \, r=|y|$,  and decompose $\phi_{in}$ and $h$ in Fourier series as 
$$ h(y) =  \sum_{k=-\infty}^{\infty}  h_k (r) e^{ik\theta} , \quad \phi_{in}(y)= \sum_{k=-\infty}^{\infty} \phi_{in,k} (r) e^{ik\theta}.
$$
It is then immediate that equation \eqref{Linearisedinner} decouples into modes, yielding the infinitely many ordinary differential equations  
\begin{equation}\label{01}
L_k [\phi_{in,k}] + h_k(r) = 0 , \quad r\in(0,\infty), \quad k \in \Z,
\end{equation}
where
$$
L_k\defeq \partial_{rr} + \frac 1r \partial_{r}- \frac{k^2}{r^2}+  \gamma\Gamma^{\gamma-1}_{+}  .
$$
For $k=0$, we have that the function $z_0(r) = \frac{2}{ \gamma-1}\Gamma(r)+r\Gamma'(r)$ satisfies $L_0[z_0]=0,$ thus if $\xi_0\in(0,1)$ denotes the unique root of $z_{0}$, the function defined as
$$
\phi_{in,0} (r) =     -z_0(r)\int_{\xi_0}^r  \frac {\rm d s}{ s z_0(s)^2}  \int_0^s h_0(\rho) z_0(\rho) \rho\, \dd \rho
$$
is a smooth solution of \eqref{01}.

Since $\int_0^\infty h_0(\rho) z_0(\rho) \rho\, \dd\rho = \frac 1{2\pi}\int_{\R^2} h(y)Z_0(y)\, \dd y,      $
we deduce that 
$$
|\phi_{in,0}(r)| \, \le \,   C\Big[ \, \log (2 + r) \Big| \int_{\R^2} h(y)Z_0(y)\, \dd y      \Big|   \, +\,  (1+r)^{-\sigma} \|h\|_{2+ \sigma} \Big ].
$$
For $|k|=1$, we observe that for $z_k(r)=-\Gamma'(r)$ it holds $L_k[z_k]=0$, so the function given by
$$
\phi_{in,k}  (r) =     \Gamma'(r)\int_r^\infty  \frac {\dd s}{ s\Gamma'(s)^2}  \int_0^s h_k(\rho) \Gamma'(\rho) \rho\, \dd\rho
$$
solves \eqref{01} for $k =-1,1,$ and satisfies
$$
|\phi_{in,k}(r)| \, \le \,   C\Big[ \,  (1+ r) \sum_{j=1}^2 \Big| \int_{\R^2} h(y)Z_j(y)\, \dd y      \Big|   \, +\,  (1+r)^{-(1+\sigma)} \|h\|_{ 2+\sigma} \Big ].
$$
For $k=2$, standard asymptotics give the existence of a function $z_2(r)$ such that $ L_2[z_2] = 0 $ and  $z_2(r)=\mathcal{O}(r^{2})$ as $r\to 0$ and $r\to \infty$, while for $|k|\ge 2$ we have that the function
$$
\bar\phi_{in,k}  (r) =   \frac 4{ k^2} z_2(r)\int_0^r  \frac {\dd s}{ s z_2(s)^2}  \int_0^s |h_k(\rho)| z_2(\rho) \rho\,\dd \rho
$$
is a positive supersolution for equation \eqref{01}. As such, we get the existence of a unique solution $\phi_{in,k}$  with $|\phi_{in,k}(r)| \le \bar\phi_{in,k}  (r)$, so that
$$
|\phi_{in,k}(r)| \, \le \,    \frac C {k^2}   (1+r)^{-\sigma} \|h\|_{2+\sigma}, \quad |k|\ge 2.
$$
Combining the above, it is clear that the function  $$\phi_{in}(y)= \sum_{k=-\infty}^{\infty} \phi_{in,k} (r) e^{ik\theta} $$  solves \eqref{Linearisedinner} and defines a linear operator of $h$ , while adding the individual estimates for each mode we infer that
\begin{equation}\label{cot} \begin{aligned}
 | \phi_{in} (y)|
\,  \le  \,  C \left [ \,  \log (2+|y|) \,\left|\int_{\R^2} h Z_0\right|  +    (1+|y|) \sum_{i=1}^2 \left|\int_{\R^2} h Z_i\right|  +  (1+|y|)^{-\sigma}\|h\|_{2+\sigma}   \,\right ]. \end{aligned}
\end{equation}
To proceed, assuming that $\|h\|_{2+\sigma,\beta} <+\infty$, we now turn our attention to proving similar estimates for the first-order and second-order derivatives of $\phi_{in}$. 
To do so, we let $R=|y|\gg 1$, fix a direction $\vec e=\frac{y}{|y|}\in\mathbb{S}^1$ so that $y=R\vec e$, and define the rescaled functions
\[
\phi_{R}(\ell)  = {R^{\sigma}} \phi_{in} \left(y+R\ell\right), \quad h_R(\ell) = R^{2+\sigma}h \left(y+R\ell\right). \]
In the neighbourhood $U_{y}\defeq \{y+R\ell: |\ell|<\frac{1}{2}\}$, a direct calculation shows that 
\[\Delta_\ell\phi_{R}(\ell)  +   R^2 \gamma\Gamma^{\gamma-1}_{+}(y+R\ell)\phi_{R}(\ell) + h_R (\ell)  = 0.\]
Setting \[\mu_i \defeq  \Big| \int_{\R^2}  h Z_i \Big|, \quad i=0,1,2,\]
we employ \eqref{cot} and a standard elliptic estimate to obtain
 \begin{equation}\label{holder1}
 \|\nabla_{\ell} \phi_{R} \|_{L^{\infty}\big( B_{\frac 14}(0) \big)} + \|\phi_{R} \|_{L^{\infty}\big( B_{\frac 12}(0) \big)} \, \le\,  C\Big [ \mu_0 R^{\sigma}\log R + \sum_{i=1}^2\mu_i R^{1+\sigma} + \| h\|_{2+\sigma} \Big ],
 \end{equation}
where we used that $\|h_{R}\|_{L^{\infty}\big(B_{\frac 1 2}(0)\big)}\leq C \|h\|_{2+\sigma}.$

In addition, since it also holds $[h_R(\ell) ]_{B_{\frac 12} (0),\beta}\leq C\| h\|_{2+\sigma,\beta}$ , an application of interior Schauder estimates together with the bounds in \eqref{holder1} yields
\begin{equation}\label{holder2}
\|D^2_{\ell} \phi_{R} \|_{L^\infty\big( B_{\frac 14}(0) \big) } +  [ D^2_{\ell} \phi_{R} ]_{B_{\frac 1 4}(0),\beta} \, \le\,  C\Big [ \mu_0 R^{\sigma} \log R +   \sum_{i=1}^2\mu_i R^{1+\sigma} +   \| h\|_{2+\sigma,\beta} \Big ].
\end{equation}
Collecting together \eqref{cot}, \eqref{holder1} and \eqref{holder2} we get the desired bounds, hence the proof is completed.
\qed
\section{Projected Linearised Inner Problem}\label{Section 6}
In the current section, we fix a sufficiently small $\delta>0$, and for $\rho= \frac{2\delta}{\ve|\log\ve|^2}$ and scalars $d_j, j=0,1,2,$ we focus on the projected linearised equation 
\begin{equation}\label{problemlinearprojected}
    \Delta_{y}\phi_{in}+\gamma\Gamma^{\gamma-1}_{+}\phi_{in}+\bar{B}[\phi_{in}]+h(y)=\sum_{j=0}^{2}d_j\gamma\Gamma^{\gamma-1}_{+}Z_j\quad\mbox{in}\quad B_{\rho}.
\end{equation}
 For convenience, we recall that the functions $Z_j, j=1,2$ are defined in \eqref{elemkernel}, while 
 \begin{equation}\label{defofBinner}
 \bar{B}[\phi_{in}]=b_0(y)\phi_{in}+\tilde{B}_0[\phi_{in}],
 \end{equation}
 with $b_0(y)=\mathcal{O}\left(\ve |y| \Gamma^{\gamma-2}_{+}\right)$ and $\tilde{B}_{0}=\ve^2B_{0}(\ve y)$, see \eqref{oper} and \eqref{defofbzero}. 

For $\sigma>0$ and  $\beta\in(0,1)$, we also adapt the norms in \eqref{norma} to be defined on any open set $A \subset \R^2$, by introducing 
\begin{equation}\label{norm1}
\|h\|_{2+\sigma,A}=\sup_{y\in A}\left(1+|y|\right)^{2+\sigma}|h(y)|, \quad \|h\|_{2+\sigma,\beta,A}=\sup_{y\in A}\left(1+|y|\right)^{2+\sigma+\beta}[h]_{B_{1}(y)\cap A}+\|h\|_{2+\sigma,A},\end{equation}
while for $\phi_{in}\in C^{2,\beta}\left(\R^2\right)$ we write
 \begin{equation}\label{starnorm}
 \|\phi_{in}\|_{*,\sigma,A} =  \|  D^2_y\phi_{in} \|_{2+\sigma,\beta,A}   + \|  D_y\phi_{in} \|_{1+\sigma,A}+ \|\phi_{in} \|_{\sigma,A}.
 \end{equation}
In what follows, for the sake of simplicity we omit the subscript when $A=\R^2$.

 We prove the following Proposition.
\begin{prop}\label{prop1} For any $\sigma>0$ and $\beta \in (0,1)$, consider the norms defined in \eqref{norm1} and \eqref{starnorm} respectively. Then, for all $\ve>0$ sufficiently small, $\rho=\frac{2\delta}{\ve|\log\ve|^2}$ and $h$ such that $\|h\|_{2+\sigma,\beta,B_{\rho}} < +\infty$, there exist numbers $\delta>0$ and $C>0$ such that for any differential operator $\bar{B}$ as in \eqref{defofBinner},
equation \eqref{problemlinearprojected} has a solution $\phi_{in} = T[h]$ for certain scalars $d_i= d_i[h], \,i=0,1,2$, that defines a linear operator of $h$ and satisfies
\begin{align*} 
\| \phi_{in}\|_{*,\sigma, B_\rho} \ \le\ C \|h \|_{2+\sigma,\beta, B_\rho} .
\end{align*}
Moreover, the functionals $d_i$ have the form
\begin{align*}
d_0[h]\, = & \, \gamma_0\int_{B_{\rho}}  h Z_0  + \mathcal{O}\left(\frac{\log(2+\rho)}{1+\rho^{\sigma}}\right)\|h \|_{2+\sigma,\beta, B_\rho} , \\    d_i[h]\, = &\, \gamma_i\int_{B_{\rho}}  h Z_i  + \mathcal{O}\left(\frac{1}{1+\rho^{1+\sigma}}\right)  \|h \|_{2+\sigma,\beta, B_\rho}, \ i=1,2,
\end{align*}
where
$\gamma_i^{-1} = \int_{\R^2} \gamma\Gamma^{\gamma-1}_{+} Z_i^2$, $i=0,1,2.$ \end{prop}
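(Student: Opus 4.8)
The plan is to reduce the projected problem on $B_\rho$ to the whole-space linear theory of Lemma~\ref{lemat}: extend the datum to $\R^2$, use the projections $d_0,d_1,d_2$ to annihilate the three kernel directions so that Lemma~\ref{lemat} returns a genuinely decaying solution, and then absorb the small operator $\bar B$ by a Neumann series.

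First I would treat the case $\bar B\equiv 0$. Given $h$ with $\|h\|_{2+\sigma,\beta,B_\rho}<+\infty$, multiply by a smooth radial cut-off to obtain $\tilde h$ on $\R^2$ with $\tilde h\equiv h$ on $B_\rho$, $\operatorname{supp}\tilde h\subset B_{2\rho}$ and $\|\tilde h\|_{2+\sigma,\beta,\R^2}\le C\|h\|_{2+\sigma,\beta,B_\rho}$. Since $\Gamma$ is radial, $\operatorname{supp}\Gamma_+^{\gamma-1}\subset B_1$, and $Z_0$ is even while $Z_1,Z_2$ are odd in $y_1,y_2$ respectively, the Gram matrix $\big(\int_{\R^2}\gamma\Gamma_+^{\gamma-1}Z_iZ_j\big)_{0\le i,j\le 2}$ is diagonal with positive entries $\gamma_i^{-1}$. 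Hence, setting $d_i=\gamma_i\int_{\R^2}\tilde h\,Z_i$, the function $\tilde h-\sum_{j=0}^2 d_j\gamma\Gamma_+^{\gamma-1}Z_j$ is $L^2(\R^2)$-orthogonal to $Z_0,Z_1,Z_2$ and still has $\|\cdot\|_{2+\sigma,\beta}$-norm $\le C\|h\|_{2+\sigma,\beta,B_\rho}$; note $\int_{\R^2}(1+|y|)^{-2-\sigma}|Z_i|<+\infty$, which also yields $|d_i|\le C\|h\|_{2+\sigma,\beta,B_\rho}$. Applying Lemma~\ref{lemat} to this orthogonalised datum produces $\phi^{(0)}=\mathcal T\big[\tilde h-\sum_j d_j\gamma\Gamma_+^{\gamma-1}Z_j\big]$, and since all three $Z_i$-moments vanish the $\log(2+|y|)$ and $(1+|y|)$ terms in the estimate of Lemma~\ref{lemat} disappear, leaving $\|\phi^{(0)}\|_{*,\sigma,\R^2}\le C\|h\|_{2+\sigma,\beta,B_\rho}$. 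Restricting to $B_\rho$, where $\tilde h=h$, this $\phi^{(0)}$ solves \eqref{problemlinearprojected} with $\bar B=0$ and the above $d_j$; moreover, since $\tilde h=h$ on $B_\rho$ and $Z_0=\mathcal O(\log(2+|y|))$, $Z_{1,2}=\mathcal O(1/|y|)$, the difference $\int_{\R^2}\tilde h Z_i-\int_{B_\rho}hZ_i=\int_{B_{2\rho}\setminus B_\rho}\tilde h Z_i$ is $\mathcal O\big(\tfrac{\log(2+\rho)}{1+\rho^\sigma}\big)\|h\|_{2+\sigma,\beta,B_\rho}$ for $i=0$ and $\mathcal O\big(\tfrac1{1+\rho^{1+\sigma}}\big)\|h\|_{2+\sigma,\beta,B_\rho}$ for $i=1,2$, which gives the stated form of the $d_i$. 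This defines a bounded linear operator $T_0\colon h\mapsto\phi^{(0)}$. (Equivalently, once $\phi^{(0)}$ is in hand, one reads the $d_i$ off by testing the equation against $Z_i$ and using Green's identity on $B_\rho$, the boundary flux through $\partial B_\rho$ producing precisely these rates.)

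Next I would incorporate $\bar B$. Recast \eqref{problemlinearprojected} as $\Delta_y\phi_{in}+\gamma\Gamma_+^{\gamma-1}\phi_{in}+(h+\bar B[\phi_{in}])=\sum_j d_j\gamma\Gamma_+^{\gamma-1}Z_j$ and look for $\phi_{in}=T_0[h+\bar B[\phi_{in}]]$, i.e. $(I-T_0\bar B)\phi_{in}=T_0[h]$ in the space with norm $\|\cdot\|_{*,\sigma,B_\rho}$. The decisive observation is that $\bar B$ is small on $B_\rho$: writing $\bar B[\phi]=\ve^2B_0(\ve y)[\phi]+b_0(y)\phi$ with $b_0=\mathcal O(\ve|y|\Gamma_+^{\gamma-2})$ (see \eqref{defofBinner}), the coefficients of $\ve^2B_0(\ve y)$ are $\mathcal O(\ve|y|)$ in front of second derivatives and $\mathcal O(\ve)$ in front of first derivatives, and since $\ve|y|\le 2\delta|\log\ve|^{-2}$ throughout $B_\rho$ one gets
\[
\|\bar B[\phi]\|_{2+\sigma,\beta,B_\rho}\,\le\, C\,\delta\,|\log\ve|^{-2}\,\|\phi\|_{*,\sigma,B_\rho}.
\]
Hence $\|T_0\bar B\|<\tfrac12$ for $\ve$ small, $(I-T_0\bar B)^{-1}$ is a convergent Neumann series, and $\phi_{in}=(I-T_0\bar B)^{-1}T_0[h]$ is the required solution, depending linearly on $h$, with $\|\phi_{in}\|_{*,\sigma,B_\rho}\le 2\|T_0[h]\|_{*,\sigma,B_\rho}\le C\|h\|_{2+\sigma,\beta,B_\rho}$. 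The associated scalars are $d_j=d_j[h+\bar B[\phi_{in}]]$, still linear in $h$, and the extra contribution $\gamma_j\int_{B_\rho}\bar B[\phi_{in}]Z_j$ (together with its cut-off tail) is controlled, via the pointwise bound $|\bar B[\phi_{in}](y)|\le C\ve(1+|y|)^{-1-\sigma}\|h\|_{2+\sigma,\beta,B_\rho}$ integrated against $Z_j$ and the smallness of $\ve|y|$ on $B_\rho$, by an error of the stated size, which is absorbed into the $\mathcal O(\cdot)$-terms.

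The step I expect to be the main obstacle is the sharp bookkeeping of the error functionals $d_j$: one must verify that each lower-order contribution — the cut-off tail $\int_{B_{2\rho}\setminus B_\rho}\tilde h\,Z_j$, the boundary flux through $\partial B_\rho$, and the perturbative term $\int_{B_\rho}\bar B[\phi_{in}]Z_j$ — is dominated by $\tfrac{\log(2+\rho)}{1+\rho^\sigma}\|h\|_{2+\sigma,\beta,B_\rho}$ for $j=0$ and by $\tfrac1{1+\rho^{1+\sigma}}\|h\|_{2+\sigma,\beta,B_\rho}$ for $j=1,2$. This needs the sharp weighted bounds on $\phi_{in}$ combined with the logarithmic growth of $Z_0$ balanced against the $(1+|y|)^{-2-\sigma}$ decay at the scale $|y|\sim\rho$, and a careful accounting of how the $\ve|y|\le 2\delta|\log\ve|^{-2}$ smallness of $\bar B$ interacts with those weights; the remaining ingredients — the extension, the orthogonality/parity computation, and the contraction estimate for $\bar B$ — are comparatively routine once the weighted Hölder norms of Section~\ref{Section 6} are set up.
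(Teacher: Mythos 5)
Your proposal is correct and follows essentially the same route as the paper: extend $h$ to $\tilde h$ supported in $B_{2\rho}$, use the diagonal projections $d_i=\gamma_i\int(\tilde h+\bar B[\phi_{in}])Z_i$ to remove the kernel directions so that Lemma~\ref{lemat} yields a decaying solution, and absorb $\bar B$ using its $\mathcal O(\delta|\log\ve|^{-2})$ smallness on $B_\rho$. Your Neumann series for $(I-T_0\bar B)^{-1}$ is just the linear instance of the paper's contraction mapping argument, and your accounting of the tail/perturbative contributions to the $d_i$ (including the separate treatment of $Z_0$ forced by its logarithmic growth) matches the paper's estimates.
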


\proof
We begin by considering a standard linear extension operator $h\mapsto \tilde{h} $ to the whole $\R^2$,
such that the support of $\tilde{h}$ is contained in $B_{2\rho}$ and $\|\tilde{h}\|_{\sigma,\beta} \le C\|h\|_{\sigma,\beta, B_\rho}$, where $C>0$ is independent of $\rho$. Similarly,
we assume that the coefficients of $\bar{B}$ are of class $C^1(\R^2)$, with compact support in $B_{2\rho}$.
 With these in mind, we consider the auxiliary projected problem in the full space $\R^2$ given by

\begin{equation}\label{001} 
\Delta_y\phi_{in}  +  \gamma\Gamma^{\gamma-1}_{+}\phi_{in}  + \bar{B}[\phi_{in}]  + \tilde{h}(y)  = \sum_{j=0}^2  d_j\gamma\Gamma^{\gamma-1}_{+} Z_j \inn \R^2,
\end{equation}
where $d_i = d_i[h,\phi_{in}]$ are the scalars defined as
\begin{equation}\label{coeffsforortho}
d_i= \gamma_i \int_{\R^2} (\bar{B}[\phi_{in}]  + \tilde{h}(y))Z_i,  \quad \gamma_i^{-1} = \int_{\R^2}  \gamma\Gamma^{\gamma-1}_{+} Z_i^2, \quad i=1,2.
\end{equation}
On the one hand, for $i=1,2$ we estimate 
\[\bar{B}[Z_i]\leq C\ve\left(|y||D^2_{y}Z_i|+|D_y Z_i|\right) +\mathcal{O}\left(\ve |y|\Gamma^{\gamma-2}_{+}\right)Z_i  \leq \frac{C\ve}{1+|y|^2},\]
thus using that $ \int_{\R^2} \frac {1}{1+|y|^{2+\sigma}}  <+\infty$ we get
 $$
  \int_{\R^2} \bar{B}[\phi_{in}]Z_i =   \int_{\R^2} \phi_{in} \bar{B}[Z_i] \leq C\ve\|\phi_{in}\|_{*,\sigma}.
 $$
On the other hand, the case $i=0$ needs to be treated separately, since $Z_0(y)=\mathcal{O}\left(\log\left(2+|y|\right)\right)$ as $|y|\to\infty.$ Specifically, we find 
 \[\bar{B}[Z_0]\leq \frac{C\ve}{1+|y|},\]
 hence
 \begin{equation*}
 \begin{aligned}
\int_{\R^2} \bar{B}[\phi_{in}]Z_0 &\leq C\ve\int_{B_{2\rho}} \left(|y||D^2_{y}\phi_{in}|+|D_{y}\phi_{in}|\right)Z_0 +\int_{B_{2\rho}} \mathcal{O}\left(\ve |y|\Gamma^{\gamma-2}_{+}\right)Z_0\\
&\leq C\ve^{\sigma}|\log\ve|^{2\sigma-1}\|\phi_{in}\|_{*,\sigma}.
\end{aligned}
\end{equation*}
Moreover, one can also derive the estimates 
\[\int_{\R^2\setminus B_{2\rho}} \tilde{h}Z_0=\mathcal{O}\left(\frac{\log(2+\rho)}{1+\rho^{\sigma}}\right)\|h\|_{2+\sigma,\beta,B_{\rho}}, \quad \int_{\R^2\setminus B_{2\rho}} \tilde{h}Z_i=\mathcal{O}\left(\frac{1}{1+\rho^{1+\sigma}}\right)\|h\|_{2+\sigma,\beta,B_{\rho}}, \, i=1,2,\]
and 
\[\bar{B}[\phi_{in}] \leq\frac{C\ve}{1+|y|^{1+\sigma}}\|\phi_{in}\|_{*,\sigma},\]
which further yields \[\big\| \bar{B}[\phi_{in}] \big\|_{2+\sigma,\beta} \leq \frac{C}{|\log\ve|^2}\|\phi_{in}\|_{*,\sigma,B_{\rho}}.\]
We consider the Banach space $\mathcal{X}\defeq\{\phi_{in} \in C^{2,\beta}(\R^2)\ : 
  \|\phi_{in}\|_{*, \sigma} <+\infty\}$ and observe that in order to find a solution of \eqref{001},
it suffices to solve the equation
\be\label{fp}
 \phi_{in}  =   \mathcal A  [\phi_{in}]  +  \mathcal {H},\quad \phi_{in} \in \mathcal{X},
\ee
where
$$
\mathcal A  [\phi_{in}]
 =  \mathcal T\Big [\bar{B}[\phi_{in}] -\sum_{i=0}^2 d_i[0, \phi_{in}]\gamma\Gamma^{\gamma-1}_{+}Z_i  \Big] ,\quad
 \mathcal H  = \mathcal T\Big [  \tilde h   - \sum_{i=0}^2 d_i[\tilde h,0] \gamma\Gamma^{\gamma-1}_{+}Z_i  \Big],
$$
and $\mathcal T$ is the linear operator constructed in Lemma \ref{lemat}.

Due to \eqref{coeffsforortho} and the symmetries of $Z_j,\,j=0,1,2$, it follows that
\[\big\|\mathcal{A}[\phi_{in}]\big\|_{*,\sigma}\leq C\frac{\delta}{|\log\ve|^2}\|\phi_{in}\|_{*,\sigma},\]
and likewise 
\[\|\mathcal{H}\|_{*,\sigma}\leq C\|h\|_{2+\sigma,\beta,B_{\rho}}.\]
As a consequence, using the Contraction Mapping Theorem in $\mathcal{X}$ we deduce that the fixed point problem \eqref{fp} admits a unique solution for all sufficiently small $\ve>0$, which defines a linear operator of $h$ and satisfies
$$
\|\phi_{in} \|_{*, \sigma} \ \le\  C \|h \|_{ 2+\sigma,\beta, B_{\rho}}.
$$
To this end, we conclude the proof by setting $T[h] = \phi_{in}\big|_{B_\rho}$. \qed

\section{Resolution of the Inner-Outer Gluing System}\label{Section projected}
Having established the existence of a solution $\phi_{out}$ for the Outer problem \eqref{poisreduced} in Section \ref{subsecouter}, our current objective is to employ the linear theories developed in Sections \ref{Section 5} and \ref{Section 6} to find a solution $\phi_{in}$ for the Inner problem
\[\Delta_{y}\phi_{in}+\gamma\Gamma^{\gamma-1}_{+}\phi_{in}+\bar{B}[\phi_{in}]
+\mathcal{N}\left(\tilde{\eta}_{\delta}\phi_{in}+\phi_{out}\right)
+\ve^2 S(\Psi_\alpha)+\left(\gamma\Gamma^{\gamma-1}_{+}+b_0(y)\right)\phi_{out} = 0, \quad \mbox{in} \,\, B_{\rho}.\]
For clarity, we recall that $\rho=\frac{2\delta}{\ve|\log\ve|^2}$,  $\tilde{\eta}_{\delta}=\eta\left(\frac{|\log\ve|^2|z|}{\delta}\right)$ with $\eta$ as in \eqref{definitioneta}, and $\bar{B}$ was defined in \eqref{defofBinner}.

As it will become clear later on, it is convenient to decompose $\phi_{in}=\phi_1+\phi_2$ as follows. For the functions $Z_j$ given in \eqref{elemkernel} and scalars $d_j, j=0,1,2$ as in Proposition \ref{prop1},  we ask that $\phi_{1}$ solves the auxiliary projected problem
\begin{equation}\label{problemforphi1}
\Delta_{y}\phi_1+\gamma\Gamma^{\gamma-1}_{+}\phi_1+\bar{B}[\phi_1]+\bar{B}[\phi_2]+H(\phi_1+\phi_2,\phi_{out})=\sum_{j=0}^{2}d_j\gamma\Gamma^{\gamma-1}_{+}Z_j \quad \mbox{in} \quad B_{\rho},
\end{equation}
with\[H(\phi_1+\phi_2,\phi_{out})\defeq\mathcal{N}\left(\tilde{\eta}_{\delta}\phi_{in}+\phi_{out}\right) +\ve^2 S(\Psi_\alpha) +\left(\gamma\Gamma^{\gamma-1}_{+}+b_0(y)\right)\phi_{out}.\]
Regarding $\phi_2$, we require that it satisfies the linear equation 
\begin{equation}\label{problemforphi2}
\Delta_{y}\phi_2+\gamma\Gamma^{\gamma-1}_{+}\phi_2 +d_{0}\gamma\Gamma^{\gamma-1}_{+}Z_0=0 \quad \mbox{in}\quad\R^2.
\end{equation}
Using these assumptions, one can realise that the Inner-Outer system in \eqref{inprobexp} and \eqref{poisreduced} admits a solution of the form $(\phi_{in},\phi_{out})=(\phi_1+\phi_2,\phi_{out})$, provided we further establish the coupled conditions 
\[d_1\left[\bar{B}[\phi_2]+H(\phi_1+\phi_2,\phi_{out})\right]=0, \quad d_2\left[\bar{B}[\phi_2]+H(\phi_1+\phi_2,\phi_{out})\right]=0.\]
We defer the analysis of these conditions to Section \ref{reduced} and begin by examining the equation for $\phi_2$ in \eqref{problemforphi2}. Turning to polar coordinates $y=re^{i\theta}, r=|y|,$ it is easy to see that a smooth solution to this problem is given by 
\begin{equation}\label{explicitphi2}
\phi_2(r)=-d_0 Z_0 (r) \int_{\xi_0}^{r}\frac{\dd s}{sZ_0(s)^2} \int_{0}^{s} \gamma\Gamma^{\gamma-1}_{+}(\rho) Z_{0}^2(\rho)\rho \, \dd \rho\eqqcolon d_0\hat{\phi_2}(r),
\end{equation}
where $\xi_0$ is the unique root of $Z_0$ in $(0,1).$

An important remark is that since $\operatorname{supp} \Gamma^{\gamma-1}_{+} \subset B_1(0)$, for any $s>1$ we get \[\int_{0}^{s}\gamma\Gamma^{\gamma-1}_{+}(\rho)Z_0(\rho)^2\rho \, \dd \rho = \int_{0}^{1}\gamma\Gamma^{\gamma-1}_{+}(\rho)Z_0(\rho)^2\rho \, \dd \rho. \]
With this in mind, for any $r>0$ we deduce that
\begin{equation}\label{estimateforphi2}
|\phi_2(r)| \leq C|d_0|\log(2+r).
\end{equation}
 The following Proposition is the main result of this section.
\begin{prop}\label{propsec7}
Let $\sigma > 0$ and $\beta \in (0,1)$. For $\ve>0$ sufficiently small, there exist constants $0<\sigma^{*}<\sigma$,  functions $\phi_{out}(x)$, $\phi_1(y)$, $\phi_2(y)$ solving \eqref{poisreduced}, \eqref{problemforphi1} and \eqref{problemforphi2} respectively, and scalars $d_j, \, j=1,2$, such that
 \begin{equation}\label{boundsforsoln}
    \left\|(1+|x|^2)^{-1}\phi_{out}\right\|_{\infty}\leq C\ve^{1+\sigma^{*}}, \quad \|\phi_1\|_{*,\sigma,B_{\rho}}<C\ve, \quad |d_0|<C \ve^{1+\sigma}|\log\ve|^{2+2\sigma},
 \end{equation}
 where $y=\frac{z}{\ve},$ \,$\rho=\frac{2\delta}{\ve|\log\ve|^2}$ and $\|\cdot\|_{*,\sigma,B_{\rho}}$ is defined in \eqref{starnorm}.
\end{prop}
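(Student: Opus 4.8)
The strategy is a coupled fixed-point argument that solves the three equations \eqref{poisreduced}, \eqref{problemforphi1} and \eqref{problemforphi2} simultaneously, treating $(\phi_1,\phi_{out},d_0)$ as the unknowns and viewing $\phi_2 = d_0\hat\phi_2$ as determined by $d_0$ through \eqref{explicitphi2}. First I would set up the solution space as a product
\[
\mathcal{F}=\Big\{(\phi_1,\phi_{out},d_0): \|\phi_1\|_{*,\sigma,B_\rho}\le C\ve,\ \big\|(1+|x|^2)^{-1}\phi_{out}\big\|_\infty\le C\ve^{1+\sigma^*},\ |d_0|\le C\ve^{1+\sigma}|\log\ve|^{2+2\sigma}\Big\},
\]
equipped with the natural product norm, and define a map on $\mathcal{F}$ as follows: given $(\phi_1,\phi_{out},d_0)$, first solve the Outer equation \eqref{poisreduced} for a new $\phi_{out}$ via the operator $\mathcal{T}^o$ of Proposition \ref{propouter}, using the a priori bound \eqref{boundphiout}; then apply the operator $T$ from Proposition \ref{prop1} to the right-hand side $-\big(\bar{B}[\phi_2] + H(\phi_1+\phi_2,\phi_{out})\big)$ to obtain a new $\phi_1$ together with new scalars $d_j[h]$, $j=0,1,2$; finally read off the new $d_0$ from the formula for $d_0[h]$ in Proposition \ref{prop1}. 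A fixed point of this map yields the desired triple, and then $\phi_2$ is defined by \eqref{explicitphi2}.

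The core of the argument is estimating the right-hand side $h\defeq \bar{B}[\phi_2] + H(\phi_1+\phi_2,\phi_{out})$ in the norm $\|\cdot\|_{2+\sigma,\beta,B_\rho}$ and tracking how each piece propagates through $T$ and $\mathcal{T}^o$. The term $\ve^2 S(\Psi_\alpha)$ contributes $O(\ve)$ by Proposition \ref{errorprop1} (it is supported in $B_1$ and bounded by $C\ve|y|\Gamma_+^\gamma$), which forces the scale $\|\phi_1\|_{*,\sigma,B_\rho}\lesssim\ve$. The coupling term $(\gamma\Gamma_+^{\gamma-1}+b_0)\phi_{out}$ is controlled using $\phi_{out}(r_0,0)=0$ from \eqref{phioutP}: Taylor-expanding $\phi_{out}$ around $P$ and using \eqref{boundphiout} shows that on $B_1$ (in the $y$ variable this is $|z|\le\ve$) one has $|\phi_{out}|\le C\ve^{1+\sigma^*}\cdot\ve|y|$, so this term is of size $\ve^{2+\sigma^*}$, strictly smaller than the leading $\ve$. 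The nonlinear term $\mathcal{N}(\tilde\eta_\delta\phi_{in}+\phi_{out})$ is quadratic, hence $O(\ve^2)$ by the a priori bounds, giving the contraction. The term $\bar{B}[\phi_2]$ is the delicate one: since $\phi_2=d_0\hat\phi_2$ grows logarithmically, $\bar{B}[\phi_2]\lesssim \ve|d_0|\,\log(2+|y|)/(1+|y|)$, which, tested against $Z_0$, produces $\int_{B_\rho}\bar{B}[\phi_2]Z_0 = O(\ve|d_0|\log^2\rho)$; combined with the $O(\ve)$ contribution of the other terms one gets $|d_0|\le C\ve^{1+\sigma}|\log\ve|^{2+2\sigma} + (\text{small})\,|d_0|$, which closes provided $\ve$ is small. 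This is precisely the Inner–Outer coupling through $\int\gamma\Gamma_+^{\gamma-1}Z_0 = O(1)$ flagged in the introduction, and it is the main obstacle: one must verify that the self-interaction of $d_0$ through $\phi_2\mapsto\bar{B}[\phi_2]\mapsto d_0$ has operator norm $o(1)$, so that the estimate on $d_0$ is not circular.

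The remaining points are routine. Continuity/contraction of the map in the product norm follows by subtracting two evaluations, using linearity of $\mathcal{T}^o$ and $T$ and the Lipschitz bound on the quadratic nonlinearity $\mathcal{N}$, all of which gain a factor $\ve$ or $\delta/|\log\ve|^2$; hence for $\ve$ small the map is a contraction of $\mathcal{F}$ into itself, and the Banach fixed point theorem applies. The exponent $\sigma^*<\sigma$ is chosen, as in Section \ref{subsecouter}, so that the decay of $L_x[\tilde\eta_\delta\phi_{in}]-\tilde\eta_\delta L_x[\phi_{in}]$ and of the transplanted $\phi_{in}$-terms in $G_{out}$ is compatible with the $\bar\nu>2$ hypothesis of Proposition \ref{propouter}. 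Symmetry in $x_2$ (resp. $y_2$) is preserved throughout: the data $\ve^2 S(\Psi_\alpha)$, $b_0$, and $\Psi_\alpha$ are even, $\mathcal{T}^o$ and $T$ preserve evenness, and $Z_0$ is radial while $Z_1$ is odd in $y_2$ — this is what makes the projections onto $Z_1,Z_2$ consistent and lets us defer the conditions $d_1=d_2=0$ to Section \ref{reduced}. Finally, $\phi_2$ inherits the bound \eqref{estimateforphi2} from the bound on $|d_0|$, completing the proof.
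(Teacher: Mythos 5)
Your proposal follows essentially the same route as the paper: a Banach fixed-point argument for $(\phi_1,d_0)$ (with $\phi_2=d_0\hat\phi_2$ and $\phi_{out}$ slaved to $\phi_{in}$ through the reduced Outer problem), where the decisive step is exactly the one you identify — using the normalisation $\phi_{out}(P)=0$ and the Taylor expansion of $\phi_{out}$ at $P$ to downgrade the coupling term $\int(\gamma\Gamma^{\gamma-1}_{+}+b_0)\phi_{out}Z_0$ from $\mathcal{O}(\ve^{1+\sigma^{*}})$ to $\mathcal{O}(\ve^2|\log\ve|)$, together with verifying that the self-map $d_0\mapsto\bar B[\phi_2]\mapsto d_0$ has small norm. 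One minor computational remark: the polar Jacobian gives $\int_{B_\rho}\bar B[\phi_2]Z_0\lesssim \ve|d_0|\,\rho\log\rho=\mathcal{O}(\delta|d_0|/|\log\ve|)$ rather than your $\mathcal{O}(\ve|d_0|\log^2\rho)$, but since this is still $o(1)|d_0|$ your closing of the $d_0$ estimate is unaffected.
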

\begin{proof}
The problem can be formulated as finding a pair $(\phi_1,d_0)$ solving the fixed point problem 
\begin{equation}\label{fixedfinal}
(\phi_1,d_0) =\tilde{\mathcal{F}}(\phi_1,d_0),
\end{equation}
where 
\begin{equation*}
\begin{aligned}
\phi_1&=T[H(\phi_1+\phi_2,\phi_{out})]\in \mathcal{X}_{*}\defeq\{\phi_{in} \in C^{2,\beta}(\R^2)\ : 
  \|\phi_{in}\|_{*, \sigma,B_{\rho}} <+\infty\},\\
d_0&=\gamma_0\int_{R^2} \left(H(\phi_1+\phi_2,\phi_{out})+\bar{B}[\phi_2]\right)Z_0, \quad \gamma_0^{-1}=\int_{\R^2}\gamma\Gamma^{\gamma-1}_{+}Z_0^2.
\end{aligned}
\end{equation*}
In the above, $T$ is the linear operator built in Proposition \ref{prop1}, \,$\phi_2(y)=d_0\hat{\phi}_2(y)$ is given in \eqref{explicitphi2}, while $\phi_{out}(x)$ was obtained in Section \ref{subsecouter}, satisfying \eqref{boundphiout} and \eqref{phioutP}.

By means of the Banach's fixed point Theorem, we want to find a solution in the ball 
\begin{equation}\label{contractionball}
\mathcal{B}=\{(\phi_1,d_0)\in \mathcal{X}_{*}\times \R: \phi_1(y_1,y_2)=\phi_1(y_1,-y_2), \,\, \|\phi_1\|_{*,\sigma,B_\rho}\leq \hat{C}\ve, \,\, |d_0|\leq \hat{C}\ve^{1+\sigma}|\log\ve|^{2+2\sigma}\},
\end{equation}
for some constant $\hat{C}>0$ to be chosen.

To carry out the fixed point argument, we firstly estimate
\begin{equation*}
\begin{aligned}
\left|H(\phi_1+\phi_2,\phi_{out})\right| &\leq C\Gamma^{\gamma-2}_{+}\left(|\tilde{\eta}_{\delta}\phi_1|^2+|\tilde{\eta}_{\delta}\phi_2|^2+|\phi_{out}|^2\right) + \ve^2 S(\Psi_\alpha)+\left(\gamma\Gamma^{\gamma-1}_{+}+b_0(y)\right)|\phi_{out}|\\
&\leq C\ve |y|\Gamma^{\gamma}_{+},
\end{aligned}
\end{equation*}
thus we immediately obtain \[\|H(\phi_1+\phi_2,\phi_{out})\|_{2+\sigma,\beta,B_{\rho}} \leq C \ve.
\]
In addition, using \eqref{estimateforphi2} one finds
\begin{equation*}
\begin{aligned}
\left|\bar{B}[\phi_2]\right| &\leq C\ve\left(|y||D^{2}_{y}\phi_2|+|D_y \phi_2|\right)+\mathcal{O}\left(\ve |y|\Gamma^{\gamma-2}_{+}\right)\phi_2\\
&\leq \frac{C\ve|d_0|}{1+|y|}+\mathcal{O}\left(\ve|y|\Gamma^{\gamma-2}_{+}\right)|d_0|\log(2+|y|),\\
\end{aligned}
\end{equation*}
so we further have
\[\left\|\bar{B}[\phi_2]\right\|_{2+\sigma,\beta,B_{\rho}}\leq C\ve.\]
Invoking Proposition \ref{prop1}, we readily get
\begin{equation}\label{firsterr71}
\big\|\phi_1\left[H(\phi_1+\phi_2,\phi_{out})+\bar{B}[\phi_2]\right]\big\|_{*,\sigma,B_{\rho}}\leq\big\| H(\phi_1+\phi_2,\phi_{out})+\bar{B}[\phi_2]\big\|_{2+\sigma,\beta,B_{\rho}}\leq C_1\ve,
\end{equation}
for some $C_1>0$.

Next, we compute 
\begin{equation}\label{estimateBphi2Zo}
\begin{aligned}
\int_{B_{\rho}}\bar{B}[\phi_2]Z_0 
&\leq C\ve  |d_0|\int_{B_{\rho}}\frac{\log(2+|y|)}{1+|y|}+ \int_{B_{\rho}}\mathcal{O}\left(\ve|y|\Gamma^{\gamma-2}_{+}\right)|d_0|\log(2+|y|) Z_0\\
&\leq \mathcal{O}\left(\frac{|d_0|}{|\log\ve|}\right),
\end{aligned}
\end{equation}
and 
\begin{equation*}
\begin{aligned}
\int_{B_{\rho}}H\left(\phi_1+\phi_2,\phi_{out}\right)Z_0 &\leq C\int_{B_{\rho}}\Gamma^{\gamma-2}_{+}\left(|\tilde{\eta}_{\delta}(\phi_1+\phi_2)|^2+|\phi_{out}|^2\right)Z_0 +C\int_{B_{\rho}} \ve^2 S(\Psi_{\alpha})Z_0\\
&+C\int_{B_{\rho}}\left(\gamma\Gamma^{\gamma-1}_{+}+\mathcal{O}\left(\ve|y|\Gamma^{\gamma-2}_{+}\right)\right)|\phi_{out}|Z_0.
\end{aligned}
\end{equation*}
Since $Z_0$ is a radial function, using Proposition \ref{errorprop1} we observe that \[\int_{B_{\rho}}\ve^2 S(\Psi_{\alpha})Z_0 =\mathcal{O}\left(\ve^2|\log\ve|\right), \qquad \int_{B_{\rho}}\Gamma^{\gamma-2}_{+}\left(|\tilde{\eta}_{\delta}(\phi_1+\phi_2)|^2+|\phi_{out}|^2\right)Z_0=\mathcal{O}\left(\ve^2\right).  \]
However, due to  $\int_{B_{\rho}} \gamma\Gamma^{\gamma-1}_{+} Z_0 = \mathcal{O}(1)$, the bound for $\phi_{out}$ in \eqref{boundphiout} gives
\[\int_{B_{\rho}} \left(\gamma\Gamma^{\gamma-1}_{+}+b_0(y)\right)Z_0|\phi_{out}|=\mathcal{O}(\ve^{1+\sigma^{*}}),\]
for some $0<\sigma^{*}<\sigma$.

This estimate reveals a strong coupling between the Inner and Outer problems, since for $0<\sigma^{*}<\sigma$ we have that $\ve^{1+\sigma^{*}} \gg \ve^{1+\sigma}|\log\ve|^{1+2\sigma}$ for all small $\ve>0$. This is precisely where the reduced form of the Outer problem \eqref{poisreduced} provided by Lemma \ref{lemmaout} becomes vital, as to overcome this issue we use \eqref{phioutP} together with the expansion \[\phi_{out}(x)=\phi_{out}(P)+\mathcal{O}\left(\ve |y|\|\phi_{out}\|_{\infty}\right) \quad \mbox{as} \quad  \ve\to 0,\] which is valid in the region $\left\{x\in\R^2:|x-P|<\frac{1}{|\log\ve|^2}\right\},$ to eventually deduce that
\begin{equation}\label{estimateHZ0afterP}
\int_{B_{\rho}} H\left(\phi_1+\phi_2,\phi_{out}\right)Z_0 \leq C\ve^2|\log\ve|.
\end{equation}
Combining \eqref{estimateBphi2Zo} and \eqref{estimateHZ0afterP}, we write
\[\int_{B_{\rho}}\left(H(\phi_1+\phi_2,\phi_{out}+\bar{B}[\phi_2]\right)Z_0=\mathcal{O}\left(\frac{|d_0|}{|\log\ve|}\right),\]
while employing Proposition \ref{prop1} again, one obtains 
\begin{equation}\label{secerr71}
\begin{aligned}
 |d_0|&=\gamma_{0}\int_{\R^2} \left(H\left(\phi_1+\phi_2,\phi_{out}\right)+\bar{B}[\phi_2]\right)Z_0\\
 &\leq \mathcal{O}\left(\frac{|d_0|}{|\log\ve|}\right)+\mathcal{O}\left(\ve^{\sigma}|\log\ve|^{1+2\sigma}\right)\left\|H(\phi_1+\phi_2,\phi_{out})+\bar{B}[\phi_2]\right\|_{2+\sigma,\beta,B_{\rho}}\\
 &\leq C_2 \ve^{1+\sigma}|\log\ve|^{2+2\sigma},
\end{aligned}
\end{equation}
for some $C_2>0.$

Collecting \eqref{firsterr71} and \eqref{secerr71}, it is clear that $\tilde{\mathcal{F}}(\mathcal{B})\subset \mathcal{B}$, as long as we choose the constant $\hat{C}>0$ in \eqref{contractionball} sufficiently large, independent of $\ve>0$.  

To complete the proof, it remains to establish that $\tilde{\mathcal{F}}$ in \eqref{fixedfinal} is a contraction mapping in $\mathcal{B}$. For this purpose, we consider the pairs $(\phi^{1}_{1},d^{1}_0), (\phi^{2}_{1},d^{2}_{0}) \in \mathcal{B},$ together with the corresponding $\phi^{1}_{2}, \phi^{2}_{2}$ in the decompositions of $\phi_{in}^{1}$ and $\phi_{in}^{2}$. 

We recall that for $j=1,2$ it holds
\[d^{j}_{0}=\gamma_{0}\int_{R^2} \left(H(\phi^{j}_{1}+\phi^{j}_{2},\phi_{out}[\phi^{j}_{1}+\phi^{j}_{2}])+\bar{B}[\phi^{j}_{2}]\right)Z_0, \quad \gamma_0^{-1}=\int_{\R^2}\gamma\Gamma^{\gamma-1}_{+}Z_0^2.\]
Defining $\phi_{out}^j=\phi_{out}[\phi_1^j+\phi_2^j]$ and $H^{j}=H\left(\phi_1^j +\phi_2^j,\phi_{out}[\phi_1^j+\phi_2^j]\right)$ for $j=1,2,$ we use \eqref{outeroperatorphiin} to derive the estimate
\begin{equation}\label{phioutcontraction}
\|\phi^1_{out}-\phi^2_{out}\|_{\infty}\leq C\left(\ve^{\sigma}|\log\ve|^{\mu}\|\phi^1_{1}-\phi^2_{1}\|_{*,\sigma,B_{\rho}}+|\log\ve|^{\mu}|d_0^1-d_0^2|\right),
\end{equation}
for some $\mu>0.$

Furthermore, since 
\begin{equation*}
      H^1-H^2=\mathcal{N}\left(\tilde{\eta}_{\delta}(\phi^{1}_{1}+\phi^{1}_{2})+\phi_{out}^{1}\right)-\mathcal{N}\left(\tilde{\eta}_{\delta}(\phi^{2}_{1}+\phi^{2}_{2})+\phi_{out}^{2}\right)+ \left(\gamma\Gamma^{\gamma-1}_{+}+b_0(y)\right)\left(\phi_{out}^{1}-\phi_{out}^{2}\right),
\end{equation*}
we further obtain
\begin{equation*}
\begin{aligned}
|H^1-H^2|&<C\Gamma^{\gamma-2}_{+}\left(\tilde{\eta}_{\delta}\left(|\phi_{1}^1-\phi_{1}^2|^2+|\phi_2^1-\phi_2^2|^2\right)+|\phi^1_{out}-\phi^2_{out}|^2\right)\\
&+\left(\gamma\Gamma^{\gamma-1}_{+}
+\mathcal{O}\big(\ve|y|\Gamma^{\gamma-2}_{+}\big)\right)|\phi^1_{out}-\phi^2_{out}|,
\end{aligned}
\end{equation*}
which allows to conclude that
\begin{equation}\label{Esth1minush2}
\begin{aligned}
\|H^1-H^2\|_{2+\sigma,\beta,B_{\rho}}&\leq C \left(\|\phi_{out}^1-\phi_{out}^2\|_{\infty}+\|\phi^{1}_{1}-\phi^{2}_{1}\|^2_{*,\sigma,B_{\rho}}+|d^{1}_{0}-d^{2}_{0}|^2+\|\phi_{out}^1-\phi_{out}^2\|^2_{\infty}\right)\\
&+C\ve\left(\|\phi^{1}_{1}-\phi^{2}_{1}\|_{*,\sigma,B_{\rho}}+|d^{1}_{0}-d^{2}_{0}|+\|\phi_{out}^1-\phi_{out}^2\|_{\infty}\right).
\end{aligned}
\end{equation}
Moreover, due to \eqref{explicitphi2} and \eqref{estimateforphi2} we have \[\bar{B}[\phi^{1}_{2}-\phi^{2}_{2}]=\mathcal{O}(|d^{1}_{0}-d^{2}_{0}|)\bar{B}[\hat{\phi}_2], \quad \mbox{where} \quad \bar{B}[\hat{\phi}_{2}]\leq \frac{C\ve}{2+|y|}+\mathcal{O}\left(\ve|y|\Gamma^{\gamma-2}_{+}\right)\hat{\phi}_2,\]
hence we get
\begin{equation}\label{estforBfinal}
\left\|\bar{B}[\phi^{1}_{2}-\phi^{2}_{2}]\right\|_{2+\sigma,\beta,B_{\rho}} \leq \frac{\mathcal{O}\left(|d_0^1-d_0^2|\right)}{\ve^{\sigma}|\log\ve|^{2+2\sigma}}.
\end{equation}
On the other hand, it holds
\begin{equation}\label{finalestimates1}
\begin{aligned}
    |d^{1}_{0}-d^{2}_{0}|&\leq \mathcal{O}\left(\frac{\log(2+\rho)}{1+\rho^{\sigma}}\right)\left\|H^1-H^2+\bar{B}[\phi^{1}_{2}-\phi^{2}_{2}]\right\|_{2+\sigma,\beta,B_{\rho}}+\mathcal{O}\left(\frac{|d_0^1-d_0^2|}{|\log\ve|}\right)\\
    &\leq C\ve^{\sigma}|\log\ve|^{1+2\sigma}\Bigg[\|\phi_{out}^1-\phi_{out}^2\|_{\infty}+\|\phi^{1}_{1}-\phi^{2}_{1}\|^2_{*,\sigma,B_{\rho}}+|d^{1}_{0}-d^{2}_{0}|^2+\|\phi_{out}^1-\phi_{out}^2\|^2_{\infty}\\
&+\ve\Big(\|\phi^{1}_{1}-\phi^{2}_{1}\|_{*,\sigma,B_{\rho}}+|d^{1}_{0}-d^{2}_{0}|+\|\phi_{out}^1-\phi_{out}^2\|_{\infty}\Big)\Bigg]+\mathcal{O}\left(\frac{|d_0^1-d_0^2|}{|\log\ve|}\right),
\end{aligned}
\end{equation}
thus making use of Proposition \ref{prop1}, \eqref{Esth1minush2}, \eqref{estforBfinal} and \eqref{finalestimates1}, we find 
\begin{equation}\label{finalestimates2}
\begin{aligned}
&\big\|T\left[H^1-H^2+\bar{B}[\phi_2^1-\phi_2^2]\right]\big\|_{*,\sigma,B_{\rho}}\leq C\big\|H^1-H^2+\bar{B}[\phi_2^1-\phi_2^2]\big\|_{2+\sigma,\beta,B_{\rho}}\\
&\leq C\Bigg[\|\phi_{out}^1-\phi_{out}^2\|^2_{\infty}+\|\phi^{1}_{1}-\phi^{2}_{1}\|^2_{*,\sigma,B_{\rho}}+|d^{1}_{0}-d^{2}_{0}|^2 +\|\phi_{out}^1-\phi^2_{out}\|_{\infty}\\&\hspace{10mm}+\ve\left(\|\phi^{1}_{1}-\phi^{2}_{1}\|_{*,\sigma,B_{\rho}}+|d^{1}_{0}-d^{2}_{0}|+\|\phi_{out}^1-\phi^2_{out}\|_{\infty}\right)\\
&\hspace{10mm}+\frac{1}{|\log\ve|}\left(\|\phi^{1}_{1}-\phi^{2}_{1}\|_{*,\sigma,B_{\rho}}+|d_0^1-d_0^2|+\|\phi_{out}^1-\phi_{out}^2\|_{\infty}\right)\Bigg].
\end{aligned}
\end{equation}
To this end, combining \eqref{phioutcontraction}, \eqref{finalestimates1} and \eqref{finalestimates2} one can verify that $\tilde{\mathcal{F}}$ is a contraction mapping in $\mathcal{B}$ for all sufficiently small $\ve>0$, which yields the existence of the desired fixed point.
\end{proof}

\section{Solving the Reduced Problem}\label{reduced}
In Section \ref{Section projected}, we have established the existence of a solution $(\phi_{in},\phi_{out})=(\phi_1+\phi_2,\phi_{out})$ to the coupled system of equations
\begin{equation*}
\begin{cases}
\Delta_{y}\phi_{in}+\gamma\Gamma^{\gamma-1}_{+}\phi_{in}+\bar{B}[\phi_{in}]+H(\phi_{1}+\phi_{2},\phi_{out})=\sum\limits_{j=1}^{2}d_j\gamma\Gamma^{\gamma-1}_{+}Z_j \quad \mbox{in} \quad B_{\rho},\\
L_{x}[\phi_{out}]+L_{x}[\tilde{\eta}_{\delta}\phi_{in}]-\tilde{\eta}_{\delta}L_{x}[\phi_{in}]=0\quad \text{in }  \R^2.
\end{cases}
\end{equation*}
For the reader's convenience, we recall that  $\rho = \frac{2\delta}{\ve|\log\ve|^2}$, $\bar{B}$ is given in \eqref{defofBinner} and $Z_j, j=1,2$ are defined in \eqref{elemkernel}, while the solution $(\phi_{in},\phi_{out})$ and the relevant estimates are contained in Proposition \ref{propsec7} and \eqref{boundsforsoln}.

To conclude the proof of Theorem \ref{maintheorem}, it remains to obtain an actual solution to the coupled Inner-Outer system in \eqref{inprobexp} and \eqref{poisreduced}, by further requiring that 
\begin{equation}\label{zerofunctionals}
 d_1\left[H(\phi_1+\phi_2,\phi_{out})+\bar{B}[\phi_2]\right]=0, \quad d_2\left[H(\phi_1+\phi_2,\phi_{out})+\bar{B}[\phi_2]\right]=0,
 \end{equation}
where 
\begin{equation*}
    \begin{aligned}
        &d_j=\gamma_j \int_{\R^2}\left(H(\phi_1+\phi_2,\phi_{out})+\bar B[\phi_2]\right)Z_j, \quad \gamma_{j}^{-1}=\int_{\R^2}\gamma\Gamma^{\gamma-1}_{+}Z_j^2, \quad j=1,2,\\
&H(\phi_1+\phi_2,\phi_{out})=\mathcal{N}\left(\tilde{\eta}_{\delta}\phi_{in}+\phi_{out}\right)+\ve^2S(\Psi_\alpha)+\left(\gamma\Gamma^{\gamma-1}_{+}+\mathcal{O}\big(\ve|y|\Gamma^{\gamma-2}_{+}\big)\right)\phi_{out}.
    \end{aligned}
\end{equation*}
To attain these conditions, we claim that the rotational speed $\alpha$ in \eqref{rotansatz} must be chosen as in \eqref{rotspeed}. 

To see this, using Proposition \ref{prop1} we initially observe that for $j=1,2,$ it holds  
\begin{equation*}
\begin{aligned}
d_j\left[H(\phi_1+\phi_2,\phi_{out})+\bar{B}[\phi_2]\right]&=\gamma_{j}\int_{B_{\rho}} \left(H(\phi_1+\phi_2,\phi_{out})+\bar{B}[\phi_2]\right)Z_{j}\\
&\hspace{4mm}+\mathcal{O}\left(\ve^{1+\sigma}|\log\ve|^{2+2\sigma}\right)\big\|H(\phi_1+\phi_2,\phi_{out})+\bar{B}[\phi_2]\big\|_{2+\sigma,\beta,B_{\rho}}\\
&=\gamma_{j}\int_{B_{\rho}} \left(H(\phi_1+\phi_2,\phi_{out})+\bar{B}[\phi_2]\right)Z_{j} +\mathcal{O}\left(\ve^{1+\sigma}\right),
\end{aligned}
\end{equation*}
for $\sigma>0$ as in \eqref{boundsforsoln}.

As a result, by continuity we get that the conditions in \eqref{zerofunctionals} reduce to 
\[\gamma_{j}\int_{B_{\rho}} \left(H(\phi_1+\phi_2,\phi_{out})+\bar{B}[\phi_2]\right)Z_j =\mathcal{O}\left(\ve^{1+\sigma}\right), \quad j=1,2.\]
Moreover, due to the estimate
\[\gamma_{j}\int_{B_{\rho}} \left(\mathcal{N}\left(\tilde{\eta}_{\delta}\phi_{in}+\phi_{out}\right)+\left(\gamma\Gamma^{\gamma-1}_{+}+\mathcal{O}\big(\ve|y|\Gamma^{\gamma-2}_{+}\big)\right)\phi_{out}\right)Z_j =\mathcal{O}(\ve^{1+\sigma}),\]
we realise that it remains to ensure that \[\int_{B_{\rho}} \ve^2 S(\Psi_\alpha) Z_j=\mathcal{O}(\ve^{1+\sigma}), \quad j=1,2.\]
In light of Proposition \ref{errorprop1}, the error of approximation satisfies $\ve^2S(\Psi_{\alpha})(\ve y_1,\ve y_2)=\ve^2S(\Psi_{\alpha})(\ve y_1,-\ve y_2)$ and has the expansion 
\begin{equation}\label{errorfinalsection}
\begin{aligned}
    \ve^2 S(\Psi_\alpha)&= \frac{3r_0h^2+r_0^2}{2h(h^2+r_0^2)^{\frac 3 2}}\ve y_1\Gamma^{\gamma}_{+} +\gamma\Gamma^{\gamma-1}_{+}\Bigg[\ve y_1|\log\ve|\left(-c_1\nu'(1)-\frac{\alpha r_0 h}{\sqrt{h^2+r_0^2}}\right)\\
    &+\ve y_1 \left(c_1\Gamma+\frac{h}{\sqrt{h^2+r_0^2}}\partial_{y_1}H_{2\ve}(P)\right) +\mathcal{O}\left(\ve^2|\log\ve||y|^2\right) \Bigg] +\mathcal{O}\left(\ve^2|\log\ve|^2|y|^2\Gamma^{\gamma-2}_{+}\right).
    \end{aligned}
\end{equation}
Since $Z_2=\frac{\partial\Gamma}{\partial{y_2}}$ is odd in $y_2,$ it is then immediate due to symmetry that \[\int_{B_{\rho}} \ve^2 S(\Psi_{\alpha}) Z_2 =0.\]
Upon testing \eqref{errorfinalsection} with $Z_1$, to satisfy the condition $\int_{B_{\rho}} \ve^2 S(\Psi_{\alpha})Z_1 =\mathcal{O}(\ve^{1+\sigma})$ we choose 
\begin{equation*}
\alpha= \frac{-\nu'(1)}{2(h^2+r_0^2)} +\tilde\alpha,
\end{equation*}
where $\tilde\alpha$ takes the form 
\[\tilde\alpha=\frac{1}{|\log\ve|}\left[\frac{\int_{B_{\rho}}-\left[\frac{3r_0h^2+r_0^2}{2h(h^2+r_0^2)^{\frac 3 2}}\Gamma_{+}^{\gamma}+\gamma\Gamma^{\gamma-1}_{+} \left(c_1\Gamma +\frac{h}{\sqrt{h^2+r_0^2}}\partial_{y_1}H_{2\ve}(P)\right)\right]y_1Z_1}{\int_{B_{\rho}}\gamma\Gamma^{\gamma-1}_{+}y_1Z_1}\right](1+o(1)),\]
with $o(1) \to 0$ as $\ve \to 0.$
This concludes the proof of Theorem \ref{maintheorem}.
\section{Proof of Theorem \ref{theorem 2}}\label{multihelices}
In this section, we provide some ideas concerning the proof of Theorem \ref{theorem 2}. We recall that the equation we need to solve reads 
\begin{equation}\label{eqfinalsec}
-\nabla_{x}\cdot(K\nabla_x\Psi)=f\left(\Psi-\frac{\alpha}{2}|\log\ve||x|^2\right) \quad \mbox{in} \,\, \R^2, \quad f(s)=\frac{1}{\ve^2}\left(s+\nu'(1)|\log\ve|\right)^{\gamma}_{+},
\end{equation}
where $\gamma>3,\,s_{+}=\max(0,s),$ and $K(x)$ is defined in \eqref{operL}.

In polar coordinates $x=re^{i\theta}, r=|x|,$ and considering the standard basis $e_r=(\cos\theta,\sin\theta), \, e_{\theta}=(-\sin\theta,\cos\theta)$, we have that $K e_r=\frac{h^2}{h^2+r^2}e_{r}$ and $K e_{\theta}=e_{\theta},$ thus equation \eqref{eqfinalsec} can be equivalently written as 
\begin{equation}\label{finaleqpolar}
-\frac{h^2}{r}\partial_{r}\left(\frac{r}{h^2+r^2}\partial_r \Psi\right)-\frac{1}{r^2}\partial_{\theta\theta}\Psi=f\left(\Psi-\frac{\alpha}{2}|\log\ve|r^2\right) \quad \mbox{in} \,\, (r,\theta)\in(0,\infty)\times[0,2\pi).
\end{equation}
For any integer $k\geq 2$, it is then immediate that equation \eqref{finaleqpolar} is invariant under the rotation $\theta\mapsto \theta + \frac{2\pi}{k}$ and even symmetry $\theta \mapsto -\theta,$ due to the spherical symmetry of \eqref{eqfinalsec}.

To obtain the solution predicted by Theorem \ref{theorem 2}, we set  $z=|z|e^{i\theta}$ and work in the function space of dihedral symmetry
\begin{equation*}
\hat{\mathcal{X}}\defeq \left\{\Psi \in L^{\infty}_{\text{loc}}\left(\R^2\right): \Psi\left(ze^{\frac{i2\pi}{k}}\right)=\Psi(z), \,\, \Psi(z)=\Psi(\bar{z}), \,\, z=(z_1,z_2)\in\R^2\right\}.
\end{equation*}
We then introduce the profile \[\tilde{H}_{1\ve}(z)=\hat{\Gamma}\left(\frac{z}{\ve}\right)\left(1+c_1z_1+c_2|z|^2\right)+\frac{r_0^3}{2h(h^2+r_0^2)^{\frac 3 2}}H_{1\ve}(z),\]
with $\hat{\Gamma}\left(\frac{z}{\ve}\right), \, c_1, \, c_2$ and $H_{1\ve}(z)$ given in Proposition \ref{propapprox}, and introduce the approximate solution 
\[\Psi_{\alpha}(x)=\frac{\alpha}{2}|\log\ve|r_0^2 +\sum_{j=1}^{k}\eta_{\delta}\left(x-r_0e^{\frac{i2\pi(j-1)}{k}}\right)\tilde{H}_{1\ve}\left(ze^{\frac{i2\pi(j-1)}{k}}\right) +H_{2\ve}(x) \in \hat{\mathcal{X}},\]
where the cut-off function $\eta_{\delta}$ is defined in \eqref{definitioneta}.

The remainder of the proof of Theorem \ref{theorem 2} is identical to that of Theorem \ref{maintheorem}, hence we omit the details.

\bibliographystyle{plain}
\bibliography{references}
\end{document}